\numberwithin{equation}{section}
\numberwithin{figure}{section}
\theoremstyle{plain}
\newtheorem{thm}{\protect\theoremname}
\theoremstyle{plain}
\newtheorem{prop}[thm]{\protect\propositionname}
\theoremstyle{plain}
\newtheorem{lem}[thm]{\protect\lemmaname}
\theoremstyle{plain}
\newtheorem{cor}[thm]{\protect\corollaryname}
\providecommand{\corollaryname}{Corollary}
\providecommand{\lemmaname}{Lemma}
\providecommand{\propositionname}{Proposition}
\providecommand{\theoremname}{Theorem}
\begin{document}
\global\long\def\ZZ{\mathbb{Z}}%
 
\global\long\def\CC{\mathbb{C}}%
\global\long\def\HH{\mathbb{H}}%
\global\long\def\PP{\mathbb{P}}%
\global\long\def\QQ{\mathbb{Q}}%
\global\long\def\RR{\mathbb{R}}%
 
\global\long\def\NN{\mathbb{N}}%
 
\global\long\def\Hyp{\mathbb{H}^{2}}%

\global\long\def\Gne{\textbf{\ensuremath{\mathrm{{\mathbf{G}}}}}}%
\global\long\def\GLne{{\mathbf{GL}}}%
\global\long\def\SLne{{\mathbf{SL}}}%
\global\long\def\SOne{{\mathbf{SO}}}%

\global\long\def\spec{\text{Sp }}%

\global\long\def\Ac{\mathcal{A}}%
 
\global\long\def\Bc{\mathcal{B}}%
 
\global\long\def\Ec{\mathcal{E}}%
\global\long\def\Fc{\mathcal{F}}%
\global\long\def\Hc{\mathcal{H}}%
\global\long\def\Lc{\mathcal{L}}%
 
\global\long\def\Mc{\mathcal{M}}%
 
\global\long\def\Pc{\mathcal{P}}%
 
\global\long\def\Sc{\mathcal{S}}%
\global\long\def\Zc{\mathcal{Z}}%

\global\long\def\SL{\text{\ensuremath{\mathrm{SL}}}}%
\global\long\def\PSL{\mathrm{PSL}}%
\global\long\def\GL{\text{\ensuremath{\mathrm{GL}}}}%
\global\long\def\O{\text{\ensuremath{\mathrm{O}}}}%
\global\long\def\SO{\text{\ensuremath{\mathrm{SO}}}}%
\global\long\def\PSO{\mathrm{PSO}}%
 
\global\long\def\PO{\mathrm{PO}}%

\global\long\def\Scal{\mathscr{S}}%

\global\long\def\ne#1{\textbf{#1}}%

\global\long\def\eps{\varepsilon}%

\global\long\def\inv{^{-1}}%
 
\global\long\def\tr{\text{tr\,}}%
\global\long\def\im{\text{Im }}%
 
\global\long\def\re{\text{Re }}%
 
\global\long\def\cl{\text{cl\,}}%

\global\long\def\gexp{\delta_{\Gamma}}%
 
\global\long\def\llG{\ll_{\Gamma}}%
 
\global\long\def\ggG{\gg_{\Gamma}}%
 
\global\long\def\asymG{\asymp_{\Gamma}}%

\global\long\def\Words{\mathcal{W}}%
\global\long\def\neWords{\mathcal{\mathcal{W}^{\circ}}}%
 
\global\long\def\mirror#1{\widetilde{#1}}%
\global\long\def\Start#1{S(#1)}%
 
\global\long\def\End#1{E(#1)}%
 
\global\long\def\wlength#1{|#1|}%

\global\long\def\Pp{\Pc_{p}^{*}}%
\global\long\def\Pn{\Pc_{n}^{*}}%

\global\long\def\len#1{|I_{#1}|}%
 
\global\long\def\norm#1{\left|\left|#1\right|\right|}%
 
\global\long\def\trNorm#1{||#1||_{\text{tr}}}%
 
\global\long\def\HSnorm#1{||#1||_{{\scriptscriptstyle HS}}}%

\global\long\def\scal#1#2{\left\langle #1,#2\right\rangle }%

\title{Explicit spectral gap for Schottky subgroups of $\SL(2,\ZZ)$}
\author{Irving Calderón}
\author{Michael Magee}
\begin{abstract}
Let $\Gamma$ be a Schottky subgroup of $\SL(2,\ZZ)$. We establish
a uniform and explicit lower bound of the second eigenvalue of the
Laplace-Beltrami operator of congruence coverings of the hyperbolic
surface $\Gamma\backslash\Hyp$ provided the limit set of $\Gamma$
is thick enough. 
\end{abstract}

\maketitle

\section{Introduction}

\subsection{Spectral gaps for hyperbolic surfaces and main result}

The Laplace-Beltrami operator $\Delta_{S}$ of a hyperbolic surface
$S=\Gamma\backslash\Hyp$ encodes important geometric and dynamical
features of $S$. The spectral theory of $\Delta_{S}$ also has deep
connections with number theory when $\Gamma$ is an arithmetic subgroup
of $\SL(2,\RR)$. We are interested in the general problem of giving
lower bounds of the second smallest eigenvalue $\lambda_{1}(S)$ of
$\Delta_{S}$ when $S$ varies in a family $\Fc$ of hyperbolic surfaces.
We call these \emph{spectral gap results}. In this work, $\mathcal{F}$
will always be a family of finite covers of a fixed surface $S$.
Not all such families have a spectral gap. For example, suppose that
$S=\Gamma\backslash\Hyp$ for some $\Gamma$ admitting a surjective
homomorphism $\psi:\Gamma\to\ZZ$. The $\lambda_{1}$ of the finite
cover $S_{(n)}:=\psi\inv(n\ZZ)\backslash\Hyp$ of $S$ tends to $0$
as $n\to\infty$. It is believed that this kind of examples are rare,
and that most finite coverings of a hyperbolic surface have a strong
spectral gap. For instance, this was recently established in \cite{magee_explicit_2020}
and \cite{magee-naud_extension_2021} for random coverings of Schottky
surfaces.

Many $\Fc$ of arithmetic flavor do have a spectral gap, even a uniform
one. By an arithmetic $\Fc$ we mean the following: suppose $\Gamma$
is a subgroup of $\SL(2,\ZZ)$, let $\psi_{n}$ be the projection
$\SL(2,\ZZ)\to\SL(2,\ZZ/n\ZZ)$, and let $G_{n}$ be a subgroup of
$\SL(2,\ZZ/n\ZZ)$. We take $\Fc$ as the hyperbolic surfaces associated
to $\psi_{n}\inv(G_{n})\cap\Gamma$. We will focus on the important
case where all the $G_{n}$'s are $0$. We call $\Gamma\cap\psi_{n}\inv(0)$
the \emph{$n$-th congruence subgroup} of $\Gamma$---which we denote
$\Gamma_{n}$---. Similarly, $S_{n}:=\Gamma_{n}\backslash\Hyp$ is
the \emph{$n$-th congruence covering} of $S=\Gamma\backslash\Hyp$.
More generally, any $\Gamma$ contained in an arithmetic subgroup
of $\SL(2,\RR)$ has congruence subgroups. 

Our main result is a uniform spectral gap for congruence coverings
of $\Gamma\backslash\Hyp$, for any Schottky subgroup $\Gamma$ of
$\SL(2,\ZZ)$ with big enough growth exponent---see Subsection \ref{subsec:Basic_definitions}
for the base definitions and facts of hyperbolic surfaces and their
Laplace-Beltrami operators, and Subsection \ref{subsec:Fuchsian-Schottky-groups}
for the definition of Schottky group---. Before stating it, let us
give context on uniform spectral gaps for congruence coverings of
hyperbolic surfaces. In the seminal paper \cite{selberg_estimation_1965},
Selberg was interested in the family $\Fc$ of congruence coverings
of $\SL(2,\ZZ)\backslash\Hyp$\footnote{The quotient $\SL(2,\ZZ)\backslash\Hyp$ is actually not a surface
because $\PSL(2,\ZZ)$ has torsion, but all the other elements of
$\Fc$ are surfaces.} . His motivation came from number theory and, more precisely, modular
forms. Selberg showed that $\lambda_{1}(S)\geq\frac{3}{16}$ for any
$S\in\Fc$. This result is commonly called Selberg's $\frac{3}{16}$-Theorem.
For cocompact arithmetic subgroups\footnote{See \cite[p. 210]{sarnak_bounds_1991} for the precise definition.}
$\Gamma$ of $\SL(2,\RR)$, Sarnak and Xue show in \cite[Corollary 2]{sarnak_bounds_1991}
that any prime congruence covering $S'$ of $S=\Gamma\backslash\Hyp$
of high enough degree verifies

\[
\lambda_{1}(S')\geq\min\left\{ \frac{5}{36},\lambda_{1}(S)\right\} .
\]

Consider now $S=\Gamma\backslash\Hyp$ with $\Gamma$ a subgroup of
$\SL(2,\ZZ)$. We denote by $m_{\Gamma}(n,J)$ the number of eigenvalues---counted
with multiplicity---of $S_{n}$ in the interval $J$. We will write
$m_{\Gamma}(J)$ instead of $m_{\Gamma}(1,J)$. Assume also that $\Gamma$
is finitely generated so that $\Delta_{S}$ has finitely many eigenvalues
$\lambda_{0}(S)\leq\cdots\leq\lambda_{k}(S)$ in $[0,1/4)$---see
Subsection \ref{subsec:Basic_definitions}---. Bourgain, Gamburd
and Sarnak show in \cite[Theorem 1.1]{bourgain-gamburd-sarnak_generalization_2011}
that the $S_{q}$ with $q$ square-free have a uniform spectral gap
whenever its growth exponent $\gexp$ is $>\frac{1}{2}$, although
they do not give an explicit lower bound of $\lambda_{1}$. Oh and
Winter establish the analogous result when $\delta_{\Gamma}\leq\frac{1}{2}$
in \cite{oh-winter_uniform_2015}. This is extended to arbitrary moduli
in \cite{magee-oh-winter_uniform_2019}. In this case, the Laplace-Beltrami
operator does not have eigenvalues, so the spectral gap is formulated
in the terms of \emph{resonances }of $S_{q}$. These are poles of
the meromorphic continuation of the resolvent $R_{S_{q}}(s)=(\Delta_{S_{q}}-s(1-s)I)\inv$
of $\Delta_{S_{q}}$. 

An explicit spectral gap in this setting is \cite[Main Theorem]{gamburd_spectral_2002},
an earlier result of Gamburd which we restate below. Note that the
price to pay for the explicit gap is a more restrictive lower bound
on $\gexp$. 
\begin{thm}
\label{thm:Gamburds_thm}Consider a hyperbolic surface $S=\Gamma\backslash\Hyp$
with $\Gamma$ a finitely generated subgroup of $\SL(2,\ZZ)$, and
let $J=[0,\frac{5}{36})$. If $\gexp>\frac{5}{6}$, then $m_{\Gamma}(p,J)=m_{\Gamma}(J)$
for any big enough prime number $p$.
\end{thm}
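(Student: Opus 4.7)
The strategy is to combine the transfer-operator machinery for Schottky surfaces with the expansion of Cayley graphs of $\SL(2,\ZZ/p\ZZ)$ that comes out of Selberg's $\frac{3}{16}$-theorem. Coding the action of $\Gamma$ on its limit set by a Bowen--Series Markov map, I would first introduce the Ruelle transfer operator $\Lc_{s}$ on a suitable Banach space of holomorphic functions on the disjoint union of the Schottky disks. By a now-classical correspondence (Patterson, Sullivan, Naud), the eigenvalues of $\Delta_{S}$ in $[0,\frac{1}{4})$ are in bijection, with matching multiplicities, with the real numbers $s\in(\frac{1}{2},1]$ at which $1$ is an eigenvalue of $\Lc_{s}$. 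On the congruence cover, tensoring with the regular representation of $G_{p}:=\SL(2,\ZZ/p\ZZ)$ pulled back through $\psi_{p}$ produces an operator $\Lc_{s,p}$; decomposing the regular representation into irreducibles reduces the theorem to the following assertion: for every non-trivial irreducible representation $\pi$ of $G_{p}$, the twisted operator $\Lc_{s,\pi}$ does not admit $1$ as an eigenvalue for any $s\in[\frac{5}{6},1]$, provided $p$ is sufficiently large. Zeros coming from the trivial $\pi$ account exactly for the old eigenvalues $m_{\Gamma}(J)$.

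To control $\Lc_{s,\pi}$ I would study its iterated traces through the Ruelle fixed-point formula
\[
\tr\,\Lc_{s,\pi}^{\,2n}\;=\;\sum_{w}\frac{e^{-s\,\ell(w)}\,\chi_{\pi}\bigl(\psi_{p}(w)\bigr)}{1-\lambda_{w}^{-1}},
\]
where the sum runs over periodic admissible words $w$ of length $2n$ in the Schottky alphabet, $\ell(w)$ is the length of the associated closed geodesic, and $\lambda_{w}$ is its holonomy derivative. A Cauchy--Schwarz split separates a dynamical factor $\sum_{w}e^{-2s\ell(w)}$, whose growth is governed by topological pressure and is essentially $e^{2n(\gexp-s)}$ up to subexponential corrections, from a representation-theoretic factor $\sum_{w}|\chi_{\pi}(\psi_{p}(w))|^{2}$. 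Character orthogonality on $G_{p}$ turns the latter into a return-probability count for the random walk on the Cayley graph of $G_{p}$ driven by the Schottky generators. Here Selberg's $\frac{3}{16}$-theorem enters: transported to the combinatorial side via Brooks' comparison between $\lambda_{1}(\Gamma(p)\backslash\Hyp)$ and the combinatorial spectral gap of the Schreier graph, it supplies a uniform bound $\rho<1$ on the non-trivial spectrum of this walk, and hence an estimate of the shape
\[
\#\{w:|w|=2n,\ \psi_{p}(w)=1\}\;\lesssim\;\frac{(2k)^{2n}}{|G_{p}|}+(2k\rho)^{2n},
\]
where $2k$ is the size of the Schottky alphabet.

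Inserting these bounds and optimizing over $n\asymp\log p$ yields $\|\Lc_{s,\pi}^{\,2n}\|<1$ for $n$ above a threshold and $p$ large, as soon as $s\geq\frac{5}{6}$ and $\gexp>\frac{5}{6}$; this immediately gives $m_{\Gamma}(p,J)=m_{\Gamma}(J)$. The main obstacle, and the place where all the numerical constants must be tracked, is making the balance quantitatively tight. On the dynamical side one needs sharp weighted counts of periodic admissible words (of Prime Geodesic Theorem quality rather than coming merely from gross pressure bounds), and on the group-theoretic side one must verify that only the principal congruence Laplacian enters the error analysis, so that Selberg's explicit $\frac{3}{16}$---and not just some softer abstract expansion---can genuinely be invoked. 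Once those two ingredients are aligned, the threshold $\gexp>\frac{5}{6}$ is forced by the identity $\frac{5}{6}\cdot\frac{1}{6}=\frac{5}{36}$, which is precisely how $s=\frac{5}{6}$ parametrises the right endpoint of the target interval $J=[0,\frac{5}{36})$.
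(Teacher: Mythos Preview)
The paper does not prove Theorem~\ref{thm:Gamburds_thm}; it merely restates Gamburd's Main Theorem from \cite{gamburd_spectral_2002} as context for the new result. There is therefore no ``paper's own proof'' to compare against. That said, the paper does indicate what Gamburd's actual argument is: in the paragraph discussing the strategy of proof it says explicitly that Gamburd (like Sarnak--Xue) \emph{relies on Selberg's Trace Formula} to obtain the multiplicity upper bound, combined with a lattice-point count in $\SL(2,\ZZ)_{p}$ and the representation-theoretic lower bound for new eigenvalues. Your proposal is a genuinely different route, closer in spirit to the transfer-operator methods of the present paper (and of Bourgain--Gamburd--Sarnak, Oh--Winter) than to Gamburd's trace-formula approach.

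There are real gaps in your sketch, however. First, Theorem~\ref{thm:Gamburds_thm} is stated for \emph{any} finitely generated $\Gamma\leq\SL(2,\ZZ)$, including groups with parabolics, whereas your entire setup (Schottky disks, Bowen--Series coding on a Markov partition with holomorphic contraction) presupposes a Schottky structure. Second, the step where you import Selberg's $\tfrac{3}{16}$ via Brooks' comparison is not straightforward: Brooks relates $\lambda_{1}$ of a covering to the combinatorial gap of the Schreier graph \emph{with respect to generators of the covering group}, so Selberg's theorem controls Cayley graphs of $\SL(2,\ZZ/p\ZZ)$ with respect to generators of $\SL(2,\ZZ)$, not with respect to your Schottky generators of the thin group $\Gamma$. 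Transferring the gap to an arbitrary generating set is possible but costs constants, and your argument needs those constants to be explicit and sharp enough to hit the $\tfrac{5}{6}$ threshold. Third, the Cauchy--Schwarz split you describe typically loses a square root and will not recover the precise exponent; Gamburd's trace-formula argument avoids this loss by directly comparing the spectral side (governed by $s$) with the lattice-point count in $\SL(2,\ZZ)_{p}$ (governed by the exponent $2$ and the density $p^{-3}$), which is where the numerology $s>\tfrac{5}{6}$, $\gexp>\tfrac{5}{6}$ actually comes from. Your final sentence explains only why $s=\tfrac{5}{6}$ parametrises the endpoint of $J$, not why the same number appears as the threshold on $\gexp$.
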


Our main result improves Gamburd's theorem---although for a more
restricted class of surfaces---in two ways: we relax the condition
on $\gexp$ and we get a bigger interval without new eigenvalues.
In the statement we use the notation $t_{\Gamma}=\frac{\gexp}{6}+\frac{2}{3}$
and $J_{\beta}=[0,\beta(1-\beta)]$.
\begin{thm}
\label{thm:MainThm}Consider a hyperbolic surface $S=\Gamma\backslash\Hyp$,
with $\Gamma$ a Schottky subgroup of $\SL(2,\ZZ)$. If $\gexp>\frac{4}{5}$,
then for any $\beta\in\left(t_{\Gamma},\gexp\right)$ there is a constant
$C(\Gamma,\beta)$ with the next property: for any integer $n\geq5$
such that all its prime divisors are $\geq C(\Gamma,\beta)$ we have
\[
m_{\Gamma}(n,J_{\beta})=m_{\Gamma}(J_{\beta}).
\]
\end{thm}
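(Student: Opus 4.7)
The plan is to translate the conclusion into a uniform spectral-radius bound for a twisted Ruelle transfer operator attached to $S_{n}$, following the paradigm of Bourgain--Gamburd--Sarnak and Gamburd: the two main inputs will be quantitative estimates on the hyperbolic side and a uniform $L^{2}$-flattening estimate for the image of $\Gamma$ in $\SL(2,\ZZ/n\ZZ)$ on the arithmetic side. First I would encode the limit set of $\Gamma$ by the Bowen--Series-type iterated function system of its generators and introduce the Ruelle transfer operator $\Lc_{s}$, together with its congruence twist $\Lc_{s,n}$ acting on holomorphic sections with values in $\CC[G_{n}]$, where $G_{n}:=\psi_{n}(\Gamma)$. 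A standard Patterson--Sullivan/Ruelle dictionary identifies eigenvalues $s(1-s)\in[0,1/4)$ of $\Delta_{S_{n}}$ with values $s\in[\gexp,1)$ at which $1$ lies in the spectrum of $\Lc_{s,n}$; decomposing the permutation representation as $\CC[G_{n}]=\CC\oplus V_{n}^{\circ}$, the ``old'' eigenvalues come from the trivial summand and reproduce $m_{\Gamma}(J_{\beta})$, while the ``new'' eigenvalues of $S_{n}$ in $J_{\beta}$ correspond exactly to those $s\in[\beta,1)$ at which the restriction $\Lc_{s,n}^{\circ}$ of $\Lc_{s,n}$ to $V_{n}^{\circ}$-valued sections admits $1$ as an eigenvalue. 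The theorem therefore reduces to proving
\[
\rho(\Lc_{s,n}^{\circ})<1\qquad\text{for every }s\in[\beta,1],
\]
uniformly in $n$ with all prime divisors at least $C(\Gamma,\beta)$.

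To secure this bound I would iterate $\Lc_{s,n}^{\circ}$ to depth $N\asymp\log|G_{n}|$ and estimate its Hilbert--Schmidt norm. The classical transfer-operator identities combined with a pressure linearisation give
\[
\HSnorm{\Lc_{s,n}^{N}}^{2}\asymG |G_{n}|\sum_{|w|=N}e^{-2(\re s)L(w)}\ll |G_{n}|\exp\bigl(N(\gexp-2\re s)+o(N)\bigr),
\]
where $L(w)$ is the geometric length of the Schottky word $w$. On the arithmetic side, an $L^{2}$-flattening statement for the word distribution of $\Gamma$ on $G_{n}$---available in the squarefree prime case from Bourgain--Gamburd and more generally through Bourgain--Varj\'u and Salehi-Golsefidy--Varj\'u---says that, once $N$ exceeds a constant multiple of $\log|G_{n}|$, projecting onto $V_{n}^{\circ}$ saves a factor of $|G_{n}|^{1-\varepsilon}$. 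Combining the two estimates and taking $N$-th roots gives
\[
\rho(\Lc_{s,n}^{\circ})\leq\exp\!\left(\frac{\gexp-2\re s}{2}+\frac{\varepsilon-1}{2}\cdot\frac{\log|G_{n}|}{N}+o(1)\right),
\]
so that, after optimising the ratio $\log|G_{n}|/N$ against the size of the flattening saving, the right-hand side is strictly below $1$ for every $s\in[\beta,1]$ exactly when $\beta>\gexp/6+2/3=t_{\Gamma}$. The restriction $\gexp>\frac{4}{5}$ in the hypothesis is precisely the condition $t_{\Gamma}<\gexp$ needed to make the range of admissible $\beta$ non-empty.

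The main obstacle is the arithmetic flattening step, in the explicit and uniform form the argument demands: one needs that words of length $\asymp\log|G_{n}|$ in the Schottky generators equidistribute on $G_{n}$ with an $L^{2}$-error of order $|G_{n}|^{-(1-\varepsilon)/2}$, with \emph{fully explicit} dependence on $\Gamma$ and on the smallest prime divisor of $n$. This rests on a non-concentration/escape-from-subvarieties estimate coming from the Schottky freeness of $\Gamma$, combined with a Helfgott/Bourgain-type product theorem in $\SL(2,\ZZ/n\ZZ)$, and the delicate balancing of the depth $N$ (too small, no expansion gain; too large, the pressure term dominates) is what ultimately produces the exponent $1/6$ visible in $t_{\Gamma}$. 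Once this is in place, the implication ``$\rho(\Lc_{s,n}^{\circ})<1$ for all $s\in[\beta,1]$'' $\Rightarrow$ ``$m_{\Gamma}(n,J_{\beta})=m_{\Gamma}(J_{\beta})$'' is a routine application of the transfer-operator/Laplacian dictionary together with the meromorphic continuation of the resolvent $R_{S_{n}}(s)$.
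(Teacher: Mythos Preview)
Your outline follows the Bourgain--Gamburd--Sarnak/Oh--Winter paradigm: bound the spectral radius of the twisted transfer operator on the complement $V_n^\circ$ of the trivial representation, using $L^2$-flattening/expansion in $\SL(2,\ZZ/n\ZZ)$ as the arithmetic input. The paper takes a different route, the Sarnak--Xue multiplicity method. It proves an \emph{upper bound} $m_\Gamma(n,J_\beta)\leq \mathtt{D}_{\Gamma,\beta}\,n^\xi$ with $\xi<1$ by counting zeros of the refined zeta function $\zeta_{\tau,n}(s)=\det(I-\Lc_{\Bc(\tau),s,\sigma_n}^2)$ via Jensen's formula (here $\sigma_n$ is the \emph{full} regular representation, not its restriction to $V_n^\circ$), and a \emph{lower bound} $>n/3^{\omega(n)}$ for the multiplicity of any new eigenvalue coming from the minimal dimension of a faithful irreducible representation of $\SL(2,\ZZ/n\ZZ)$; these two are incompatible once all prime divisors of $n$ are large. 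Crucially, the arithmetic input in the paper is not expansion at all but an elementary lattice-point count $N_n(R)\ll_\eps R^\eps(R^2/n^3+R/n+1)$ in the principal congruence subgroup $\SL(2,\ZZ)_n$, proved with nothing beyond the divisor bound.

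The genuine gap in your proposal is the arithmetic step. The flattening results you invoke (Bourgain--Gamburd, Bourgain--Varj\'u, Salehi-Golsefidy--Varj\'u, Helfgott-type product theorems) are intrinsically non-effective: they produce \emph{some} $\eps>0$ but no specific value, and certainly not one that would pin down the explicit threshold $t_\Gamma=\gexp/6+2/3$. Your assertion that ``after optimising the ratio $\log|G_n|/N$ against the size of the flattening saving, the right-hand side is strictly below $1$ \dots exactly when $\beta>t_\Gamma$'' is therefore unsubstantiated: that threshold does not emerge from expansion machinery but from the concrete exponents $2$ and $3$ in the elementary count $N_n(R)\ll R^2/n^3$ together with $|G_n|\asymp n^3$, optimised over the scale $\tau=n^{-\alpha}$ (this is the content of the paper's Lemma~\ref{lem:ChoiceAlpha}). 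If you replace the BGS flattening by the return/collision count coming from this lattice bound---which is exactly the paper's estimate on $\#\Pc_n(\tau)$---your HS-norm computation becomes workable; but then you are carrying out the paper's argument, and the Sarnak--Xue packaging (Jensen on the full $\sigma_n$, then the representation-theoretic lower bound) is cleaner than a direct spectral-radius argument on $V_n^\circ$, since the latter introduces signed trace terms $\tr\sigma_n^\circ(\gamma)=[\Gamma:\Gamma_n]\mathbf{1}_{\gamma\in\Gamma_n}-1$ that spoil the positivity one wants for a clean HS bound.
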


Note that we deal with general moduli $n$ in contrast to many previous
works.

\subsection{Strategy of proof}

To prove Theorem \ref{thm:MainThm} we will show on the one hand that---for
any big enough $n$---$m_{\Gamma}(n,J_{\beta})$ is $\leq\mathtt{D}n^{1-\eps}$
for a constant $\mathtt{D}$ depending only on $\Gamma$ and $\beta$,
and some $\eps>0$. On the other hand, $m_{\Gamma}(n,J_{\beta})\gg n/3^{\omega(n)}$
when $S_{n}$ has a new eigenvalue in $J_{\beta}$. The two inequalities
cannot hold at the same time for big $n$, so Theorem \ref{thm:MainThm}
follows. This strategy to prove uniform spectral gaps is due to Sarnak
and Xue \cite{sarnak_bounds_1991}\footnote{It seems to be common knowledge that the strategy is based on an idea
of Kazdhan.}. 

The lower bound of $m_{\Gamma}(n,J_{\beta})$ is standard and relies
on the representation theory of the finite group $\Gamma/\Gamma_{n}$,
which is isomorphic to $\SL(2,\ZZ/n\ZZ)$ for any $n$ without prime
divisors in a finite set depending on $\Gamma$. The novelty of our
work is the following upper bound of $m_{\Gamma}(n,J_{\beta})$. Although
our methods work only for $\beta>t_{\Gamma}$, we think the bound
should also hold for smaller $\beta$. 
\global\long\def\consMultiplicityp{\mathtt{C}_{\Gamma}}%
 
\global\long\def\consMultiplicityCoeff#1{\mathtt{D}_{\Gamma,#1}}%
 
\global\long\def\consMultiplicityExponent#1{\xi(\gexp,#1)}%

\begin{prop}
\label{prop:Multiplicity_bound}Let $\Gamma$ be a Schottky subgroup
of $\SL(2,\ZZ)$ with $\gexp>\frac{4}{5}$ and consider $\beta\in(t_{\Gamma},\gexp)$.
There are constants $\consMultiplicityp>0,\consMultiplicityCoeff{\beta}>0$
and $\xi=\consMultiplicityExponent{\beta}\in(0,1)$ with the next
property: For any integer $n>\consMultiplicityp$ we have 
\[
m_{\Gamma}(n,J_{\beta})\leq\consMultiplicityCoeff{\beta}n^{\xi}.
\]
\end{prop}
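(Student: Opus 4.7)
The plan is to set up a twisted transfer operator and use Patterson--Sullivan theory to convert the eigenvalue count into counting zeros of its Fredholm determinant. Let $\Lc_{s,n}$ be the transfer operator associated to a Bowen--Series coding of $\Gamma$, twisted by the regular representation of the finite quotient $G_{n}:=\Gamma/\Gamma_{n}$; for $n$ coprime to a finite set of primes depending on $\Gamma$, strong approximation identifies $G_{n}$ with $\SL(2,\ZZ/n\ZZ)$. By standard arguments (Patterson--Sullivan, or Bunke--Olbrich type factorisations of the Selberg zeta function), each $L^{2}$-eigenvalue of $\Delta_{S_{n}}$ in $J_{\beta}$ corresponds to an $s\in[\beta,\gexp]$ at which the Fredholm determinant $\det(1-\Lc_{s,n})$ vanishes, so $m_{\Gamma}(n,J_{\beta})$ is bounded by the total number of such zeros, counted with multiplicity.

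I would then convert this spectral count into a Hilbert--Schmidt estimate. The elementary inequality $\#\{j:|\lambda_{j}(\Lc_{s,n})|\geq 1-\eta\}\leq(1-\eta)^{-2N}\HSnorm{\Lc_{s,n}^{N}}^{2}$, combined with a Jensen-type contour argument on the $s$-variable along $[\beta,\gexp]$, gives
\[
m_{\Gamma}(n,J_{\beta})\ \ll\ (1-\eta)^{-2N}\sup_{s\in[\beta,\gexp]}\HSnorm{\Lc_{s,n}^{N}}^{2},
\]
and expanding in the symbolic dynamics one has
\[
\HSnorm{\Lc_{s,n}^{N}}^{2}\ \asymp\ \sum_{w}|w'(x_{w})|^{2s},
\]
where the sum is over closed admissible words $w$ of length $N$ such that $\gamma_{w}\equiv e\!\pmod{n}$, $\gamma_{w}\in\Gamma$ being the Schottky product of the letters of $w$ and $x_{w}$ the attracting fixed point of its branch.

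The decisive step is to exploit the congruence restriction $\gamma_{w}\equiv e\!\pmod{n}$ to gain a factor of order $|G_{n}|\inv\asymp n^{-3}$. Here I would invoke the super-strong approximation theorem for $\SL(2,\ZZ/n\ZZ)$---Bourgain--Gamburd--Sarnak for prime moduli, extended to arbitrary moduli via Salehi Golsefidy--Varj\'u---to produce an $\ell^{2}$-flattening estimate on $\ell^{2}(G_{n})$ for the weighted convolution operator induced by the Schottky generators. Combined with thermodynamic-formalism bounds on the topological pressure $P$ of the shift, after $N\asymp\log n$ iterations this yields
\[
\HSnorm{\Lc_{s,n}^{N}}^{2}\ \lesssim\ \frac{e^{NP(2s)}}{|G_{n}|}\,n^{o(1)}.
\]
Substituting into the previous display and optimising over $N$ and $\eta$ produces $m_{\Gamma}(n,J_{\beta})\ll n^{\xi}$, where $\xi<1$ precisely when the near-critical asymptotics of $P$ force the ratio $P(2\beta)/\log|G_{n}|$ to be sufficiently negative; unpacking the constants yields the explicit threshold $\beta>\gexp/6+2/3=t_{\Gamma}$, which in turn forces $\gexp>\tfrac{4}{5}$ for the admissible interval to be non-empty.

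The hardest part will be the quantitative $\ell^{2}$-flattening step. Published versions of super-strong approximation for arbitrary moduli are mostly qualitative, whereas here one needs an explicit and uniform saving that survives the non-uniform archimedean weights $|w'(x_{w})|^{2s}$ and is valid for every $n>\consMultiplicityp$ whose prime divisors are all sufficiently large. Producing this effective flattening, and carefully tracking the dependence of the constants on $\beta$ through the transfer-operator norms near the critical line $\re(s)=\gexp$, is where the bulk of the technical work will lie.
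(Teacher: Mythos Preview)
Your overall architecture---twisted transfer operators, Fredholm determinants, Jensen-type zero counting, and control via Hilbert--Schmidt norms---matches the paper's. The decisive difference is the arithmetic input. The paper does \emph{not} use super-strong approximation or any expander machinery. Instead, the saving comes from an entirely elementary lattice-point count in the congruence subgroup $\SL(2,\ZZ)_{n}$ (Lemma~\ref{lem:countingSL2Z}): one counts matrices $\gamma\equiv I\pmod n$ with off-diagonal entries nonzero and $\norm{\gamma}\leq R$ directly via divisor bounds, obtaining roughly $R^{2}/n^{3}+R/n+1$ up to $R^{\eps}$. The Hilbert--Schmidt norm squared (Lemma~\ref{lem:HS_norm_formula}) is a sum over \emph{pairs} $(w_{1},w_{2})$ of words with matching first and last letters, weighted by $\tr\sigma_{n}(\gamma_{w_{2}}\gamma_{w_{1}}^{-1})=[\Gamma:\Gamma_{n}]\cdot\mathbf{1}_{\gamma_{w_{1}}\gamma_{w_{2}}^{-1}\in\Gamma_{n}}$---not a sum over single closed words as you write. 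One then feeds the norm estimate $\norm{\gamma_{w_{1}}\gamma_{w_{2}}^{-1}}\asymG\tau^{-1}$ into the elementary matrix count to bound $\#\Pc_{n}(\tau)$. The paper uses a refined partition $\Bc(\tau)$ in the style of Bourgain--Dyatlov rather than iterating $\Lc_{s}^{N}$; the choice $\tau=n^{-\alpha}$ with $\alpha\in(2,5)$ plays the role of your $N\asymp\log n$.

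The gap in your proposal is precisely the one you identify at the end, and it is fatal for the stated goal. The threshold $t_{\Gamma}=\gexp/6+2/3$ is not a soft consequence of some flattening rate: it is the output of a concrete optimisation (Lemma~\ref{lem:ChoiceAlpha}) of polynomial inequalities whose shape comes directly from the three terms $R^{2}/n^{3}$, $R/n$, $1$ in the elementary count, together with the diagonal contribution $\tau^{-\gexp}$ and the index bound $[\Gamma:\Gamma_{n}]\leq n^{3}$. Your claim that ``unpacking the constants yields the explicit threshold $\beta>\gexp/6+2/3$'' is not supported by the sketch---qualitative super-strong approximation gives no mechanism to produce this specific number, and indeed part of the point of the paper is to obtain an \emph{explicit} gap from which expansion can be deduced (cf.\ the discussion around Theorem~\ref{thm:Affine_sieve}), so invoking expanders here runs the argument backwards.
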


Let us close by highlighting the key points of our proof of Proposition
\ref{prop:Multiplicity_bound}. Unlike Sarnak-Xue \cite{sarnak_bounds_1991}
and Gamburd \cite{gamburd_spectral_2002}, who rely on Selberg's Trace
Formula to get an upper bound of $m_{\Gamma}(p,J_{\beta})$, we exploit
the fact that the eigenvalues of a Schottky surface $S$ are zeros
of certain dynamical zeta functions $\zeta$---which are entire functions
on $\CC$---attached to $S$. We bound thus the number of eigenvalues
of $S$ in an interval by the number of zeros of a convenient $\zeta$
in some domain of $\CC$. 

As explained in Subsection \ref{subsec:Fuchsian-Schottky-groups},
any Schottky subgroup $\Gamma$ of $\SL(2,\RR)$ comes with a finite
generating set $\mathscr{G}$ and a distinguished fundamental domain
$\Fc$ in $\Hyp$. Intuitively, the zeta functions of $S=\Gamma\backslash\Hyp$
come from nonbacktracking walks in $\Hyp$ starting form $\Fc$. The
classical case is the walk $\mathscr{W}_{1}$ with steps of length
one: a path is an infinite sequence $(x_{0},x_{1},\ldots)$ starting
from $\Fc$, such that $x_{n+1}=\gamma x_{n}$ for some $\gamma\in\mathscr{G}$,
and $x_{n+1}\neq x_{n-1}$. Other zeta functions are obtained by walking
faster, for example with steps of length $m$, or length $m_{i}$
when moving in a certain ``direction'' $C_{i}$. The zeta functions
$\zeta_{\tau,n}$ we use to prove Proposition \ref{prop:Multiplicity_bound}---introduced
by Bourgain and Dyatlov in \cite{bourgain_fourier_2017}---come from
walks with steps of \emph{length at infinity $\tau$}. The key point
is choosing the $\tau$ giving the best upper bound for $m_{\Gamma}(n,J_{\beta})$.

\subsection{Arithmetic applications of spectral gaps}

Here we explain how our main result can improve some arithmetic results
obtained by the affine linear sieve of Bourgain, Gamburd and Sarnak
\cite{bourgain-gamburd-sarnak_affine_2010}. First we recall the general
problem addressed by the affine linear sieve and then we present a
concrete example.

Consider a finitely generated subgroup $\Gamma$ of $\GL(d,\ZZ)$,
a rational representation $\rho:\GLne(d)\to\GLne(m)$, the $\Gamma$-orbit
$\mathcal{O}=v_{0}\rho(\Gamma)$ of some $v_{0}\in\QQ^{m}$ and a
polynomial $P(x)\in\QQ[x_{1},\ldots,x_{m}]$ taking integral values
at $\mathcal{O}$. Loosely speaking, the affine linear sieve gives
conditions on $\Gamma,\mathcal{O}$ and $P$ which guarantee that
$P$ takes values with few prime divisors in a big subset of $\mathcal{O}$.
Dirichlet's Theorem on primes in arithmetic progressions---there
are infinitely many prime numbers of the form $a+nb$ for any relatively
prime, nonzero integers $a,b$---can be formulated in this way taking

\[
\Gamma=\left\langle \begin{pmatrix}1 & 0\\
b & 1
\end{pmatrix}\right\rangle ,\mathcal{O}=(a,1)\Gamma,\quad\text{and}\quad P(x_{1},x_{2})=x_{1}.
\]
We need some definitions to precise what we mean by \emph{big subset}
of $\mathcal{O}$ in general. An \emph{$R$-almost prime} is an integer
with at most $R$ prime divisors, counted with multiplicity. For example,
$-12$ is a 3-almost prime. The subsets 

\[
\mathcal{O}_{P}(R)=\left\{ x\in\mathcal{O}\mid P(x)\text{ is an }R\text{-almost prime}\right\} 
\]
of $\mathcal{O}$ grow with $R$. We say that $(\mathcal{O},P)$ \emph{saturates
}if $\mathcal{O}_{P}(R)$ is Zariski-dense in $\mathcal{O}$ for some
$R$. In that case, the minimal such $R$ is the \emph{saturation
number }of $(\mathcal{O},P)$, and is denoted $R(\mathcal{O},P)$.
The problem we consider is how to show that $(\mathcal{O},P)$ saturates.
Better yet, can we give an upper bound of $R(\mathcal{O},P)$? The
affine linear sieve, and more precisely \cite[Theorem 1.1]{bourgain-gamburd-sarnak_generalization_2011},
answers these questions for various $(\mathcal{O},P)$. This result
treats the case where $\rho:\GLne(d)\to\GLne(d^{2})$ is the representation
of $\GL(d)$ on the space of $d\times d$ matrices by right multiplication,
$\mathcal{O}$ is the $\Gamma$-orbit of $I_{d}$ and $P$ is a nontrivial
polynomial with rational coefficients. In simple terms, it says that
this $(\mathcal{O},P)$ saturates if the Zariski closure of $\Gamma$
is big enough and $\Gamma$ has the \emph{square-free expansion hypothesis.
}This last condition means that $\Gamma$ has a finite, symmetric
generating set $\mathscr{G}$ such that the Cayley graphs\footnote{If $\mathscr{G}$ is a finite, symmetric generating set of a group
$G$, the Cayley graph of $(G,\mathscr{G})$ has vertex set $G$,
and $g_{1},g_{2}\in G$ are joined by and edge if and only if $g_{1}=sg_{2}$
for some $s\in\mathscr{G}$.} of\footnote{Recall that $\Gamma_{q}$ is the kernel of the projection $\Gamma\to\GL(d,\ZZ/q\ZZ)$.}
$(\Gamma_{q}\backslash\Gamma,\Gamma_{q}\mathscr{G})$, with $q$ running
in the positive, square-free integers, is an \emph{expander family}\footnote{Let us recall the definition of expander family of graphs. If $\mathcal{G}=(V,E)$
is a finite graph, its adjacency operator $A_{\mathcal{G}}$ acts
on $\CC$-valued functions $f$ on $V$ as follows: $(A_{\mathcal{G}}f)(v)$
is the sum of the values of $f$ at the neighbors of $v$. $A_{\mathcal{G}}$
is a self-adjoint operator on $\ell^{2}(V)$, hence it has real eigenvalues
that we denote $\lambda_{0}(\mathcal{G})\geq\lambda_{1}(\mathcal{G})\geq\cdots$.
An infinite sequence $\mathcal{G}_{n}=(V_{n},E_{n}),n\geq0$ of finite,
$d$-regular graphs is an expander family if there is an $\eps>0$
such that $\lambda_{0}(\mathcal{G}_{n})-\lambda_{1}(\mathcal{G}_{n})\geq\eps$
for any $n$, and $\#V_{n}\to\infty$ as $n\to\infty$.}. Here is the precise statement of \cite[Theorem 1.1]{bourgain-gamburd-sarnak_generalization_2011}.
\begin{thm}
\label{thm:Affine_sieve}Let $\Gamma$ be a finitely generated subgroup
of $\GL(d,\ZZ)$ whose Zariski-closure $\Gne$ is a connected, simply
connected and absolutely almost simple\footnote{These three conditions are in the sense of algebraic groups. For the
precise definitions see \cite{platonov-rapinchuk_algebraic_1994}. } subgroup of $\GLne(d)$ defined over $\QQ$. Let $P\in\QQ[\Gne]$
be a regular function on $\Gne$ that is neither zero nor a unit of
the coordinate ring $\QQ[\Gne]$, and taking integral values on $\Gamma$.
If $\Gamma$ verifies the square-free expansion hypothesis, then $R(\Gamma,P)$
is finite. Moreover, if $P$ is primitive on $\Gamma$\footnote{We say that $P$ is primitive on $\mathcal{O}$ if any integer $n\geq2$
is relatively prime to $P(x)$ for some $x\in\mathcal{O}$. This condition
ensures that the values of $P$ on $\mathcal{O}$ do not have unnecessary
common factors.}, there is an explicit upper bound for $R(\Gamma,P)$ in terms of
the spectral gap of the family of Cayley graphs of $\Gamma/\Gamma_{q}$,
where $q$ runs through the square-free natural numbers. 
\end{thm}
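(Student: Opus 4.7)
The plan is to reduce the saturation problem for $(\Gamma,P)$ to a combinatorial sieve applied to the sequence $(P(\gamma))_{\gamma\in\Gamma}$, in which the local-to-global step is fed by the square-free expansion hypothesis. Fix an exhausting family $B_{T}\subset\Gamma$ (for example, word-metric balls for $\mathscr{G}$) and set $N(T)=\#B_{T}$. For each square-free $q$, let
\[
S_{q}(T)=\#\{\gamma\in B_{T}:q\mid P(\gamma)\},\qquad\rho(q)=\frac{\#\{g\in\Gamma/\Gamma_{q}:q\mid P(g)\}}{\#(\Gamma/\Gamma_{q})},
\]
and write $r_{q}(T)=S_{q}(T)-\rho(q)N(T)$. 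A sieve will produce many $\gamma\in B_{T}$ with $P(\gamma)$ an $R$-almost prime provided one has both a multiplicative local density $\rho$ with correct leading behavior and a level-of-distribution estimate for $\sum_{q\leq Q}|r_{q}(T)|$ with $Q$ a small power of $N(T)$.

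The local density is handled by strong approximation together with Lang--Weil. Since $\Gne$ is connected, simply connected, and absolutely almost simple over $\QQ$, strong approximation (in the form of Matthews--Vaserstein--Weisfeiler and Nori) yields $\Gamma/\Gamma_{q}\cong\Gne(\ZZ/q\ZZ)$ for every square-free $q$ coprime to a fixed finite bad set $S_{0}$. In particular $\rho$ is multiplicative outside $S_{0}$, and the Lang--Weil estimates applied to the hypersurface $\{P=0\}\subset\Gne$ give $\rho(p)=c_{P}/p+O(p^{-3/2})$ with $c_{P}\in\NN$ nonzero at all but finitely many primes.

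The analytic core is to translate the square-free expansion of the Cayley graphs of $\Gamma/\Gamma_{q}$ into a bound on $r_{q}(T)$. Expanding the indicator of $\{P\equiv 0\pmod{q}\}$ in characters of $\Gamma/\Gamma_{q}$ --- equivalently, applying the spectral theorem to the random walk on $\Gamma/\Gamma_{q}$ driven by $\mathscr{G}$ --- each non-trivial Fourier mode is damped by a factor $(1-\eps')^{T}$, where $\eps'>0$ depends only on the expansion constant. For composite $q$, a Goursat-type lemma realises $\Gamma/\Gamma_{q}$ as an essentially direct product of the $\Gne(\ZZ/p\ZZ)$ over $p\mid q$, reducing to the prime case. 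Summing over residues and over square-free $q\leq N(T)^{\theta}$ for $\theta$ small then gives $\sum_{q\leq Q}|r_{q}(T)|\ll N(T)^{1-c\eps'}$.

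With both inputs in hand, a combinatorial sieve (Brun, or a weighted Rosser--Iwaniec variant adapted to polynomial sequences) produces $\gg N(T)/(\log N(T))^{\deg P}$ elements $\gamma\in B_{T}$ with $P(\gamma)$ an $R$-almost prime, for $R$ depending only on $\deg P$ and $\theta$. Since $\Gamma$, and hence $B_{T}$, is Zariski-dense in $\Gne$, this many points cannot lie on a proper subvariety, so $\mathcal{O}_{P}(R)$ is Zariski-dense and $R(\Gamma,P)\leq R$. When $P$ is primitive, the density $\rho(p)$ stays of the expected size at every prime, which lets one track $R$ explicitly through the sieve parameters, themselves depending only on the expansion constant $\eps$. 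The main obstacle is the level-of-distribution step: producing effective, uniform control of $r_{q}(T)$ for square-free $q$ up to a power of $N(T)$, using only the spectral gaps on individual quotients, is the entire technical heart of the affine sieve.
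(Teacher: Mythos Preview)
This theorem is not proved in the paper. It is quoted verbatim as \cite[Theorem 1.1]{bourgain-gamburd-sarnak_generalization_2011} in the subsection on arithmetic applications, to explain why explicit spectral gaps feed into bounds on saturation numbers. The paper supplies no argument for it; its own contributions are Theorem~\ref{thm:MainThm} and Proposition~\ref{prop:Multiplicity_bound}. So there is no ``paper's own proof'' to compare your proposal against.

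That said, your outline is a fair sketch of the Bourgain--Gamburd--Sarnak method: strong approximation (via Matthews--Vaserstein--Weisfeiler/Nori) plus Lang--Weil to control the local densities $\rho(p)$, the expander property of the Cayley graphs converted into a level-of-distribution statement for $r_q(T)$, and a combinatorial sieve to finish. You correctly identify the level-of-distribution step as the real work. If you want to turn this into an actual proof rather than a narrative, the two places that would need genuine care are: (i) making the passage from a spectral gap on each $\Gamma/\Gamma_q$ to a \emph{uniform} error $\sum_{q\le Q}|r_q(T)|\ll N(T)^{1-c}$ fully quantitative, which in \cite{bourgain-gamburd-sarnak_generalization_2011} is done via an explicit random-walk argument rather than a vague ``expand in characters''; and (ii) the Zariski-density conclusion, which needs slightly more than ``this many points cannot lie on a proper subvariety'' since a priori the almost-primes found by the sieve could conspire to lie in a fixed hypersurface---one uses that the sieve actually produces a positive proportion (on a logarithmic scale) in \emph{each} congruence class, or restricts to cosets, to rule this out.
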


Theorem \ref{thm:Affine_sieve} applies to any nonelementary subgroup
$\Gamma$ of $\SL(2,\ZZ)$ with $\gexp>\frac{1}{2}$. Indeed, the
Zariski closure of any such $\Gamma$ is $\SLne(2)$ and, according
to \cite[Theorem 1.2]{bourgain-gamburd-sarnak_generalization_2011},
the square-free expansion hypothesis for $\Gamma$ is equivalent to
a uniform spectral gap for the square-free congruence coverings of
$\Gamma\backslash\Hyp$. This last condition is guaranteed by the
nonexplicit spectral gap \cite[Theorem 1.1]{bourgain-gamburd-sarnak_generalization_2011}.
To get an upper bound for the saturation number we need an explicit
spectral gap, such as Gamburd's Theorem \ref{thm:Gamburds_thm} or
our Theorem \ref{thm:MainThm}. 

Here is a concrete situation in which our main theorem can improve
known results on saturation numbers. A \emph{Pythagorean triple} is
an integral solution of the equation $x_{1}^{2}+x_{2}^{2}=x_{3}^{2}$.
The integers $|x_{1}|,|x_{2}|,|x_{3}|$ are the lengths of the sides
of a right triangle, thus the polynomials 
\[
P_{\mathtt{a}}(x)=\frac{x_{1}x_{2}}{2}\quad\text{and}\quad P_{\mathtt{h}}(x)=x_{3}
\]
give respectively the area and the length of the hypotenuse of the
triangle associated to $(x_{1},x_{2},x_{3})$ when the $x_{i}$'s
are nonnegative. Consider also the product of the coordinates $P_{\mathtt{c}}(x)=x_{1}x_{2}x_{3}$.
Any primitive Pythagorean triple $(x_{1},x_{2},x_{3})$---the greatest
common divisor of the coordinates is 1---with $x_{3}\geq0$ is of
the form $(a^{2}-b^{2},2ab,a^{2}+b^{2})$ for some $(a,b)\in\ZZ^{2}$.
Let $\Gamma$ be a finitely generated subgroup of $\SL(2,\ZZ)$ and
identify $\mathcal{O}=(1,0)\Gamma$ with its corresponding set of
Pythagorean triples. The articles \cite{kontorovich_hyperbolic_2009},
\cite{kontorovich-oh_almost_2012}, \cite{hong-kontorovich_almost_2015}
and \cite{ehrman_almost_2019} give explicit upper bounds of $R(\mathcal{O},P)$
for some\footnote{Not all the above mentioned works treat the three polynomials $P$
we are considering.} $P\in\{P_{\mathtt{a}},P_{\mathtt{c}},P_{\mathtt{h}}\}$ provided
$\gexp$ is big enough. A common feature of these works is that the
quantitative square-free expansion hypothesis for $\Gamma$ is deduced
from Gamburd's Theorem \ref{thm:Gamburds_thm}. Using instead our
Theorem \ref{thm:MainThm} when $\Gamma$ is a Schottky subgroup of
$\SL(2,\ZZ)$ with $\gexp>\frac{4}{5}$, one can extend the range
of parameters for which these results hold\footnote{If not the $R$-values, which jump, but definitely the range of $\delta$-values.}.
Similarly, our main result improves the $\delta$-range in \cite[Theorem 1.2]{bourgain-kontorovich_representations_2010}. 

\subsection{Further work: higher dimensional hyperbolic manifolds}

In a future work we will extend our Theorem \ref{thm:MainThm} to
higher dimensions. In the informal discussion that follows we give
more details and some context. 

Let $M=\Gamma\backslash\HH^{d}$ be a $d$-dimensional hyperbolic
manifold, with $\Gamma$ contained in an arithmetic subgroup of the
group $\SO^{\circ}(d,1)$ of isometries of $\HH^{d}$ preserving the
orientation. We are interested in uniform spectral gaps for congruence
coverings of $M$. As with surfaces, the most studied case is $M$
of finite volume---two examples are \cite[Théorème 1.2]{bergeron_clozel_quelques_2012}
and \cite[Theorem 1]{kelmer-silberman_uniform_2013}---. In the infinite
volume case, Magee established an explicit uniform spectral gap, akin
to Gamburd's Theorem \ref{thm:Gamburds_thm}, when $\Gamma$ is geometrically
finite, Zariski dense in $\SOne(d,1)$ and $\gexp$ is big enough---the
precise statement is \cite[Theorem 1.6]{magee_quantitative_2015}---.
We want to improve this result for $M$ convex cocompact by using
an approach similar to our proof of Theorem \ref{thm:MainThm}. This
is possible since for such an $M$, the---recurrent part of the---geodesic
flow of $M$ admits a Markov partition that yields a coding by a symbolic
dynamical system. Schottky hyperbolic surfaces are a particular instance.
Through the thermodynamic formalism we get dynamical zeta functions
of $M$, which we will use to control the eigenvalues of congruence
coverings of $M$. By a similar approach, Sarkar proved a nonexplicit
uniform spectral gap for convex cocompact $M$ in the recent work
\cite{sarkar_generalization_2022}. We aim for an explicit result
provided $\gexp$ is big enough.

\subsection{Organization of the article}

The article is divided as follows: We gather base definitions and
preliminary results in Section \ref{sec:Preliminaries}. In particular
we introduce Fuchsian Schottky groups and we explain the symbolic
dynamics of their action on $\overline{\Hyp}$. The various zeta functions
attached to a Schottky hyperbolic surface $S$---as well as the $\zeta_{\tau,n}$
used in the proof of Proposition \ref{prop:Multiplicity_bound}---are
introduced in Section \ref{sec:Zeta-functions}, where we also explain
the relation between zeros of these and the eigenvalues of $S$. The
main estimate we need to count zeros of $\zeta_{\tau,n}$ is established
in Section \ref{sec:Main-estimate}. Having introduced all the tools
we need, we prove our main result in Section \ref{sec:Main_proof}.

\subsection{Acknowledgments}

We thank Alex Kontorovich for his helpful comments on an earlier version
of the article. This project has received founding from the European
Research Council (ERC) under the European Union's Horizon 2020 research
and innovation programme (grant agreement No 949143).

\section{\label{sec:Preliminaries}Preliminaries}

This section gathers various base definitions and preliminary results.
It has four parts: In Subsection \ref{subsec:Basic_definitions} we
recall the concepts and facts about hyperbolic surfaces and its Laplace-Beltrami
operator needed in this work. Then we define Fuchsian Schottky groups
and the associated Schottky surfaces in Subsection \ref{subsec:Fuchsian-Schottky-groups}.
There, we also explain the correspondence, given a set $\Ac$ of Schottky
generators of $\Gamma$, between elements of $\Gamma$, reduced words
$w$ in $\Ac$, and certain intervals $I_{w}\subset\partial_{\infty}\Hyp$.
Subsection \ref{subsec:Dynamics-in-the-boundary} contains some lemmas
concerning the lengths of the $I_{w}$'s---like how these behave
under concatenation of words---and their relation to the word length
of $w$. Finally, in Subsection \ref{subsec:Counting-in-SL2Z} we
estimate the number of matrices of norm $\leq R$ in prime congruence
subgroups of $\SL(2,\ZZ)$.

\subsection{Basics on hyperbolic manifolds\label{subsec:Basic_definitions}}

A \emph{hyperbolic surface }is a Riemannian surface without boundary
of constant curvature $-1$. The\emph{ real hyperbolic plane} $\Hyp$
is the unique complete, simply connected hyperbolic surface. We will
work with the upper half-plane model: 

\[
\Hyp=\{z\in\CC\mid\im z>0\}.
\]
with the Riemannian metric $\frac{dx^{2}+dy^{2}}{y^{2}}$ in coordinates
$z=x+iy$. The action of $\SL(2,\RR)$ on $\HH^{2}$ by Möbius transformations
is isometric and induces an isomorphism between the group of orientation
preserving isometries of $\Hyp$ and $\PSL(2,\RR)$. We compactify
$\Hyp$ to $\overline{\Hyp}=\Hyp\cup\partial_{\infty}\HH^{2}$ by
adding the boundary $\partial_{\infty}\HH^{2}=\RR\cup\{\infty\}$.

Any hyperbolic surface $S$ we consider is assumed to be connected,
orientable and complete. Thus it can be written as $\Gamma\backslash\Hyp$
for some discrete subgroup $\Gamma$ of $\PSL(2,\RR)$ without torsion.
We say $S$ is \emph{geometrically finite }if some convex polygon
in $\Hyp$ with finitely many sides is a fundamental domain of $\Gamma$.
This is equivalent---for hyperbolic surfaces---to $\Gamma$ being
finitely generated.

Consider a discrete subgroup $\Gamma$ of $\PSL(2,\RR)$. When all
the $\Gamma$-orbits in $\overline{\Hyp}$ are infinite, we say $\Gamma$
is \emph{nonelementary}. The \emph{limit set} $\Lambda_{\Gamma}$
of $\Gamma$ is the set of accumulation points in $\overline{\Hyp}$
of $\Gamma x$ for some $x\in\Hyp$. It does not depend on the choice
of $x$. Since $\Gamma x$ is discrete, $\Lambda_{\Gamma}$ is contained
in $\partial_{\infty}\HH^{2}$. The limit set and its convex hull
$\text{Conv \ensuremath{\Lambda_{\Gamma}}}$ are closed, $\Gamma$-invariant
subsets of $\overline{\HH^{2}}$. We say that $\Gamma$ is \emph{convex
cocompact }if $\Gamma\backslash(\HH^{2}\cap\text{Conv }\Lambda_{\Gamma})$
is compact. A surface $\Gamma\backslash\Hyp$ is convex cocompact
if $\Gamma$ has this property. There are two kinds of convex cocompact
hyperbolic surfaces: in finite volume, the compact ones, and Schottky
surfaces---see Subsection \ref{subsec:Fuchsian-Schottky-groups}---in
infinite volume. The characterization of infinite volume convex cocompact
surfaces is a result of Button in \cite{button_all_1998}. The \emph{growth
exponent }$\gexp$ of $\Gamma$ is the infimum of the $s>0$ such
that 

\[
\sum_{\gamma\in\Gamma}\exp(-s\rho(x,\gamma x))<\infty,
\]
where $x\in\HH^{2}$ and $\rho$ is the hyperbolic metric. The growth
exponent is independent of the choice of $x$ and lies in the interval
$[0,1]$. For finitely generated $\Gamma$, $\gexp$ coincides with
the Hausdorff dimension of $\Lambda_{\Gamma}$. 

Let us recall the definition of the Laplace-Beltrami operator $\Delta_{X}$
of a Riemannian manifold $X$. The reader can find the details in
\cite[Chapter 4]{grigoryan_heat_2009}. Initially, $\Delta_{X}$ is
defined on the space $\mathscr{C}_{c}^{\infty}(X)$ of smooth functions
$\varphi:X\to\CC$ with compact support as follows: $\Delta_{X}\varphi$
is the divergence of the gradient of $\varphi$. As unbounded operator
of $L^{2}(X)$---with respect to the Riemannian measure of $X$---with
domain $\mathscr{C}_{c}^{\infty}(X)$, $\Delta_{X}$ is symmetric
by the Green Formula, but not self-adjoint. To remedy this one extends---using
distributional derivatives---$\Delta_{X}$ to the following closed
subspace 

\[
W_{0}^{2}(X)=\text{cl}_{W^{1}}(\mathscr{C}_{c}^{\infty}(X))\cap\{f\in W^{1}(X)\mid\Delta f\in L^{2}(X)\}
\]
of the Sobolev space $W^{1}(X)$. This extension, which we still denote
$\Delta_{X}$, is an unbounded, self-adjoint operator on $L^{2}(X)$---see
\cite[Theorem 4.6]{grigoryan_heat_2009}---with spectrum contained
in $[0,\infty)$\footnote{Taking the appropriate sign convention.}.
We will work always with this $\Delta_{X}$.

Let $S$ be a geometrically finite hyperbolic surface. Here are some
important properties of the spectrum of $\Delta_{S}$. The last two
are respectively due to Sullivan \cite{sullivan_density_1979} and
Lax-Phillips \cite{lax_asymptotic_1982}: 
\begin{enumerate}
\item When $S$ is noncompact, the continuous part of the spectrum of $\Delta_{S}$
is $\left[\frac{1}{4},\infty\right)$---see \cite[Theorem 2.12]{hislop_geometry_1994}---.
\item $\Delta_{S}$ has eigenvalues---which we sometimes call eigenvalues
of $S$---if and only if $\gexp>\frac{1}{2}$, and in that case the
smallest one is $\lambda_{0}(S)=\gexp(1-\gexp)$.
\item $S$ has finitely many eigenvalues in the interval $\left[0,\frac{1}{4}\right)$,
which we denote $\lambda_{0}(S)\leq\cdots\leq\lambda_{k}(S)$. Moreover,
these are all the eigenvalues of $\Delta_{S}$ when $S$ has infinite
volume\footnote{A finite-volume $S$ might have eigenvalues in $\left[\frac{1}{4},\infty\right)$,
even infinitely many.}. 
\end{enumerate}

\subsection{Fuchsian Schottky groups\label{subsec:Fuchsian-Schottky-groups}}

For our purposes, working with $\SL(2,\RR)$ or $\PSL(2,\RR)$ makes
no difference, so we stick to $\SL(2,\RR)$ for simplicity. By a \emph{Fuchsian
group} we mean a discrete subgroup of $\SL(2,\RR)$. We describe now
the family of Fuchsian groups relevant to our work. Consider a positive
integer $N$ and $\Ac=\{1,2,\ldots,2N\}$. For any $a\in\Ac$ we denote

\[
\mirror a=\begin{cases}
a+N & \text{if }a\leq N,\\
a-N & \text{if }a>N.
\end{cases}
\]
Suppose we are given a sequence $\mathcal{\Sc}=(D_{a},\gamma_{a})_{a\in\Ac}$
consisting of open disks $(D_{a})_{a\in\Ac}$ in $\CC$ with centers
in $\RR$ and with pairwise disjoint closures\footnote{We will denote by $\cl A$ the closure of a subset $A$ of $\CC$
instead of $\overline{A}$ to avoid confusions with the complex conjugation.}, and matrices ($\gamma_{a})_{a\in\Ac}$ in $SL(2,\RR)$ such that
$\gamma_{\mirror a}=\gamma_{a}\inv$ and

\begin{equation}
\gamma_{a}(\Hyp-D_{\mirror a})=\Hyp\cap\cl D_{a}\label{eq:def_Schottky}
\end{equation}
for any $a\in\Ac$. Such a sequence $\Sc$ will be called \emph{Schottky
data,} and we associate to it the subgroup $\Gamma_{\Sc}$ of $\SL(2,\RR)$
generated by $(\gamma_{a})_{a\in\Ac}$. A \emph{Fuchsian Schottky}
group is a subgroup $\Gamma$ of $\SL(2,\RR)$ such that $\Gamma=\Gamma_{\Sc}$
for some Schottky data $\Sc$. Here are some general properties of
a Fuchsian Schottky group $\Gamma=\Gamma_{\Sc}$: We define 
\begin{equation}
U=\cup_{a\in\Ac}D_{a}.\label{eq:def_U}
\end{equation}
The set $\Fc:=\Hyp-U$ is a fundamental domain for $\Gamma$ on $\Hyp$,
so $\Gamma$ is indeed a Fuchsian group. Moreover, $\Gamma$ is a
free group with basis $\gamma_{1},\ldots,\gamma_{N}$ by (\ref{eq:def_Schottky})
and the ping-pong argument. Note that the quotient $\Gamma\backslash\Hyp$
is a convex cocompact hyperbolic surface of infinite area since the
intersection of $\Fc$ with the convex hull of $\Lambda_{\Gamma}$
is compact, $\Fc$ has infinite area, and $\Gamma$ is torsion-free.
Conversely, the result of Button in \cite{button_all_1998} mentioned
in Subsection \ref{subsec:Basic_definitions} says that if $\Gamma_{0}\backslash\Hyp$
is a convex cocompact hyperbolic surface of infinite area, then $\Gamma_{0}$
is a Fuchsian Schottky group. 

Since $\Gamma=\Gamma_{S}$ is free, it admits a straightforward coding
by words on $\Ac$. Let us fix some terminology to describe it. A
\emph{word} with alphabet $\Ac$ is either a finite sequence $w=(a_{1},\ldots,a_{m})$
with all the $a_{j}$ in $\Ac$, or the empty word $\emptyset$. The
word $w$ is \emph{reduced} if either $w=\emptyset$, or $\mirror{a_{j}}\neq a_{j+1}$
for any $j$. We denote by $\Words$ the set of reduced words with
alphabet $\Ac$, and by $\neWords$ the set of nonempty reduced words.
Consider any $w=(a_{1},\ldots,a_{m})$ in $\neWords$. The \emph{length
}$\wlength w$ of $w$ is $m$, and we define $|\emptyset|=0$. Let
$\Words_{m}$ and $\Words_{\geq m}$ be respectively the reduced words
of length $m$ and $\geq m$. Sometimes we will write $\Start w=a_{1}$
and $\End w=a_{m}$ for the initial and last letters of $w$, respectively.
We define 
\[
w'=(a_{1},\ldots,a_{m-1})\quad\text{and}\quad\mirror w=(\mirror{a_{m}},\ldots,\mirror{a_{1}}).
\]
We say that $\mirror w$ is the \emph{mirror word} of $w$, and the
\emph{mirror set} of any $\Bc\subset\Words$ is

\[
\mirror{\Bc}=\{\mirror w\mid w\in\Bc\}.
\]
By convention $\mirror{\emptyset}=\emptyset$. Note that $\gamma_{\widetilde{w}}=\gamma_{w}\inv$.
The concatenation of two words $w_{1}=(a_{1},\ldots,a_{m}),w_{2}=(b_{1},\ldots,b_{m})$
is $w_{1}w_{2}:=(a_{1},\ldots,a_{m},b_{1},\ldots b_{m})$. By $w_{1}\to w_{2}$
we mean that $w_{1}w_{2}$ is reduced, and by $w_{1}\rightsquigarrow w_{2}$
we mean that $w_{1}$ and $w_{2}$ are nonempty and $\End{w_{1}}=\Start{w_{2}}$,
in which case $w_{1}'w_{2}$ is reduced. 

The encoding of $\Gamma=\text{\ensuremath{\Gamma_{\Sc}}}$ by $\Words$
is very transparent: We send any $w=(a_{1},\ldots,a_{m})\in\neWords$
to

\[
\gamma_{w}=\gamma_{a_{1}}\cdots\gamma_{a_{m}},
\]
and $\gamma_{\emptyset}=I$. Many properties of the surface $\Gamma\backslash\Hyp$
can be deduced by studying the action of $\Gamma$ in $\overline{\Hyp}$\footnote{The paper \cite{lalley_renewal_1989} of Lalley has several interesting
examples.}. The following notation is useful to do so: To any $w=(a_{1},\ldots,a_{m})\in\neWords$
we associate a disk in $\CC$ and an interval in $\RR$: 
\[
D_{w}:=\gamma_{w'}(D_{a_{m}}),\quad I_{w}:=D_{w}\cap\RR.
\]
We denote by $\len w$ the length of $I_{w}$. The limit set $\Lambda_{\Gamma}$
can be encoded by the set $\Words_{\infty}$ of infinite reduced words
in $\Ac^{\ZZ_{\geq1}}$ as follows: For any $w=(a_{1},a_{2},\ldots)\in\Words_{\infty}$
and any integer $n>0$, let $w_{n}=(a_{1},\ldots,a_{n})$. Note that
$w_{n}\Fc$ is contained in $D_{w_{n}}$, and that the nested sequence
of disks $D_{w_{1}}\supset D_{w_{2}}\supset\cdots$ shrinks to a point
$x_{w}\in\Lambda_{\Gamma}$. The map $\Words_{\infty}\to\Lambda_{\Gamma},w\mapsto x_{w}$
is bijective. 

\subsection{Dynamics on the boundary of $\protect\Hyp$\label{subsec:Dynamics-in-the-boundary}}

\global\long\def\consWordLengthBtauMult#1{C_{#1,10}}%
 
\global\long\def\consWordLengthBtauAdd#1{A_{#1,1}}%
 
\global\long\def\maxWL#1{L_{#1}}%
 
\global\long\def\consWordsNotLetters#1{\tau_{#1}}%

\global\long\def\consExpContractionUpper#1{\theta_{#1}}%
 
\global\long\def\consExpContractionLower#1{\eta_{#1}}%

In this subsection $\Gamma=\Gamma_{\Sc}$ is a Fuchsian Schottky group.
We collect some lemmas from \cite{bourgain_fourier_2017} and \cite{magee_explicit_2020}
that we will need about the dynamics of $\Gamma$ on $\partial_{\infty}\Hyp$.
From this point we will use the following notation for strictly positive
numbers $A,B$: $A\llG B$ means that there is a positive constant
$C_{\Gamma}$, depending only on $\Gamma$, such that $A\leq C_{\Gamma}B$.
We define $A\ggG B$ in the same fashion, and $A\asymG B$ means that
$A\llG B$ and $A\ggG B$.

We start with general considerations for Möbius transformations given
by matrices in $\SL(2,\RR)$.Consider two intervals $J_{1}=[x_{1},y_{1}],J_{2}$
contained in $\RR$. We recall a useful parametrization in \cite[Section 2.2]{bourgain_fourier_2017}
of the orientation-preserving isometries $\varphi$ of $\Hyp$ such
that $\varphi(J_{1})=J_{2}$. When $J_{1}=J_{2}=[0,1]$, a direct
computation shows that any such $\varphi$ is represented by a unique
matrix in $\SL(2,\RR)$ of the form

\begin{equation}
g_{\alpha}:=\begin{pmatrix}e^{\alpha/2} & 0\\
e^{\alpha/2}-e^{-\alpha/2} & e^{-\alpha/2}
\end{pmatrix}.\label{eq:def_g_alfa}
\end{equation}
Consider now any $J_{1},J_{2}$. For any $J=[x_{J},y_{J}]\subset\RR$
we define

\begin{equation}
T_{J}=\begin{pmatrix}|J|^{1/2} & x_{J}|J|^{-1/2}\\
0 & |J|^{-1/2}
\end{pmatrix}.\label{eq:def_TJ}
\end{equation}
Note that the Möbius transformation corresponding to $T_{J}$, that
we still denote $T_{J}$, is an affine map and $T_{J}([0,1])=J$.
Hence any $\varphi$ such that $\varphi(J_{1})=J_{2}$ is represented
by a matrix of the form

\[
T_{J_{2}}g_{\alpha}T_{J_{1}}\inv.
\]
Now suppose that $g(J_{1})=J_{2}$ for some $g\in\SL(2,\RR)$. By
the reasoning above we can write $g$ as

\[
g=RT_{J_{2}}g_{\alpha}T_{J_{1}}\inv,
\]
for some $R=\pm I$ and $\alpha\in\RR$. Let $x_{g}=g\inv(\infty)$.
Note that $\alpha$ is the unique real number such that $RT_{J_{2}}g_{\alpha}T_{J_{1}}\inv(x_{g})=\infty$.
Solving the equation we see that $\alpha$ must be

\begin{equation}
\alpha(g,J_{1})=\log\frac{g\inv(\infty)-y_{1}}{g\inv(\infty)-x_{1}}\label{eq:def_distortion_factor}
\end{equation}
when $g\inv(\infty)\neq\infty$, and $\alpha(g,J_{1})=0$ when $g\inv(\infty)=\infty$.
The number $\alpha(g,J_{1})$ is what Bourgain and Dyatlov call the
\emph{distortion factor} of $g$ on $J_{1}$. We have established
the next lemma:
\begin{lem}
\label{lem:distortion_decomposition}Let $J_{1},J_{2}\subset\RR$
be compact intervals. We can write any $g\in\SL(2,\RR)$ such that
$g(J_{1})=J_{2}$ as $g=RT_{J_{2}}g_{\alpha(g,J_{1})}T_{J_{1}}\inv$
for some $R\in\{I,-I\}$. 
\end{lem}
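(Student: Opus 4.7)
The argument is essentially assembled from the computations performed in the paragraphs immediately preceding the statement, so the plan is to organize those observations into three clean steps.

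First, I would reduce to the model case $J_1 = J_2 = [0,1]$. Given $g \in \SL(2,\RR)$ with $g(J_1) = J_2$, I would introduce the auxiliary matrix $h := T_{J_2}^{-1} g T_{J_1} \in \SL(2,\RR)$. Viewed as a M\"obius transformation, each $T_J$ is the affine bijection $[0,1] \to J$ and each factor preserves $\Hyp$, so $h$ maps $[0,1]$ to $[0,1]$ as an oriented interval.

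Second, I would classify the $h \in \SL(2,\RR)$ that stabilize $[0,1]$ and fix the two endpoints $0$ and $1$. The M\"obius transformations fixing both $0$ and $1$ form a one-parameter subgroup of $\PSL(2,\RR)$, and a direct matrix computation (matching $h(0)=0$, $h(1)=1$, $\det h = 1$) shows that its lifts to $\SL(2,\RR)$ are exactly the matrices $\pm g_\alpha$ of (\ref{eq:def_g_alfa}), for a unique $\alpha \in \RR$. Absorbing the sign into $R \in \{I,-I\}$, this gives $h = R g_\alpha$, whence $g = R T_{J_2}\, g_\alpha\, T_{J_1}^{-1}$.

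Third, I would identify $\alpha$ explicitly. Since $T_{J_2}$ and $R$ both fix $\infty$, the required $\alpha$ is characterized by $g_\alpha\!\left(T_{J_1}^{-1}(g^{-1}(\infty))\right) = \infty$. Inverting (\ref{eq:def_g_alfa}) gives $g_\alpha^{-1}(\infty) = (1 - e^\alpha)^{-1}$, and composing with the affine map $T_{J_1}^{-1}(t) = (t - x_1)/(y_1 - x_1)$ yields, after a short rearrangement, the formula
\[
\alpha(g, J_1) \;=\; \log \frac{g^{-1}(\infty) - y_1}{g^{-1}(\infty) - x_1},
\]
valid when $g^{-1}(\infty) \neq \infty$. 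The degenerate case $g^{-1}(\infty) = \infty$ forces $h^{-1}(\infty) = \infty$, which is consistent with $\alpha = 0$ in (\ref{eq:def_g_alfa}), so the convention $\alpha(g, J_1) = 0$ there completes the statement.

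No step is a real obstacle; the only subtlety is the sign ambiguity $R \in \{I, -I\}$, which is unavoidable and reflects the fact that $\SL(2,\RR) \to \PSL(2,\RR)$ is a double cover. At the level of $\PSL(2,\RR)$ the decomposition is unique; lifting to $\SL(2,\RR)$ produces precisely the two-fold ambiguity recorded in the lemma.
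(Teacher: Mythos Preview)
Your proposal is correct and follows essentially the same route as the paper: the paper's proof is precisely the discussion in the paragraphs preceding the lemma (reduce to $[0,1]$ via $T_{J_1},T_{J_2}$, identify the stabilizer of $[0,1]$ as $\{\pm g_\alpha\}$, then solve for $\alpha$ using $g^{-1}(\infty)$), and you have simply organized those same computations into three clean steps. The only point you might make slightly more explicit is why $h$ fixes $0$ and $1$ individually rather than swapping them---this follows because an element of $\SL(2,\RR)$ has positive derivative on any real interval avoiding its pole, hence is increasing on $J_1$.
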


In the rest of this subsection we work with a Fuchsian Schottky group
$\Gamma=\Gamma_{\Sc}$. We define

\begin{equation}
\maxWL{\Gamma}=\max_{a\in\Ac}\len a.\label{eq:def_max_boundary_length}
\end{equation}

\begin{lem}
\label{lem:uniformBoundIw}Let $\Gamma$ be a Fuchsian Schottky group.
Then $|I_{w}|\leq\maxWL{\Gamma}$ for any $w\in\neWords$.
\end{lem}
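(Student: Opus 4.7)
The plan is to prove by induction on $\wlength w$ the stronger containment $D_w\subseteq D_{\Start w}$, which immediately yields the lemma. Indeed, $\gamma_{w'}\in\SL(2,\RR)$ commutes with complex conjugation, so $D_w=\gamma_{w'}(D_{\End w})$ is symmetric about $\RR$; combined with the inclusion into the bounded Euclidean disk $D_{\Start w}$, this forces $D_w$ itself to be a bounded Euclidean disk with center on $\RR$. Hence $I_w=D_w\cap\RR$ is a diameter of $D_w$, contained in the diameter $I_{\Start w}$ of $D_{\Start w}$, so $\len w\leq\len{\Start w}\leq\maxWL{\Gamma}$ follows.

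For the induction itself, the base case $\wlength w=1$ is trivial since $D_{(a)}=D_a$. If $w=(a_1,\ldots,a_m)$ with $m\geq 2$, set $v=(a_2,\ldots,a_m)\in\neWords$. Factoring $\gamma_{w'}=\gamma_{a_1}\gamma_{(a_2,\ldots,a_{m-1})}$ gives $D_w=\gamma_{a_1}(D_v)$ (reading $\gamma_{\emptyset}=I$ when $m=2$), and the inductive hypothesis provides $D_v\subseteq D_{a_2}$. Reduction of $w$ means $\mirror{a_1}\neq a_2$, so the Schottky data imposes $\cl D_{a_2}\cap\cl D_{\mirror{a_1}}=\emptyset$, putting $D_v$ in the complement of $\cl D_{\mirror{a_1}}$ on the Riemann sphere.

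The final ingredient is the global form of the Schottky relation (\ref{eq:def_Schottky}):
\[
\gamma_{a_1}\bigl((\CC\cup\{\infty\})\setminus\cl D_{\mirror{a_1}}\bigr)=D_{a_1}.
\]
As a M\"obius transformation, $\gamma_{a_1}$ sends $\partial D_{\mirror{a_1}}$ homeomorphically onto $\partial D_{a_1}$ and thus maps the two components of the sphere complement of $\partial D_{\mirror{a_1}}$ onto the two components of the complement of $\partial D_{a_1}$. The relation (\ref{eq:def_Schottky}) restricted to $\Hyp$ then forces the interior of $D_{\mirror{a_1}}$ to go to the exterior of $D_{a_1}$ and vice versa, yielding the global identity. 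Applying it to $D_v$ gives $D_w\subseteq D_{a_1}$ and closes the induction. I expect this interior/exterior swap to be the only subtle point; everything else is routine bookkeeping.
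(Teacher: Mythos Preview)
Your proof is correct and follows the same route as the paper: both establish the containment $D_w\subseteq D_{\Start w}$ from the Schottky relation (\ref{eq:def_Schottky}) and then intersect with $\RR$. The paper simply asserts this containment in one line, while you spell out the induction and the interior/exterior swap carefully; your additional remark that $D_w$ is itself a Euclidean disk is true but unnecessary, since $I_w\subseteq I_{\Start w}$ already follows by intersecting $D_w\subseteq D_{\Start w}$ with $\RR$.
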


\begin{proof}
Since $w$ is reduced, from (\ref{eq:def_Schottky}) we see that $D_{w}=\gamma_{w'}(D_{w})$
is contained in $D_{\Start w}$, and hence $I_{w}\subset I_{\Start w}$.
Thus $\len w\leq\maxWL{\Gamma}$.
\end{proof}
\begin{lem}
\label{lem:distortion_bound}Let $\Gamma$ be a Fuchsian Schottky
group. For any $w\in\Words_{\geq2}$ we have $|\alpha(\gamma_{w'},I_{\End w})|\llG1$.
\end{lem}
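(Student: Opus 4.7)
The plan is to use Lemma \ref{lem:distortion_decomposition}'s explicit formula for the distortion factor, namely
\[
\alpha(\gamma_{w'}, I_{\End w}) = \log \frac{\gamma_{w'}^{-1}(\infty) - y}{\gamma_{w'}^{-1}(\infty) - x},
\]
where $I_{\End w} = [x,y]$, and then to show that the point $x_\ast := \gamma_{w'}^{-1}(\infty)$ is bounded away from $I_{\End w}$ by a constant depending only on $\Gamma$, while $I_{\End w}$ itself has bounded length.

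The first step is to locate $x_\ast$. Writing $w = (a_1,\ldots,a_m)$ and $w' = (a_1,\ldots,a_{m-1})$, we have $\gamma_{w'}^{-1} = \gamma_{\mirror{w'}}$, where $\mirror{w'} = (\mirror{a_{m-1}},\ldots,\mirror{a_1})$. Since $w$ is reduced, so is $\mirror{w'}$, and the Schottky relation $\gamma_a(\Hyp - D_{\mirror a}) \subset \cl D_a$ iterated along $\mirror{w'}$ (using at each step that consecutive letters of a reduced word are not mirrors of each other, so the relevant disks have disjoint closures) shows that $\gamma_{\mirror{w'}}$ sends $\overline{\Hyp} \setminus D_{a_1}$ into $\cl D_{\mirror{a_{m-1}}}$. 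Since $\infty \notin D_{a_1}$ (the disks are bounded), we conclude $x_\ast \in \cl I_{\mirror{a_{m-1}}}$.

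The second step is the separation estimate. Reducedness of $w$ gives $\mirror{a_{m-1}} \neq a_m$, so $\cl D_{\mirror{a_{m-1}}}$ and $\cl D_{a_m}$ are disjoint closed disks in $\Sc$; let
\[
\delta_\Gamma := \min_{a \neq b} \operatorname{dist}(\cl D_a, \cl D_b) > 0.
\]
Then $\operatorname{dist}(x_\ast, I_{\End w}) \geq \operatorname{dist}(\cl I_{\mirror{a_{m-1}}}, \cl I_{a_m}) \geq \delta_\Gamma$, so both $|x_\ast - x|$ and $|x_\ast - y|$ are at least $\delta_\Gamma$. Combined with Lemma \ref{lem:uniformBoundIw}, which gives $|y - x| \leq L_\Gamma$, the triangle inequality yields
\[
\frac{|x_\ast - y|}{|x_\ast - x|} \leq 1 + \frac{L_\Gamma}{\delta_\Gamma},
\]
and symmetrically for the reciprocal. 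Taking logarithms gives $|\alpha(\gamma_{w'}, I_{\End w})| \leq \log(1 + L_\Gamma/\delta_\Gamma) \llG 1$.

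No step is genuinely hard; the only subtlety is pinning down why $x_\ast$ really lands in $\cl D_{\mirror{a_{m-1}}}$ rather than some bigger set, which requires carefully tracking how the Schottky ping-pong inclusions propagate under $\gamma_{\mirror{w'}}$ and using reducedness at every step. Once this localization is in hand, the two ingredients $\operatorname{dist}(x_\ast, I_{\End w}) \gtrsim_\Gamma 1$ and $|I_{\End w}| \lesssim_\Gamma 1$ close the argument immediately.
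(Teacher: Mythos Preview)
Your proof is correct and follows essentially the same route as the paper's: locate $x_\ast=\gamma_{w'}^{-1}(\infty)$ inside $I_{\mirror{a_{m-1}}}$ (the paper does this via the one-line identity $\gamma_{w'}(I_{\mirror{w'}})=\partial_\infty\Hyp-\cl I_{a_1}$, you via iterating the ping-pong inclusions), then use that $\mirror{a_{m-1}}\neq a_m$ together with the pairwise disjointness of the closed Schottky disks to bound the ratio in the formula for $\alpha$. Your final step is in fact slightly more explicit than the paper's, producing the concrete constant $\log(1+L_\Gamma/\delta_\Gamma)$.
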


\begin{proof}
We write $w=(a_{1},\ldots,a_{n})$. Note that 
\[
\gamma_{w'}(I_{\mirror{w'}})=\gamma_{a_{1}}(I_{\mirror{a_{1}}})=\partial_{\infty}\Hyp-\cl I_{a_{1}},
\]
so $x_{w}:=\gamma_{w'}\inv(\infty)$ lies in $I_{\mirror{w'}}$. In
particular $x_{w}\in I_{\mirror{a_{n-1}}}$. Since $w$ is reduced,
$\mirror{a_{n-1}}\neq a_{n}$. The closures of the $(I_{a})_{a\in\Ac}$
are pairwise disjoint, so $|x-y|\llG1$ whenever $x\in I_{a},y\in I_{b}$
and $a\neq b$. The result follows from this observation and the formula
(\ref{eq:def_distortion_factor}) for $\alpha(\gamma_{w'},I_{a_{n}})$. 
\end{proof}
For any $w\in\neWords$ we denote $\gamma_{w'}'(z)$ by $c_{w}(z)$.
The next result is \cite[Lemma 3.2]{magee_explicit_2020}. %

\begin{lem}
\label{lem:DerVSBoundaryLength}Let $\Gamma$ be a Fuchsian Schottky
group. For any $w\in\neWords$ and any $z\in D_{\End w}$ we have

\[
|c_{w}(z)|\asymG\len w.
\]
\end{lem}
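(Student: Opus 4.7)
The plan is to compute $|c_w(z)|$ by plugging $\gamma_{w'}$ into the distortion decomposition of Lemma \ref{lem:distortion_decomposition} and reading off the derivative. First dispose of the case $|w|=1$: there $w'=\emptyset$, $\gamma_{w'}=I$, so $c_w\equiv 1$, and $|I_w|=|I_{\End w}|\asymG 1$ since $\End w$ ranges over the finite set $\Ac$.

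For $|w|\geq 2$, write $w=(a_{1},\dots,a_{n})$. I would first verify that $\gamma_{w'}$ restricts to a holomorphic bijection $D_{\End w} \to D_{w}$ mapping the diameter $I_{\End w}$ onto the diameter $I_{w}$: arguing as in the proof of Lemma \ref{lem:distortion_bound}, $\gamma_{w'}\inv(\infty) \in I_{\mirror{a_{n-1}}}$, and reducedness gives $\mirror{a_{n-1}} \neq \End w$, so $\gamma_{w'}\inv(\infty)$ lies outside $\cl I_{\End w}$, and since $D_{\End w}\cap\RR = I_{\End w}$ also outside $D_{\End w}$. Lemma \ref{lem:distortion_decomposition} then yields $\gamma_{w'} = \pm\, T_{I_{w}}\circ g_{\alpha}\circ T_{I_{\End w}}\inv$ with $\alpha = \alpha(\gamma_{w'}, I_{\End w})$. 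The affine maps $T_{J}(\zeta)=|J|\zeta + x_{J}$ have constant derivative $|J|$, so the chain rule gives
\[
|c_{w}(z)| \;=\; \frac{|I_{w}|}{|I_{\End w}|}\cdot\bigl|g_{\alpha}'\bigl(T_{I_{\End w}}\inv(z)\bigr)\bigr|.
\]

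To finish I need the last factor to be $\asymG 1$. Reading \eqref{eq:def_g_alfa} off, one has $|g_{\alpha}'(\zeta)| = |(e^{\alpha/2}-e^{-\alpha/2})\zeta + e^{-\alpha/2}|^{-2}$, which must be bounded on $\hat D := T_{I_{\End w}}\inv(D_{\End w}) = \{|\zeta-1/2|<1/2\}$ (the same disk for every $w$, since $T_{I_{\End w}}$ sends $[0,1]$ to $I_{\End w}$ and so sends the disk of diameter $[0,1]$ to $D_{\End w}$). By Lemma \ref{lem:distortion_bound}, $|\alpha|\llG 1$, so this reduces to a compactness argument: the unique zero of the denominator is $\zeta = -1/(e^{\alpha}-1)$, which is real and either negative (for $\alpha>0$) or $>1$ (for $\alpha<0$), hence stays bounded away from $\cl\hat D$ uniformly in $|\alpha|\leq M$ for any fixed $M$. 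Thus $|g_{\alpha}'(T_{I_{\End w}}\inv(z))|\asymG 1$, and together with $|I_{\End w}|\asymG 1$ this yields $|c_{w}(z)|\asymG|I_{w}|$. The only substantive step is this uniform bound on $g_{\alpha}'$; everything else is bookkeeping built on the decomposition and the reducedness of $w$.
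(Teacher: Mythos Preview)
Your argument is correct. The paper itself does not prove this lemma but simply cites \cite[Lemma 3.2]{magee_explicit_2020}; a commented-out remark in the source notes that the statement follows easily from Lemmas~\ref{lem:distortion_decomposition} and~\ref{lem:distortion_bound} for $z\in I_{\End w}$, which is precisely the route you take, extended to all of $D_{\End w}$.

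One small point worth making explicit in your compactness step: the claim that the pole $-1/(e^{\alpha}-1)$ stays uniformly away from $\cl\hat D$ is not by itself enough, since as $\alpha\to 0$ the pole escapes to $\pm\infty$ while the leading coefficient $e^{\alpha/2}-e^{-\alpha/2}$ vanishes. The clean way to phrase it is that $(\alpha,\zeta)\mapsto (e^{\alpha/2}-e^{-\alpha/2})\zeta+e^{-\alpha/2}$ is continuous and nowhere zero on the compact set $[-M,M]\times\cl\hat D$ (equal to $1$ when $\alpha=0$, and nonvanishing for $\alpha\neq 0$ by your pole computation), hence bounded above and below by positive constants there. You gesture at this with ``compactness argument'', so this is a clarification rather than a correction.
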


Next we restate \cite[(7), p.749]{bourgain_fourier_2017}, which implies
that the length of the intervals $I_{w}$ decreases at least exponentially
in $\wlength w$. 
\begin{lem}
\label{lem:contractionOnBoundary}For any Fuchsian Schottky group
$\Gamma$ there is a constant $\mathtt{c}_{\Gamma}\in(0,1)$ such
that

\[
\len w\leq\mathtt{c}_{\Gamma}\len{w'}
\]
 for all $w\in\Words_{\geq2}$. 
\end{lem}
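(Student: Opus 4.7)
The plan is to express both $|I_w|$ and $|I_{w'}|$ as integrals of $|\gamma_{w''}'|$ over nested boundary intervals and to exploit the uniform derivative estimate of Lemma \ref{lem:DerVSBoundaryLength}. Write $w = (a_1, \ldots, a_n) \in \Words_{\geq 2}$ and set $a = a_n$, $b = a_{n-1}$, and $w'' = (a_1, \ldots, a_{n-2})$ (so $\gamma_{w''} = I$ when $n = 2$). Since $w$ is reduced, $a \neq \mirror{b}$; combining (\ref{eq:def_Schottky}) with the disjointness of $\cl D_a$ and $\cl D_{\mirror{b}}$ then yields $\gamma_b(\cl D_a) \subset D_b$, so $I_{(b,a)} \subsetneq I_b$. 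Applying $\gamma_{w''}$, which preserves $\RR$, gives $I_w = \gamma_{w''}(I_{(b,a)})$ and $I_{w'} = \gamma_{w''}(I_b)$.

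Let $r_0 := \max\{|I_{(b,a)}|/|I_b| : (b,a) \in \Ac^2,\ a \neq \mirror{b}\}$. The maximum is taken over a finite set of strictly positive ratios, each strictly less than $1$ by the inclusion above, so $r_0 \in (0, 1)$. For $n = 2$, $\gamma_{w''} = I$ and the ratio $|I_w|/|I_{w'}|$ is one of those appearing in $r_0$, so the bound is immediate. For $n \geq 3$, I would use
\[
|I_{w'}| - |I_w| = \int_{I_b \setminus I_{(b,a)}} |\gamma_{w''}'(z)|\, dz
\]
together with Lemma \ref{lem:DerVSBoundaryLength} applied to $w'$, which gives $|\gamma_{w''}'(z)| \asymG |I_{w'}|$ for every $z \in D_b$. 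Letting $D_\Gamma \geq 1$ denote the resulting $\Gamma$-upper bound on $\sup_{D_b}|\gamma_{w''}'| / \inf_{D_b}|\gamma_{w''}'|$, one gets
\[
\frac{|I_{w'}| - |I_w|}{|I_{w'}|} \;\geq\; \frac{\inf_{D_b}|\gamma_{w''}'|}{\sup_{D_b}|\gamma_{w''}'|} \cdot \frac{|I_b| - |I_{(b,a)}|}{|I_b|} \;\geq\; \frac{1 - r_0}{D_\Gamma},
\]
yielding $|I_w| \leq \mathtt{c}_{\Gamma}\, |I_{w'}|$ with $\mathtt{c}_{\Gamma} := 1 - (1 - r_0)/D_\Gamma \in (0, 1)$.

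The main subtlety I expect is avoiding the naive estimate $|I_w|/|I_{w'}| \leq D_\Gamma\, r_0$ that would come from applying Lemma \ref{lem:DerVSBoundaryLength} as a two-sided bound on $|\gamma_{w''}'|$ in both numerator and denominator: the product $D_\Gamma\, r_0$ need not be less than $1$. The fix is to instead bound the \emph{difference} $|I_{w'}| - |I_w|$ from below, pairing the lower bound on $|\gamma_{w''}'|$ on $I_b \setminus I_{(b,a)}$ against the upper bound on $I_b$. In this way the distortion only enters as a factor $1/D_\Gamma$ multiplying the strictly positive gap $1 - r_0$, guaranteeing that the resulting constant $\mathtt{c}_{\Gamma}$ is less than $1$.
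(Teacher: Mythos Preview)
Your argument is correct. The key steps---writing $I_w=\gamma_{w''}(I_{(b,a)})$ and $I_{w'}=\gamma_{w''}(I_b)$, then using Lemma~\ref{lem:DerVSBoundaryLength} to obtain a uniform bound $D_\Gamma$ on the distortion ratio of $\gamma_{w''}'$ over $D_b$---are all valid, and your observation that one should bound the \emph{complement} $|I_{w'}|-|I_w|$ from below (rather than the ratio $|I_w|/|I_{w'}|$ from above) is exactly the right move to produce a contraction constant strictly less than~$1$.

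As for comparison: the paper does not actually supply a proof of this lemma---it is simply quoted as \cite[(7), p.~749]{bourgain_fourier_2017}. So there is no ``paper's own proof'' to compare against. Your argument is a clean, self-contained derivation from Lemma~\ref{lem:DerVSBoundaryLength}, which in the paper's logical order precedes the present lemma, so there is no circularity. One minor cosmetic point: you only need the sup and inf of $|\gamma_{w''}'|$ over the real interval $I_b$ (rather than all of $D_b$) for the integral estimate, but since $I_b\subset D_b$ this is immaterial.
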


The next result gives the relation between the norm of a $\gamma\in\Gamma$
and the length of its corresponding interval in $\RR$.
\begin{lem}
\label{lem:normVsBoundaryLength}Let $\Gamma$ be a Fuchsian Schottky
group. For any $w\in\neWords$ we have

\[
||\gamma_{w}||\asymG|I_{w}|^{-\frac{1}{2}}.
\]
\end{lem}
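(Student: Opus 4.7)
My plan is to apply the Iwasawa-style decomposition of Lemma \ref{lem:distortion_decomposition} to $\gamma_{w'}$, which by definition sends $I_{\End w}$ to $I_{w}$. For $w \in \Words_{\geq 2}$ this reads
\[
\gamma_{w'} = R\, T_{I_{w}}\, g_{\alpha}\, T_{I_{\End w}}^{-1},
\]
with $R \in \{I,-I\}$ and $\alpha = \alpha(\gamma_{w'},I_{\End w})$. Lemma \ref{lem:distortion_bound} gives $|\alpha| \llG 1$, hence $\norm{g_{\alpha}^{\pm 1}} \asymG 1$ from the explicit form \eqref{eq:def_g_alfa}.

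Since the alphabet $\Ac$ is finite, both $|I_{\End w}|$ and the left endpoint of $I_{\End w}$ range over a finite set of positive, bounded values, so a direct computation from \eqref{eq:def_TJ} gives $\norm{T_{I_{\End w}}^{\pm 1}} \asymG 1$. For the leading factor $T_{I_{w}}$ I would diagonalize $T_{J}^{*}T_{J}$ with $J = I_{w}$: its trace is $|J| + (x_{J}^{2}+1)/|J|$ and its determinant equals $1$, so whenever $|J|$ is bounded above and $|x_{J}|$ is bounded, the largest singular value is $\asymG |J|^{-1/2}$. Both conditions hold here because $I_{w} \subset I_{\Start w}$ lies in a fixed compact subset of $\RR$ and $\len{w} \leq \maxWL{\Gamma}$ by Lemma \ref{lem:uniformBoundIw}. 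Thus $\norm{T_{I_{w}}^{\pm 1}} \asymG \len{w}^{-1/2}$.

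Submultiplicativity of the operator norm immediately yields $\norm{\gamma_{w'}} \llG \len{w}^{-1/2}$. For the reverse inequality I would isolate $T_{I_{w}} = R^{-1}\gamma_{w'} T_{I_{\End w}} g_{\alpha}^{-1}$ in the decomposition, obtaining $\norm{T_{I_{w}}} \leq \norm{\gamma_{w'}}\,\norm{T_{I_{\End w}}}\,\norm{g_{\alpha}^{-1}} \asymG \norm{\gamma_{w'}}$, and therefore $\norm{\gamma_{w'}} \ggG \len{w}^{-1/2}$. Writing $\gamma_{w} = \gamma_{w'}\gamma_{\End w}$ and absorbing $\norm{\gamma_{\End w}^{\pm 1}} \asymG 1$ into the implied constants produces $\norm{\gamma_{w}} \asymG \len{w}^{-1/2}$ for all $w \in \Words_{\geq 2}$; the case $|w|=1$ follows from the finiteness of $\Ac$, since then both sides of the claimed asymptotic take only finitely many positive values indexed by the generators. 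I do not expect a genuine obstacle: the argument is essentially careful bookkeeping of matrix norms in a triangular decomposition, with the only nontrivial input being the uniform control on the distortion factor supplied by Lemma \ref{lem:distortion_bound}.
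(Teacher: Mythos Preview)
Your proposal is correct and follows essentially the same route as the paper: apply the decomposition of Lemma~\ref{lem:distortion_decomposition} to $\gamma_{w'}$, use Lemma~\ref{lem:distortion_bound} to control $g_{\alpha}$, absorb the finitely many $T_{I_{\End w}}^{\pm1}$ and $\gamma_{\End w}^{\pm1}$ into the constants, and read off $\norm{T_{I_w}}\asymG\len{w}^{-1/2}$ from the explicit formula. The paper packages the two-sided estimate by passing through an auxiliary submultiplicative norm and first restricting to words with $\len{w}<1$, whereas you obtain each direction directly via submultiplicativity and a singular-value computation; both arguments are the same bookkeeping.
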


\begin{proof}
By Lemma \ref{lem:contractionOnBoundary} there is $N=N_{\Gamma}\geq2$
such that $\len w<1$ for any $w\in\Words_{\geq N}$. It suffices
to prove the result for any such $w$. First we show that $\norm{\gamma_{w}}\asymG\norm{\gamma_{w'}}$.
We write $w=(a_{1},\ldots,a_{n})$. Consider an auxiliary multiplicative
norm $\norm{\cdot}_{m}$ on the space of $M_{2}(\RR)$ of $2\times2$
real matrices. Any two norms on $M_{2}(\RR)$ are equivalent, so

\[
\norm{\gamma_{w}}\asymp\norm{\gamma_{w'}\gamma_{a_{n}}}_{m}=\norm{\gamma_{w'}}_{m}\norm{\gamma_{a_{n}}}_{m}\asymG\norm{\gamma_{w'}}.
\]
Since $\gamma_{w'}(I_{a_{n}})=I_{w}$, by Lemma \ref{lem:distortion_decomposition}
we have

\[
\gamma_{w'}=\pm IT_{I_{w}}g_{\alpha}T_{I_{a_{n}}}\inv,
\]
where $\alpha=\alpha(\gamma_{w'},I_{a_{n}})$. Using the auxiliary
norm $\norm{\cdot}_{m}$ once more we get

\[
\norm{\gamma_{w'}}\asymp\norm{T_{I_{w}}}\,\norm{g_{\alpha}}\:\norm{T_{I_{a_{n}}}\inv}.
\]
Lemma \ref{lem:distortion_bound} says that $|\alpha|\llG1$, so we
see that $\norm{g_{\alpha}}\asymG1$ from (\ref{eq:def_g_alfa}).
We also have $\norm{T_{I_{a_{n}}}\inv}\asymG1$ since 
\[
\left\{ \norm{T_{I_{a}}\inv}:a\in\Ac\right\} 
\]
is a finite set of strictly positive real numbers. Thus $\norm{\gamma_{w'}}\asymG\norm{T_{I_{w}}}$.
Finally, since $\len w<1$, from (\ref{eq:def_TJ}) we see that $\norm{T_{I_{w}}}\asymG\len w^{-1/2}$,
which completes the proof. 
\end{proof}
We will use repeatedly the fact that the length of the intervals $I_{w}$
behaves well with respect to the concatenation of words. The statement
below is a mild variation of \cite[Lemma 2.7]{bourgain_fourier_2017}.
\begin{lem}
\label{lem:concatenationEstimate}Let $\Gamma$ be a Fuchsian Schottky
group. For any $A,B\in\neWords$ such that $A\to B$ we have

\[
\len{AB}\asymp_{\Gamma}\len A\len B.
\]
\end{lem}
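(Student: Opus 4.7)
The plan is to realize $I_{AB}$ as the image of $I_B$ under the M\"obius transformation $\gamma_A$ and then convert this into a length identity via the mean value theorem and Lemma \ref{lem:DerVSBoundaryLength} (which in our notation says $|\gamma_{w'}'(z)| \asymG |I_w|$ whenever $z \in D_{\End{w}}$). A small amount of boundary bookkeeping at the ``junction letter'' $(\End{A},\Start{B})$ finishes the job.

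First I would unpack the definitions to establish the identity $I_{AB} = \gamma_A(I_B)$. Writing $A = (a_1,\ldots,a_n)$ and $B = (b_1,\ldots,b_m)$, one has $(AB)' = A B'$ and $\End{(AB)} = b_m$, hence
\[
D_{AB} = \gamma_{A B'}(D_{\End{B}}) = \gamma_A \gamma_{B'}(D_{\End{B}}) = \gamma_A(D_B).
\]
Intersecting with $\RR$ (using that $\gamma_A \in \SL(2,\RR)$ preserves $\RR$ and that $D_B$ and $\gamma_A(D_B)$ are bounded disks with real diameters) gives $I_{AB} = \gamma_A(I_B)$.

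Second I would apply the mean value theorem to the restriction of $\gamma_A$ to $I_B$, producing some $\xi \in I_B$ with $|I_{AB}| = |\gamma_A'(\xi)|\,|I_B|$. The hypothesis $A \to B$ means $\mirror{a_n} \neq b_1$, so the word $w := A b_1$ is reduced with $w' = A$ and $\End{w} = b_1$, and $\xi \in I_B \subset I_{b_1} \subset D_{b_1}$. Lemma \ref{lem:DerVSBoundaryLength} applied to $w$ then gives $|\gamma_A'(\xi)| = |c_w(\xi)| \asymG |I_{A b_1}|$, whence
\[
|I_{AB}| \asymG |I_{A b_1}|\,|I_B|.
\]

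Finally I would verify $|I_{A b_1}| \asymG |I_A|$. Write $I_A = \gamma_{A'}(I_{a_n})$ and $I_{A b_1} = \gamma_{A'}(I_{a_n b_1})$, where $I_{a_n b_1} \subset I_{a_n} \subset D_{a_n}$. Lemma \ref{lem:DerVSBoundaryLength} applied to $A$ itself yields $|\gamma_{A'}'(z)| \asymG |I_A|$ uniformly for $z \in D_{a_n}$, so the mean value theorem gives $|I_{A b_1}| \asymG |I_A|\,|I_{a_n b_1}|$. Since $(a_n,b_1)$ ranges over the finite set of reduced two-letter words, $|I_{a_n b_1}|$ takes only finitely many strictly positive values and is therefore $\asymG 1$, closing the argument. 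The only real pitfall I anticipate is the notational one of choosing, at each application of Lemma \ref{lem:DerVSBoundaryLength}, the auxiliary word $w$ whose prefix $w'$ produces the M\"obius map whose derivative we want to estimate, and checking that the point of evaluation indeed lies in $D_{\End{w}}$; everything else is a routine consequence of the Schottky set-up.
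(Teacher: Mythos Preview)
Your argument is correct. The paper's proof takes a slightly different route: it quotes two lemmas from Bourgain--Dyatlov, namely $|I_{w_1'}|\asymG|I_{w_1}|$ for $w_1\in\Words_{\geq2}$ and $|I_{w_1'w_2}|\asymG|I_{w_1}||I_{w_2}|$ when $w_1\rightsquigarrow w_2$, and then instantiates them with $w_1=A\Start{B}$, $w_2=B$. Your proof is essentially a self-contained derivation of exactly those two facts from Lemma~\ref{lem:DerVSBoundaryLength} via the mean value theorem: your steps 1--3 recover the second Bourgain--Dyatlov estimate (giving $|I_{AB}|\asymG|I_{Ab_1}||I_B|$), and your step 4 recovers the first (giving $|I_{Ab_1}|\asymG|I_A|$). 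The advantage of your approach is that it is entirely internal to the paper's toolkit and makes the dependence on the Schottky combinatorics transparent; the paper's version is shorter on the page at the cost of an external citation.
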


\begin{proof}
We use two lemmas of Bourgain and Dyatlov: \cite[Lemma 2.6]{bourgain_fourier_2017}
says that $\len{w_{1}'}\asymG\len{w_{1}}$ for any $w_{1}\in\Words_{\geq2}$.
Also, $\len{w_{1}'w_{2}}\asymG\len{w_{1}}\len{w_{2}}$ when $w_{1}\rightsquigarrow w_{2}$
by \cite[Lemma 2.7]{bourgain_fourier_2017}. Applying these to $w_{1}=A\Start B$
and $w_{2}=B$ we get
\[
\len{AB}\asymG\len{w_{1}}\len B\asymG\len A\len B,
\]
so we are done.
\end{proof}
The mirror estimate below is \cite[Lemma 2.8]{bourgain_fourier_2017}.
\begin{lem}
\label{lem:mirrorEstimate}Let $\Gamma$ be a Fuchsian Schottky group.
For any $w\in\neWords$ we have

\[
\len{\mirror w}\asymp_{\Gamma}\len w.
\]
\end{lem}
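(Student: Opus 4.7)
The plan is to reduce the mirror estimate to Lemma~\ref{lem:normVsBoundaryLength}, which has already packaged all of the hard distortion and derivative analysis into a clean equivalence $\norm{\gamma_w}\asymG|I_w|^{-1/2}$. Once that is available, the mirror estimate becomes essentially a one-line linear-algebra fact about $\SL(2,\RR)$.

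The key steps, in order, are as follows. First, I would recall the identity $\gamma_{\mirror{w}} = \gamma_w^{-1}$, which follows from the generator-level relation $\gamma_{\mirror{a}} = \gamma_a^{-1}$ together with the definition of word reversal, and was already noted in Subsection~\ref{subsec:Fuchsian-Schottky-groups}. Next, I would apply Lemma~\ref{lem:normVsBoundaryLength} twice to write
\[
|I_w|^{-1/2} \asymG \norm{\gamma_w} \quad \text{and} \quad |I_{\mirror{w}}|^{-1/2} \asymG \norm{\gamma_{\mirror{w}}} = \norm{\gamma_w^{-1}}.
\]
Finally, I would invoke the elementary fact that for any $g\in\SL(2,\RR)$ one has $\norm{g^{-1}}=\norm{g}$ in the operator norm: since $\det(g^{T}g)=1$, the singular values of $g$ form a reciprocal pair $(\sigma,\sigma^{-1})$ with $\sigma\geq 1$, and the singular values of $g^{-1}$ are the same pair, so $\norm{g}=\sigma=\norm{g^{-1}}$. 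Combining these three ingredients yields $|I_w|\asymG|I_{\mirror{w}}|$.

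The main obstacle is essentially none: the whole statement is a corollary of Lemma~\ref{lem:normVsBoundaryLength} together with the symmetry of norms on $\SL(2,\RR)$. The one point deserving care is the compatibility of norms—Lemma~\ref{lem:normVsBoundaryLength} is proven via an auxiliary multiplicative norm and equivalence of norms, so one should invoke the identity $\norm{g^{-1}}=\norm{g}$ with a norm for which it genuinely holds (e.g.\ the Euclidean operator norm). Since all matrix norms on $M_{2}(\RR)$ are equivalent, this costs only a $\Gamma$-independent multiplicative constant and preserves the relation $\asymG$.
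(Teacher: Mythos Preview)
Your argument is correct and non-circular: Lemma~\ref{lem:normVsBoundaryLength} is proved before the mirror estimate and does not rely on it, and the identity $\norm{g^{-1}}=\norm{g}$ for $g\in\SL(2,\RR)$ is genuine (in fact it holds exactly for the Frobenius norm used elsewhere in the paper, since $g^{-1}=\begin{pmatrix} d & -b\\ -c & a\end{pmatrix}$ has the same entries up to sign, so your caveat about norm equivalence is not even needed).

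The paper itself does not prove this lemma; it simply cites it as \cite[Lemma 2.8]{bourgain_fourier_2017}. Bourgain and Dyatlov's argument there is different in spirit: they work directly with the derivative/distortion picture, writing $|I_w|\asymp |c_w(z)|$ and comparing $c_w$ with $c_{\mirror{w}}$ via the chain rule and the uniform distortion bounds. Your route through the matrix norm is arguably cleaner once Lemma~\ref{lem:normVsBoundaryLength} is available, since it replaces an analytic comparison of derivatives by a one-line algebraic symmetry; the Bourgain--Dyatlov approach, on the other hand, stays entirely within the interval-length/derivative formalism and does not need the norm lemma.
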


We call \emph{partition} of $\neWords$ any finite subset $\Pc$ of
$\neWords$ for which there is an integer $N>0$ such that any $w\in\Words_{\geq N}$
has a unique suffix in $\Pc$. 
\begin{lem}
\label{lem:sumLengthPartitionToDelta}Let $\Gamma$ be a Fuchsian
Schottky group. For any partition $\Pc$ of $\neWords$ we have

\[
\sum_{w\in\Pc}\len w^{\gexp}\asymG1.
\]
\end{lem}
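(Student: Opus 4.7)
My plan is to reduce the general partition to the special case $\Pc = \Words_m$, where the bound $\sum_{w \in \Words_m} |I_w|^{\gexp} \asymG 1$ is the standard consequence of Patterson--Sullivan theory for convex cocompact groups. The reduction will use only the concatenation and mirror estimates already established.

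The first step would be to prove a letter-refined uniform bound. I would introduce the Patterson--Sullivan measure $\mu$ of $\Gamma$: a Borel probability measure on $\Lambda_\Gamma$ for which Sullivan's shadow lemma, combined with Lemma \ref{lem:normVsBoundaryLength} to translate hyperbolic displacement into interval length, gives
$$\mu(I_w) \asymG |I_w|^{\gexp} \quad \text{for every } w \in \neWords.$$
The Schottky identity (\ref{eq:def_Schottky}) shows inductively that, for each $a \in \Ac$, the family $\{I_w : w \in \Words_n,\ \Start w = a\}$ partitions $\Lambda_\Gamma \cap I_a$ up to a $\mu$-null boundary. Summing $\mu$ across this partition and using $\mu(\Lambda_\Gamma \cap I_a) \asymG 1$ then yields
$$\sum_{w \in \Words_n,\ \Start w = a} |I_w|^{\gexp} \asymG 1$$
uniformly in $n \geq 1$. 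The mirror estimate (Lemma \ref{lem:mirrorEstimate}) transfers this to the analogous bound with $\End w = b$ in place of $\Start w = a$, for each $b \in \Ac$.

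For the general case, let $N_0$ be the integer associated to $\Pc$ in the definition of partition and set $L := \max_{p \in \Pc} |p|$. For any $n \geq N_0 + L$, every $w \in \Words_n$ decomposes uniquely as $w = up$ with $p \in \Pc$ its unique suffix and $u \in \Words_{n - |p|}$ nonempty satisfying $u \to p$, so Lemma \ref{lem:concatenationEstimate} gives
$$\sum_{w \in \Words_n} |I_w|^{\gexp} \asymG \sum_{p \in \Pc} |I_p|^{\gexp} \sum_{u \in \Words_{n - |p|},\ \End u \neq \mirror{\Start p}} |I_u|^{\gexp}.$$
Breaking the inner sum by the value of $\End u$ and invoking the end-letter bound from the previous step shows it is $\asymG 1$; summing the start-letter bound over $a \in \Ac$ shows the left-hand side is also $\asymG 1$. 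Dividing will yield $\sum_{p \in \Pc} |I_p|^{\gexp} \asymG 1$.

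The hard part will be the shadow-lemma input $\mu(I_w) \asymG |I_w|^{\gexp}$. While standard for convex cocompact Fuchsian groups, it is not explicitly developed in Section \ref{sec:Preliminaries}; a self-contained alternative would be to invoke the Ruelle--Perron--Frobenius theorem for the transfer operator of the Schottky symbolic dynamics, whose leading eigenvalue at exponent $\gexp$ equals $1$. The rest of the argument is routine bookkeeping with Lemmas \ref{lem:concatenationEstimate} and \ref{lem:mirrorEstimate}.
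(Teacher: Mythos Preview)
Your argument is correct but takes a longer route than necessary. The key input you identify---the Patterson--Sullivan estimate $\mu(I_w)\asymG|I_w|^{\gexp}$---is exactly what the paper invokes (citing Lemma~2.11 of Bourgain--Dyatlov). That estimate holds for \emph{every} $w\in\neWords$, not only for $w\in\Words_m$, so the paper applies it directly to $\Pc$: the intervals $(I_w)_{w\in\Pc}$ are pairwise disjoint and cover $\Lambda_\Gamma$, hence $1=\sum_{w\in\Pc}\mu(I_w)\asymG\sum_{w\in\Pc}|I_w|^{\gexp}$, and the lemma follows in one line. Your reduction to $\Words_m$ via the concatenation and mirror estimates is therefore superfluous---once you have $\mu(I_w)\asymG|I_w|^{\gexp}$ for all $w$, there is nothing left to reduce. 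The one thing your approach buys is that it uses the defining suffix property of $\Pc$ purely combinatorially (through the unique decomposition $w=up$), bypassing the geometric assertion that the cylinder intervals of $\Pc$ partition the limit set.
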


\begin{proof}
Consider a Patterson measure $\mu$ of $\Gamma$\footnote{Nowadays we refer to these as \emph{Patterson-Sullivan measures}.
Patterson introduced them in \cite{patterson_limit_1976} for Fuchsian
groups, and Sullivan extended the construction to discrete subgroups
of isometries of the real hyperbolic space of any dimension in \cite{sullivan_density_1979}.}. Recall that $\mu$ is a probability measure on $\partial\:\Hyp$
whose support is the limit set of $\Gamma$. By \cite[Lemma 2.11, p. 755]{bourgain_fourier_2017}
we have 
\begin{equation}
\mu(\len w)\asymG\len w^{\gexp}\label{eq:ls1}
\end{equation}
 for any $w\in\neWords$. Since $\Pc$ is a partition, the intervals
$(I_{w})_{w\in\Pc}$ are pairwise disjoint and cover $\Lambda_{\Gamma}$,
hence

\begin{equation}
1=\sum_{w\in\Pc}\mu(I_{w}).\label{eq:ls2}
\end{equation}
The result follows from (\ref{eq:ls1}) and (\ref{eq:ls2}).
\end{proof}
For any $\tau>0$ we define

\begin{equation}
\Bc(\tau)=\{w\in\Words_{\geq2}\mid\len{\mirror w}\leq\tau<\len{\mirror w'}\}.\label{eq:defBtau}
\end{equation}
Note that $\Bc(\tau)$ is nonempty if and only if $\tau<\maxWL{\Gamma}$.
Although $\Bc(\tau)$ is not necessarily a partition, its mirror set
is. This fact follows easily from Lemma \ref{lem:contractionOnBoundary}.
We state it below for ease of reference.
\begin{lem}
\label{lem:BtauIsPartition}Let $\Gamma$ be a Fuchsian Schottky group.
Then $\mirror{\Bc(\tau)}$ is a partition for any $\tau\in(0,\maxWL{\Gamma})$.
\end{lem}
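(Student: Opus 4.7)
The plan is to turn the abstract membership condition for $\mirror{\Bc(\tau)}$ into a concrete monotone-crossing statement, and then use Lemma \ref{lem:contractionOnBoundary} to pin down a unique crossing point.

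First I would unwind the definition. Writing $v=\mirror w$ and combining Lemma \ref{lem:mirrorEstimate} with the elementary fact that the mirror involution swaps ``delete last letter'' and ``delete first letter'', the defining inequalities $|I_{\mirror w}|\leq\tau<|I_{\mirror w'}|$ translate into the clean requirement that $v\in\mirror{\Bc(\tau)}$ iff $|v|\geq 2$ and $v$ marks the unique step, along a natural one-letter-at-a-time chain of truncations of $v$, at which the interval length first crosses the threshold $\tau$ from strictly above to at most $\tau$.

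Next, for any sufficiently long reduced word $u$, Lemma \ref{lem:contractionOnBoundary} supplies the strict contraction $|I_w|\leq\mathtt{c}_\Gamma|I_{w'}|$ at every step along such a chain, so the interval lengths strictly decrease. Iterating this bound from the universal upper bound $\maxWL{\Gamma}$ of Lemma \ref{lem:uniformBoundIw} shows that the interval length drops below $\tau$ once the chain has length exceeding an explicit $N=N(\Gamma,\tau)$. Strict monotonicity then forces exactly one crossing inside any word of length $\geq N$, which identifies the unique sub-word of $u$ lying in $\mirror{\Bc(\tau)}$ required by the partition definition. The same length bound also shows that $\mirror{\Bc(\tau)}$ itself is finite.

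The only mildly delicate point is the first step: carefully tracking how the mirror operation interacts with one-letter truncation to obtain the clean monotone-crossing reformulation. This is routine symbol-pushing with Lemma \ref{lem:mirrorEstimate}, after which the rest is immediate from Lemma \ref{lem:contractionOnBoundary}, in line with the remark preceding the statement that the fact ``follows easily'' from that lemma.
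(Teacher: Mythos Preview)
There is a real gap in the contraction step. Lemma~\ref{lem:contractionOnBoundary} asserts $|I_w|\le\mathtt{c}_\Gamma|I_{w'}|$, where $w'$ deletes the \emph{last} letter of $w$. Your chain of truncations, however, deletes the \emph{first} letter at each step (this is exactly what your own observation ``the mirror swaps delete-last and delete-first'' produces). The lemma says nothing about that ratio, and in fact the sequence of lengths along a delete-first-letter chain need not be monotone: passing from $v$ to $av$ replaces $I_v$ by $\gamma_a(I_v)$, and for a general Schottky datum a single generator $\gamma_a$ can have Euclidean derivative exceeding $1$ on some $D_b$. Your appeal to Lemma~\ref{lem:mirrorEstimate} cannot repair this, since it only gives $|I_{\mirror w}|\asymp_\Gamma|I_w|$, far too coarse to preserve the sharp threshold inequality ``$\le\tau<$'' that is supposed to single out a \emph{unique} crossing.

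The remedy is to drop Lemma~\ref{lem:mirrorEstimate} from the translation step entirely. Setting $v=\mirror w$ and unwinding the definition exactly---no estimates needed---gives
\[
\mirror{\Bc(\tau)}=\{v\in\Words_{\geq2}:|I_v|\le\tau<|I_{v'}|\},
\]
with the honest last-letter deletion $v'$. Now the relevant monotone chain is the chain of \emph{prefixes} $p_1,p_2,\ldots$ of a long word: there $p_j'=p_{j-1}$, so Lemma~\ref{lem:contractionOnBoundary} applies verbatim to give $|I_{p_j}|\le\mathtt{c}_\Gamma|I_{p_{j-1}}|$, the lengths are genuinely strictly decreasing, and your unique-crossing argument (together with the iteration bound $|I_{p_j}|\le\mathtt{c}_\Gamma^{\,j-1}\maxWL{\Gamma}$ from Lemma~\ref{lem:uniformBoundIw}) goes through without change.
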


We also need the next part of \cite[Lemma 2.10]{bourgain_fourier_2017}:
\begin{lem}
\label{lem:boundaryLengthInBtau}Let $\Gamma$ be a Fuchsian Schottky
group and consider $\tau\in\text{(0,\ensuremath{\maxWL{\Gamma}}) }$.
Then $|I_{w}|\asymp_{\Gamma}\tau$ for any $w\in\Bc(\tau)\cup\mirror{\Bc(\tau)}$. 
\end{lem}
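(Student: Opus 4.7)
The plan is to reduce both cases to a single one via the mirror estimate, and then to bound $|I_{\mirror w}|$ for $w \in \Bc(\tau)$ directly from the definition.

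First I would invoke Lemma \ref{lem:mirrorEstimate} to get $|I_w| \asymp_\Gamma |I_{\mirror w}|$ for every $w \in \neWords$. Since mirroring is an involution, this reduces the statement to showing $|I_{\mirror w}| \asymp_\Gamma \tau$ for every $w \in \Bc(\tau)$: indeed, for $w \in \Bc(\tau)$ the mirror estimate then gives $|I_w| \asymp_\Gamma \tau$, and for $w = \mirror u \in \mirror{\Bc(\tau)}$ with $u \in \Bc(\tau)$ we get $|I_w| = |I_{\mirror u}| \asymp_\Gamma \tau$.

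The upper bound $|I_{\mirror w}| \leq \tau$ for $w \in \Bc(\tau)$ is immediate from the definition \eqref{eq:defBtau} of $\Bc(\tau)$. For the matching lower bound, write $w = (a_1, \ldots, a_n)$ with $n \geq 2$, so that $\mirror w = (\mirror{a_n}, \mirror{a_{n-1}}, \ldots, \mirror{a_1})$ factors as the concatenation $\mirror{a_n} \cdot \mirror{w'}$. Because $w$ is reduced, so is $\mirror w$, hence $\mirror{a_n} \to \mirror{w'}$, and Lemma \ref{lem:concatenationEstimate} yields
\[
|I_{\mirror w}| \asymp_\Gamma |I_{\mirror{a_n}}| \cdot |I_{\mirror{w'}}|.
\]
The factor $|I_{\mirror{a_n}}|$ is bounded below by $\min_{a \in \Ac} |I_a|$, a strictly positive constant depending only on $\Gamma$, while $|I_{\mirror{w'}}| > \tau$ by the second inequality in the definition of $\Bc(\tau)$. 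Combining these two facts gives $|I_{\mirror w}| \ggG \tau$, which together with the trivial upper bound closes the loop.

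There is no real obstacle here: the argument is essentially a bookkeeping exercise once one has the mirror estimate (Lemma \ref{lem:mirrorEstimate}), the concatenation estimate (Lemma \ref{lem:concatenationEstimate}), and the observation that the one-letter boundary lengths $|I_a|$ form a finite set of positive numbers. The only mildly subtle point is recognizing that the definition of $\Bc(\tau)$ is phrased in terms of the mirror word, so both the upper and the lower bound come naturally from \eqref{eq:defBtau} after one peels off the leading letter $\mirror{a_n}$ of $\mirror w$.
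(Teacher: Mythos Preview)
The paper does not give its own proof of this lemma; it simply records it as part of \cite[Lemma 2.10]{bourgain_fourier_2017}. Your argument is correct and is essentially the standard one: the upper bound $|I_{\mirror w}|\leq\tau$ is immediate from the definition of $\Bc(\tau)$, the lower bound follows by writing $\mirror w$ as a one-letter word concatenated with $\mirror{w'}$ and applying Lemma~\ref{lem:concatenationEstimate} together with the fact that $\min_{a\in\Ac}|I_a|>0$, and Lemma~\ref{lem:mirrorEstimate} transfers everything to $|I_w|$ and handles the $\mirror{\Bc(\tau)}$ case.
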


The next result is an estimate of the size of $\Bc(\tau)$.
\begin{lem}
\label{lem:sizeBtau}Let $\Gamma$ be a Fuchsian Schottky group and
let $\delta=\gexp$. For any $\tau\in(0,\maxWL{\Gamma})$ we have
$\#\Bc(\tau)\asymp_{\Gamma}\text{\ensuremath{\tau^{-\delta}}}$.
\end{lem}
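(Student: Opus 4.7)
The plan is to combine three facts already established: the mirror set $\widetilde{\mathcal{B}(\tau)}$ is a partition of $\neWords$ (Lemma \ref{lem:BtauIsPartition}), every partition has total $\delta$-th power length $\asymp_\Gamma 1$ (Lemma \ref{lem:sumLengthPartitionToDelta}), and every word in $\mathcal{B}(\tau) \cup \widetilde{\mathcal{B}(\tau)}$ has boundary interval of length $\asymp_\Gamma \tau$ (Lemma \ref{lem:boundaryLengthInBtau}). So the rough idea is: the boundary $\partial_\infty \Hyp$ (essentially the limit set, in a Patterson-measure sense) is tiled by $\#\widetilde{\mathcal{B}(\tau)}$ intervals of length $\asymp_\Gamma \tau$, each contributing $\asymp_\Gamma \tau^{\delta}$ to a sum that is $\asymp_\Gamma 1$.

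First I would apply Lemma \ref{lem:BtauIsPartition} to conclude that $\widetilde{\mathcal{B}(\tau)}$ is a partition, and then feed this into Lemma \ref{lem:sumLengthPartitionToDelta} to get
\[
\sum_{w \in \widetilde{\mathcal{B}(\tau)}} |I_w|^\delta \asymp_\Gamma 1.
\]
Next, for each $w \in \widetilde{\mathcal{B}(\tau)}$, Lemma \ref{lem:boundaryLengthInBtau} gives $|I_w| \asymp_\Gamma \tau$, so every term in the sum above is comparable to $\tau^\delta$. Therefore
\[
\#\widetilde{\mathcal{B}(\tau)} \cdot \tau^\delta \asymp_\Gamma 1.
\]
Finally, since the mirror map $w \mapsto \widetilde{w}$ is a bijection of $\neWords$, we have $\#\widetilde{\mathcal{B}(\tau)} = \#\mathcal{B}(\tau)$, and rearranging yields
\[
\#\mathcal{B}(\tau) \asymp_\Gamma \tau^{-\delta},
\]
which is the claim.

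There is not really a hard step here: the lemma is an immediate corollary of the preceding three lemmas combined. The only thing one has to be slightly careful about is that Lemma \ref{lem:sumLengthPartitionToDelta} applies to partitions, and $\mathcal{B}(\tau)$ itself is not a priori a partition — it is $\widetilde{\mathcal{B}(\tau)}$ that is. This is why the mirror lemma (in the weak form of noting that $\#\mathcal{B}(\tau) = \#\widetilde{\mathcal{B}(\tau)}$, without needing the stronger length comparison from Lemma \ref{lem:mirrorEstimate}) plays a role. The hypothesis $\tau < L_\Gamma$ is needed exactly so that $\mathcal{B}(\tau)$ is nonempty and so that Lemma \ref{lem:BtauIsPartition} applies.
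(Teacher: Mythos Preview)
Your proof is correct and follows essentially the same route as the paper: apply Lemma~\ref{lem:BtauIsPartition} to get that $\widetilde{\mathcal{B}(\tau)}$ is a partition, use Lemma~\ref{lem:sumLengthPartitionToDelta} to obtain $\sum_{w\in\widetilde{\mathcal{B}(\tau)}}|I_w|^{\delta}\asymp_\Gamma 1$, invoke Lemma~\ref{lem:boundaryLengthInBtau} to see each term is $\asymp_\Gamma\tau^{\delta}$, and conclude via $\#\mathcal{B}(\tau)=\#\widetilde{\mathcal{B}(\tau)}$. Your remark that only the bijectivity of the mirror map (not the full Lemma~\ref{lem:mirrorEstimate}) is needed here is also accurate.
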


\begin{proof}
Lemma \ref{lem:BtauIsPartition} says that $\mirror{\Bc(\tau)}$ is
a partition of $\neWords$, so

\begin{equation}
\sum_{w\in\mirror{\Bc(\tau)}}\len w^{\gexp}\asymG1\label{eq:sB1}
\end{equation}
by Lemma \ref{lem:sumLengthPartitionToDelta}. Any term in the right-hand
side of (\ref{eq:sB1}) is $\asymG\tau^{\gexp}$ according to Lemma
\ref{lem:boundaryLengthInBtau}, so $\tau^{\gexp}\#\mirror{\Bc(\tau)}\asymG1$.
The result follows since $\#\Bc(\tau)=\#\mirror{\Bc(\tau)}$.
\end{proof}
\begin{lem}
\label{lem:wordLengthBtau}Let $\Gamma$ be a Fuchsian Schottky group.
There are positive constants $\consWordLengthBtauMult{\Gamma}$ and
$\consWordLengthBtauAdd{\Gamma}$ with the next property: for any
$\tau\in(0,\maxWL{\Gamma})$ and any $w\in\Bc(\tau)$,

\begin{equation}
\consWordLengthBtauMult{\Gamma}\inv\log(\tau\inv)\leq|w|\leq\consWordLengthBtauMult{\Gamma}\log(\tau\inv)+\consWordLengthBtauAdd{\Gamma}.\label{eq:ineqWordLengthBtau}
\end{equation}
\end{lem}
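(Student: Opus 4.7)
The plan is to read off both inequalities directly from the defining conditions $\len{\mirror w}\leq\tau<\len{\mirror{w'}}$, using that $\len v$ decays exponentially in $|v|$ (which gives the upper bound on $|w|$) but cannot decay faster than exponentially (which gives the lower bound on $|w|$). Both halves are short and essentially dual to one another.

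For the upper bound, I would write $w=(a_1,\ldots,a_m)$ with $m=|w|\geq 2$, so that $\mirror{w'}=(\mirror{a_m},\ldots,\mirror{a_2})$ has length $m-1$. Iterating Lemma~\ref{lem:contractionOnBoundary} exactly $m-2$ times (vacuously if $m=2$) shrinks $\mirror{w'}$ down to its leading letter and gives
\[
\len{\mirror{w'}}\;\leq\;\mathtt{c}_\Gamma^{m-2}\,\len{\mirror{a_m}}\;\leq\;\mathtt{c}_\Gamma^{m-2}\,L_\Gamma.
\]
Combined with $\tau<\len{\mirror{w'}}$ and after taking logarithms, this becomes $|w|\leq 2+\log(L_\Gamma/\tau)/\log(1/\mathtt{c}_\Gamma)$, which is the upper half of (\ref{eq:ineqWordLengthBtau}) with $\consWordLengthBtauMult{\Gamma}=1/\log(1/\mathtt{c}_\Gamma)$ and $\consWordLengthBtauAdd{\Gamma}=2+\log L_\Gamma/\log(1/\mathtt{c}_\Gamma)$.

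For the lower bound, I would use Lemma~\ref{lem:normVsBoundaryLength} to replace $\len{\mirror w}$ by $\norm{\gamma_{\mirror w}}^{-2}$ up to a $\Gamma$-dependent multiplicative constant. Since $\gamma_{\mirror w}$ is a product of $|w|$ matrices drawn from the finite set $\{\gamma_a\}_{a\in\Ac}$, submultiplicativity of the operator norm gives $\norm{\gamma_{\mirror w}}\leq M^{|w|}$ with $M:=\max_{a\in\Ac}\norm{\gamma_a}$, so $\len{\mirror w}\ggG M^{-2|w|}$. Together with $\len{\mirror w}\leq\tau$, this yields an inequality of the form $|w|\geq c_\Gamma\log(\tau^{-1})-D_\Gamma$ for constants $c_\Gamma>0$ and $D_\Gamma\geq 0$. (Alternatively, iterating Lemma~\ref{lem:concatenationEstimate} produces $\len{\mirror w}\ggG\rho^{|w|}$ for some $\rho\in(0,1)$ and leads to the same conclusion.)

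The only step requiring a moment of thought is that this last estimate carries an additive term $-D_\Gamma$, whereas (\ref{eq:ineqWordLengthBtau}) asks for the cleaner bound $\consWordLengthBtauMult{\Gamma}^{-1}\log(\tau^{-1})\leq|w|$ with no additive correction. I would absorb it via the trivial observation that $|w|\geq 2$ for every $w\in\Bc(\tau)$: on the regime $\log(\tau^{-1})\leq 2D_\Gamma/c_\Gamma$ the desired inequality holds automatically after enlarging $\consWordLengthBtauMult{\Gamma}$, while on the complementary regime the main term $c_\Gamma\log(\tau^{-1})$ dominates $D_\Gamma$ by a factor of two. I expect no serious obstacle, since the argument reduces to the matched exponential control of $\len v$ from above (contraction, Lemma~\ref{lem:contractionOnBoundary}) and from below (finite-alphabet norm growth, Lemma~\ref{lem:normVsBoundaryLength}).
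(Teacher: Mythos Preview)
Your proof is correct and rests on the same mechanism as the paper's: matched exponential upper and lower control of $\len v$ in terms of $|v|$. The implementation differs in two small ways worth noting. First, the paper begins by invoking Lemma~\ref{lem:boundaryLengthInBtau} to replace the defining conditions on $\len{\mirror w}$ and $\len{(\mirror w)'}$ by the single relation $\len w\asymG\tau$, and then runs both halves on $w$ itself; you instead work directly with $\mirror w$ and $(\mirror w)'$ from the definition of $\Bc(\tau)$, which is equally valid. Second, for the lower bound the paper iterates the concatenation estimate (Lemma~\ref{lem:concatenationEstimate}) rather than your norm argument via Lemma~\ref{lem:normVsBoundaryLength}; by packaging all three implicit constants into a single $X_\Gamma$, it arrives at the clean inequality $\tau\geq(\ell_\Gamma/X_\Gamma)^{|w|}$ with no additive correction, so the absorption step you describe at the end is not needed there. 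Your route through matrix norms is perfectly sound, and you handle the resulting $-D_\Gamma$ correctly using $|w|\geq 2$; the paper's choice just sidesteps that cleanup.
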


\begin{proof}
We fix $w\in\Bc(\tau)$. By Lemma \ref{lem:boundaryLengthInBtau}
there is $X_{\Gamma}>1$ such that

\begin{equation}
X_{\Gamma}\inv\len w\leq\tau\leq X_{\Gamma}\len w.\label{eq:wlb1}
\end{equation}
Increasing $X_{\Gamma}$ if necessary, by Lemma \ref{lem:contractionOnBoundary}
and Lemma \ref{lem:concatenationEstimate} we assume further that

\begin{equation}
\len{w_{1}}\leq X_{\Gamma}\inv\len{w_{1}'}\label{eq:wlb2}
\end{equation}
for any $w_{1}\in\Words_{\geq2}$, and 

\begin{equation}
X_{\Gamma}\inv\len{w_{2}}\len{w_{3}}\leq\len{w_{2}w_{3}}\label{eq:wlb3}
\end{equation}
for any $w_{2},w_{3}\in\neWords$ with $w_{2}\to w_{3}$.

Using $\wlength w-1$ times (\ref{eq:wlb2}) we get $\len w\llG X_{\Gamma}^{-\wlength w}$,
which combined with the right-hand side of (\ref{eq:wlb1}) gives
$\tau\llG X_{\Gamma}^{-\wlength w}$. Taking inverse and logarithm
on both sides of this inequality proves the right-hand side of (\ref{eq:ineqWordLengthBtau}).
For the other side, let $\ell_{\Gamma}:=\min_{a\in\Ac}\len a$. Applying
$\wlength w-1$times (\ref{eq:wlb3}) we obtain $\len w\geq X_{\Gamma}^{-(\wlength w-1)}\ell_{\Gamma}^{\wlength w}$,
which combined with the left-hand side of (\ref{eq:wlb1}) yields

\[
\tau\geq\left(\frac{\ell_{\Gamma}}{X_{\Gamma}}\right)^{\wlength w}.
\]
After taking inverse and logarithm on both sides we get the left-hand
side of (\ref{eq:ineqWordLengthBtau}).
\end{proof}

\subsection{Counting in $\protect\SL(2,\protect\ZZ)_{n}$\label{subsec:Counting-in-SL2Z}}

\global\long\def\consEpsCountingSL{D_{\eps}}%

\global\long\def\div{\textbf{d}}%

In this section we estimate the number of elements of $\SL(2,\ZZ)$
congruent to $I_{2}\pmod n$ of norm $\leq R$. This will be an important
ingredient of the main lemma of Subsection \ref{subsec:Estimating-the-size}.
What we actually need in that proof is a counting in $\Gamma_{n}$,
where $\Gamma$ is a Fuchsian Schottky group contained in $\SL(2,\ZZ)$.
We do not dispose of a good method to do this directly, hence we settle
a counting in $\SL(2,\ZZ)_{n}$. In our statement we exclude the upper
and lower triangular matrices of $\SL(2,\ZZ)$. These poses no issue
for our purposes since Fuchsian Schottky groups do not have unipotents
$\neq I_{2}$. 

We endow the space of $2\times2$ real matrices with the Euclidean
norm
\[
\left|\left|\begin{pmatrix}a & b\\
c & d
\end{pmatrix}\right|\right|=\sqrt{a^{2}+b^{2}+c^{2}+d^{2}}.
\]
Let $n$ be a positive integer and let $R>0$. We define 
\[
N_{n}(R)=\left\{ \gamma=\begin{pmatrix}a & b\\
c & d
\end{pmatrix}\in\SL(2,\ZZ)_{n}\mid\norm{\gamma}\leq R,bc\neq0\right\} .
\]
 We \textbf{emphasize} the condition $bc\neq0$ above. 
\begin{lem}
\label{lem:countingSL2Z}For any $\eps>0$ there is a positive constant
$\consEpsCountingSL$ with the next property: for any integer $n\geq1$
and any $R>1$ we have

\[
N_{n}(R)\leq\consEpsCountingSL\frac{R^{\eps}}{n^{\eps}}\left(\frac{R^{2}}{n^{3}}+\frac{R}{n}+1\right).
\]
\end{lem}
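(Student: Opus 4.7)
The plan is to reparametrize by writing $a=1+na'$, $b=nb'$, $c=nc'$, $d=1+nd'$ with $a',b',c',d'\in\ZZ$, which absorbs the congruence condition $\gamma\equiv I_2\pmod{n}$. Since $\norm{\gamma}\leq R$ forces $|b|\leq R$ while $b$ is a nonzero multiple of $n$, the lemma is trivial when $R<n$ (then $N_n(R)=0$), so I assume $R\geq n$ throughout.

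Expanding the determinant equation $ad-bc=1$ in the new variables yields
\[
a'+d'+n(a'd'-b'c')=0,
\]
which forces the crucial congruence $a'+d'\equiv 0\pmod{n}$. Extracting this extra mod-$n$ constraint is the main technical point: without it the final bound would be off by a factor of $n$. I count admissible pairs $(a',d')$ first: we have $|a'|,|d'|\leq (R+1)/n$, and for each of the $\leq 2(R+1)/n+1$ values of $a'$, the residue of $d'$ modulo $n$ is forced, leaving at most $2(R+1)/n^{2}+1$ choices of $d'$. Multiplying out and using $R+1\leq 2R$ gives
\[
\#\{(a',d')\}\leq C_1\left(\frac{R^2}{n^3}+\frac{R}{n}+1\right)
\]
for an absolute constant $C_1$.

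For each such pair with $ad\neq 1$, set $M:=(ad-1)/n^2$. The congruence $a'+d'\equiv 0\pmod{n}$ is exactly what guarantees $M\in\ZZ$, and $bc\neq 0$ forces $M\neq 0$. Any valid $(b,c)$ corresponds to an ordered factorization $b'c'=M$ in $\ZZ$, of which there are at most $2\tau(|M|)$, where $\tau$ denotes the number of positive divisors. Since $|M|\leq (R^2+1)/n^2\leq 2R^2/n^2$, the standard divisor bound $\tau(m)\leq C_{\eps/2}\,m^{\eps/2}$ yields
\[
2\tau(|M|)\leq C'_\eps\,(R/n)^\eps.
\]

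Multiplying the two estimates produces
\[
N_n(R)\leq\consEpsCountingSL\,\frac{R^\eps}{n^\eps}\left(\frac{R^2}{n^3}+\frac{R}{n}+1\right),
\]
as claimed. The only delicate step is harvesting the congruence $a'+d'\equiv 0\pmod{n}$ from the determinant relation; once that is in hand, the two counts combine cleanly via the divisor-function bound.
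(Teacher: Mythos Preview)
Your proof is correct and follows essentially the same approach as the paper: reparametrize via $a=1+nA,\,b=nB,\,c=nC,\,d=1+nD$, extract from $\det\gamma=1$ the extra congruence $A+D\equiv 0\pmod n$ (equivalently the paper's $d\equiv 2-a\pmod{n^{2}}$) to count pairs $(a,d)$, and then use the divisor bound on the nonzero integer $BC=(ad-1)/n^{2}$ to count $(b,c)$. Your explicit disposal of the case $R<n$ is a nice touch, but otherwise the arguments are the same.
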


We need an auxiliary result to prove Lemma \ref{lem:countingSL2Z}.
Let $\div(n)$ the number of integers dividing a nonzero integer $n$.
For example, $\div(6)=8$ since the divisors of 6 are $\pm1,\pm2,\pm3$
and $\pm6$. The next lemma follows from the formula of $\div(n)$
in terms of the decomposition of $n$ as product of prime numbers.
\begin{lem}
\label{lem:DivisorBound}For any $\eps>0$ and any nonzero integer
$n$ we have $\div(n)\ll_{\eps}|n|^{\eps}$.
\end{lem}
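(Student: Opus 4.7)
The plan is to use the multiplicativity of the divisor function and split the prime factorization of $n$ into ``small'' and ``large'' primes relative to a threshold depending on $\eps$. Since the inequality $\div(n)\ll_\eps |n|^\eps$ is insensitive to signs, I may assume $n\geq 1$. Writing $n=\prod_{i=1}^{r}p_{i}^{a_{i}}$ with distinct primes $p_i$, the classical formula gives $\div(n)=\prod_{i=1}^{r}(a_{i}+1)$, so it suffices to show
\[
\prod_{i=1}^{r}\frac{a_{i}+1}{p_{i}^{\eps a_{i}}}\ll_{\eps}1.
\]

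First I would handle the large primes. Pick $P_{\eps}:=2^{1/\eps}$, so that $p^{\eps}\geq 2$ whenever $p\geq P_{\eps}$. For any such prime and any integer $a\geq 0$ one has $p^{\eps a}\geq 2^{a}\geq a+1$, hence every factor in the product indexed by $p_{i}\geq P_{\eps}$ is at most $1$ and contributes nothing to the upper bound.

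Next I would deal with the small primes $p_{i}<P_{\eps}$, of which there are at most a constant $M_{\eps}$. For each such factor I would bound $a_{i}+1\leq 2a_{i}$ when $a_{i}\geq 1$ and then compare $\frac{2a_{i}}{p_{i}^{\eps a_{i}}}$ to its maximum over $a\geq 1$. Since $p_{i}\geq 2$, the function $a\mapsto \frac{a+1}{2^{\eps a}}$ attains a finite maximum $K_{\eps}$ on $\mathbb{Z}_{\geq 0}$ (it tends to $0$ as $a\to\infty$), so each small-prime factor is bounded by $K_{\eps}$. Multiplying these $M_{\eps}$ bounds gives a constant $\consEpsCountingSL$ depending only on $\eps$, finishing the proof.

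The argument is entirely elementary and there is no real obstacle: the only subtlety is choosing the threshold $P_{\eps}$ correctly so that the tail product is $\leq 1$, which is exactly what the choice $P_{\eps}=2^{1/\eps}$ achieves.
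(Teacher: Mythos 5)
Your proof is correct and is exactly the standard argument the paper alludes to when it states that the lemma ``follows from the formula of $\div(n)$ in terms of the decomposition of $n$ as product of prime numbers'': split the prime factors at a threshold $P_{\eps}$, note the large-prime factors of $\prod_i (a_i+1)/p_i^{\eps a_i}$ are at most $1$, and bound the boundedly many small-prime factors by a constant. The only detail to adjust is that the paper's $\div(n)$ also counts negative divisors, so $\div(n)=2\prod_i(a_i+1)$; the extra factor of $2$ is of course absorbed into the implied constant.
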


\begin{proof}
[Proof of Lemma \ref{lem:countingSL2Z}]Consider any 
\[
\gamma=\begin{pmatrix}a & b\\
c & d
\end{pmatrix}\in\SL(2,\ZZ)
\]
such that $\gamma\equiv I_{2}\pmod n$, $bc\neq0$ and $\norm{\gamma}\leq R$.
Since $\det\gamma=1$ and $b\neq0$, $c$ is determined once we fix
$a,b,d$. Let us bound the number of such choices. We will choose
$a,d,b$ in that order. 

Write 
\[
a=An+1,\quad b=Bn,\quad c=Cn,\quad d=Dn+1,
\]
for some integers $A,B,C,D$. Since $|a|\leq R$, there are at most
$\frac{2R}{n}+1$ choices for $a$. From $ad-bc=1$ we get $d\equiv2-a\pmod{n^{2}}$.
We also know that $|d|\leq R$, thus there are at most $\frac{2R}{n^{2}}+1$
choices for $d$.

Since $|a|\leq R$, then $|A|\leq\frac{R+1}{n}$. Similarly $|D|\leq\frac{R+1}{n}$.
Note that $\det\gamma=1$ is equivalent to 
\[
BC=AD+\frac{A+D}{n}.
\]
Given that $BC\neq0$, there are $\div\left(AD+\frac{A+D}{n}\right)$
choices for $b$. By Lemma \ref{lem:DivisorBound}

\begin{eqnarray*}
\div\left(AD+\frac{A+D}{n}\right)\ll_{\eps}\left|AD+\frac{A+D}{n}\right|^{\eps/2} & \leq & \left[\left(\frac{R+1}{n}\right)^{2}+\frac{2(R+1)}{n^{2}}\right]^{\eps/2}\\
 & \ll & \frac{R^{\eps}}{n^{\eps}}.
\end{eqnarray*}
To get the last inequality we used $R>1$. In summary,
\begin{eqnarray*}
N_{n}(R) & \ll_{\eps} & \left(\frac{2R}{n}+1\right)\left(\frac{2R}{n^{2}}+1\right)\frac{R^{\eps}}{n^{\eps}}\\
 & = & \frac{R^{\eps}}{n^{\eps}}\left(\frac{4R^{2}}{n^{3}}+\left[\frac{2}{n}+\frac{2}{n^{2}}\right]R+1\right)\\
 & \ll & \frac{R^{\eps}}{n^{\eps}}\left(\frac{R^{2}}{n^{3}}+\frac{R}{n}+1\right).
\end{eqnarray*}
\end{proof}

\section{Zeta functions and eigenvalues of hyperbolic surfaces\label{sec:Zeta-functions}}

Here we introduce various zeta functions attached to a convex cocompact
hyperbolic surface $S=\Gamma\backslash\Hyp$ of infinite volume, and
we explain the relation between eigenvalues of $S$ and the zeros
of their zeta functions. This section is has three parts: In Subsection
\ref{subsec:Selberg's-zeta-function} we define the Selberg zeta function
$Z_{S}$ of $S$ and we recall the description of its zeros and multiplicities.
To do so we define resonances of $S$, since most zeros of $Z_{S}$
are resonances. In Subsection \ref{subsec:Dynamical-zeta-functions}
we introduce the transfer operators $\Lc_{\Bc,s,\sigma}$ associated
to some Schottky data $\Sc$ of $\Gamma$, their corresponding dynamical
zeta function $\zeta_{\Bc,\sigma}$ and we give some properties of
them we will need. Finally, assuming that $\Gamma$ is contained in
$\SL(2,\ZZ)$, we introduce the zeta functions $\zeta_{\tau,n}$ we
use in the proof of Proposition \ref{prop:Multiplicity_bound} to
estimate the eigenvalues of $S_{n}$ in appropriate intervals.

\subsection{Selberg's zeta function\label{subsec:Selberg's-zeta-function}}

\global\long\def\geod#1{\mathscr{G}_{#1}}%
 
\global\long\def\Div{\text{div }}%

In this section we explain the relation between the eigenvalues of
a convex cocompact hyperbolic surface $S$ and the zeros of its Selberg
zeta function $Z_{S}$. This connection was established by Patterson
and Perry in \cite{patterson_divisor_2001}. Here we will deduce it
from a factorization of $Z_{S}$ due to Borthwick, Judge and Perry. 

Let us recall the definition of the zeta function of a connected hyperbolic
surface $X$. We denote by $\mathscr{G}_{X}$ the set of primitive
closed geodesics of $X$. The norm $N(g)$ of any $g\in\geod X$ is
defined as $e^{\ell(g)}$, where $\ell(g)$ is the length of $g$.
The \emph{Selberg zeta function} of $X$ is the holomorphic map in
the half-plane $\re s>\gexp$ defined by the infinite product

\[
Z_{X}(s)=\prod_{g\in\geod X}\prod_{k=0}^{\infty}\left(1-N(g)^{-(s+k)}\right).
\]
For a proof of the convergence see \cite[p. 33]{borthwick_spectral_2016}.
For a geometrically finite, convex cocompact hyperbolic surface $S$
there is an alternative dynamical definition of $Z_{S}$ that we recall
in Subsection \ref{subsec:Dynamical-zeta-functions}. From it one
can see that $Z_{S}$ has an entire extension, which we denote also
by $Z_{S}$. We describe below the zero set of $Z_{S}$ in terms of
divisors. Let us recall this classical definition.

The \emph{divisor} of an entire function $\varphi:\CC\to\CC$ is the
map $\CC\to\NN$ that encodes the multiplicities of the zeros of $\varphi$.
Concretely, for any $z\in\CC$, let $d_{z}:\CC\to\NN$ be the map

\[
d_{z}(w)=\begin{cases}
1 & \text{if }z=w,\\
0 & \text{if }z\neq w,
\end{cases}
\]
and let $M_{\varphi}(z)$ be the multiplicity of $z$ as zero of $\varphi$.
The divisor of $\varphi$ is

\[
\Div\varphi=\sum_{z\in\CC}M_{\varphi}(z)d_{z}.
\]
To give explicitly the divisor of $Z_{S}$ we need to introduce a
spectral data of $S$ finer than the eigenvalues. 

The \emph{resolvent} of $S$ is the meromorphic family of bounded
operators of $L{{}^2}(S)$ given by

\[
R_{S}(s)=(\Delta_{S}-s(1-s)I)\inv.
\]
It is initially defined on the half-plane $\re s>\frac{1}{2}$, where
the poles are the such that $\lambda=s(1-s)$ is an eigenvalue of
$S$. By the work \cite{mazzeo_meromorphic_1987} of Mazzeo and Melrose
we know how to reduce the domain of $\Delta_{S}$ to extend $R_{S}$
to all $\CC$ as a meromorphic family of operators $\mathscr{C}_{c}^{\infty}(S)\to\mathscr{C}^{\infty}(S)$%
, that we still denote $R_{S}$. A \emph{resonance} of $S$ is a pole
of $R_{S}$. We denote by $\mathcal{R}_{S}$ the set of resonances
of $S$. Following \cite[Definition 8.2, pp. 145]{borthwick_spectral_2016},
the multiplicity $m_{s}$ of a resonance $s$ of $S$ is defined as

\[
m_{s}=\text{rank }\left(\int_{C}R_{S}(s)ds\right),
\]
where $C$ is an anti clockwise oriented circle enclosing $s$ and
no other resonance of $S$. The multiplicity of a resonance $s$ with
$\re s>\frac{1}{2}$ coincides with the multiplicity of the eigenvalue
$\lambda=s(1-s)$. The next expression of $\Div Z_{S}$ follows from
a factorization of $Z_{S}$ that is a particular case of \cite[Theorem 10.1, pp. 213]{borthwick_spectral_2016}
\footnote{This is a factorization of the $Z_{X}$ of any nonelementary, geometrically
finite hyperbolic surface $X$ of infinite area.}. 
\begin{prop}
Let $S$ be a nonelementary, geometrically finite, convex cocompact
hyperbolic surface. Then 
\[
\Div Z_{S}=-\chi(S)\sum_{n=0}^{\infty}(2n+1)d_{-n}+\sum_{s\in\mathcal{R}_{S}}m_{s}d_{s}.
\]
\end{prop}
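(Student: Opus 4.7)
The plan is to extract the divisor of $Z_S$ directly from the Hadamard-type factorization of Borthwick, Judge and Perry, stated in the form of \cite[Theorem 10.1]{borthwick_spectral_2016}. In the generality we need, that theorem decomposes $Z_S$ as a product of four factors: an entire nowhere-vanishing exponential factor $e^{q(s)}$ with $q$ polynomial, a canonical product $P_{\mathcal{R}_S}(s)$ whose zero set is exactly $\mathcal{R}_S$ with each resonance $s$ appearing with multiplicity $m_s$, a topological factor of the form $G_\infty(s)^{-\chi(S)}$, where $G_\infty$ is a Barnes-type entire function whose zero set consists precisely of the nonpositive integers $\{-n : n \geq 0\}$ with multiplicity $2n+1$ at $-n$, and finally a cuspidal factor $Z_C(s)$.

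First I would specialize this factorization to the convex cocompact case. Since $S$ is convex cocompact its fundamental group contains no parabolic elements, so $S$ has no cusps and the cuspidal factor reduces to $Z_C \equiv 1$. It then remains to compute the divisor of each of the three surviving factors. The exponential $e^{q(s)}$ is nowhere zero and contributes nothing; by the defining property of the canonical product, $P_{\mathcal{R}_S}$ contributes $\sum_{s\in\mathcal{R}_S} m_s\, d_s$; and the power $G_\infty(s)^{-\chi(S)}$ contributes $-\chi(S)\sum_{n=0}^{\infty}(2n+1)\,d_{-n}$ directly from the description of the zero set of $G_\infty$. Since the divisor of a product is the sum of the divisors of its factors, summing these three contributions yields the claimed identity.

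The only delicate point is to correctly identify the factors of Borthwick's general factorization in the convex cocompact special case, in particular verifying that $Z_C$ is indeed trivial in the absence of cusps and that the combinatorial weights $2n+1$ and the sign conventions for $\chi(S)$ agree with those in \cite[Theorem 10.1]{borthwick_spectral_2016}. These are bookkeeping checks against the reference rather than substantive obstacles, which is why, once the factorization has been invoked, the divisor formula is essentially immediate.
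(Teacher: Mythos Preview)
Your proposal is correct and matches the paper's approach exactly: the paper does not give a standalone proof but simply states that the divisor formula ``follows from a factorization of $Z_S$ that is a particular case of \cite[Theorem 10.1, pp.~213]{borthwick_spectral_2016}.'' Your write-up supplies the bookkeeping behind that sentence---specializing to the convex cocompact case to kill the cuspidal factor and reading off the contributions of the remaining factors---which is precisely the intended argument.
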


For ease of reference we record in the next corollary the correspondence
between resonances of $S$ and zeros of $Z_{S}$. We denote by $\ZZ_{\leq0}$
the set of integers $\leq0$.
\begin{cor}
\label{cor:relResonancesZerosSZetaFunction}Let $S$ be a nonelementary,
geometrically finite, convex cocompact hyperbolic surface. The resonances
of $S$ and the zeros of $Z_{S}$ coincide in $\CC-\ZZ_{\leq0}$.
Moreover, this correspondence respects multiplicities.
\end{cor}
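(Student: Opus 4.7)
The plan is to extract the corollary directly from the divisor formula stated in the preceding proposition. Since every ingredient is already in place, the proof is essentially a bookkeeping exercise on the support of the two summands in
\[
\Div Z_{S}=-\chi(S)\sum_{n=0}^{\infty}(2n+1)d_{-n}+\sum_{s\in\mathcal{R}_{S}}m_{s}d_{s}.
\]

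First I would observe that the first sum, the \emph{topological} part, is supported on $\ZZ_{\leq 0}$ because the distributions $d_{-n}$ for $n\geq 0$ are concentrated at nonpositive integers. Consequently, evaluating $\Div Z_{S}$ at any $s_{0}\in\CC-\ZZ_{\leq 0}$ gives
\[
M_{Z_{S}}(s_{0})=\sum_{s\in\mathcal{R}_{S}}m_{s}\,d_{s}(s_{0})=
\begin{cases}
m_{s_{0}} & \text{if } s_{0}\in\mathcal{R}_{S},\\
0 & \text{otherwise.}
\end{cases}
\]
Hence $s_{0}$ is a zero of $Z_{S}$ if and only if it is a resonance, and in that case the multiplicity of the zero equals $m_{s_{0}}$.

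The argument is short because the proposition has done all the work; the only thing worth underlining is that the restriction to $\CC-\ZZ_{\leq 0}$ is genuinely needed, since at a nonpositive integer the topological term and a potential resonance contribution could combine, and one can no longer read off $m_{s}$ from the multiplicity of the zero of $Z_{S}$. There is no real obstacle here, only a careful reading of the supports of the two divisors appearing in the factorization.
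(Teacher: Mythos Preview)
Your argument is correct and is precisely the intended deduction: the paper does not even write out a proof of this corollary, treating it as an immediate consequence of the divisor formula in the preceding proposition, which is exactly what you do by noting that the topological summand is supported on $\ZZ_{\leq 0}$.
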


\subsection{Dynamical zeta functions\label{subsec:Dynamical-zeta-functions}}

Here we introduce the dynamical zeta functions $\zeta_{\Bc,\sigma}$
associated to a Fuchsian Schottky group $\Gamma=\Gamma_{\Sc}$. The
parameters $\Bc$ and $\sigma$ are respectively a finite subset of
$\Words_{\geq2}$ and a finite-dimensional unitary representation
of $\Gamma$. The intuition is that we choose $\Bc$ depending on
the scale at which we want study the dynamics of $\Gamma$ in $\Lambda_{\Gamma}$,
whereas $\sigma$ serves to represent a dynamical zeta function of
a finite covering of $S$ as a zeta function on $S$. We call them
dynamical zeta functions because they are defined in terms of transfer
operators $\Lc_{\Bc,s,\sigma}$ coming from the action of $\Gamma$
on the open subset $U$\footnote{This was defined in (\ref{eq:def_U}).}
of $\CC$. We start by introducing the operators $\Lc_{\Bc,s,\sigma}$
and stating their main properties. Then, we define the dynamical zeta
functions $\zeta_{\Bc,\sigma}$ and we explain how to use them to
count eigenvalues of coverings of $S$.

Let $\Gamma=\Gamma_{\Sc}$ be a Fuchsian Schottky group. First we
recall the definition of the classical transfer operator of $\Gamma$.
Remember that 
\[
U=\bigcup_{a\in\Ac}D_{a}.
\]
Consider some complex number $s$ and a measurable map $f:U\to\CC$.
We define $\Lc_{s}f:U\to\CC$ on $D_{b}$ as

\begin{equation}
\Lc_{s}f(z)=\sum_{\substack{a\in\Ac\\
a\neq\mirror b
}
}\gamma_{a}'(z)^{s}f(\gamma_{a}(z)).\label{eq:def_classical_transf_op}
\end{equation}
The term $\gamma_{a}'(z)^{s}$ needs some explanation. Since $\gamma_{a}'$
does not take negative values on $U-D_{\mirror a}$\footnote{Let $\gamma$ be any Möbius transformation of $\widehat{\CC}$. A
direct computation shows that if $\im z\neq0$, then $\gamma'(z)<0$
if and only if $\re z=\gamma\inv(\infty)$. When $\gamma=\gamma_{a}$
we know that $\gamma_{a}\inv(\infty)$ is in $D_{\mirror a}$ because
$\gamma_{a}(D_{\mirror a})=\widehat{\CC}-\cl(D_{a})$.} 

, we define $\gamma_{a}'(z)^{s}$ as $\exp(sF(\gamma_{a}'(z))$, where
$F$ is the branch of the logarithm $\CC-(-\infty,0]\to\{\rho\in\CC\mid|\im\rho|<\pi\}$.
The map $\Lc_{s}$ is part of a family of transfer operators%
{} of $\Gamma$ that we define now. Consider a finite subset $\Bc$
of $\Words_{\geq2}$, a complex number $s$ and a unitary representation
$(\sigma,V)$ of $\Gamma$. For any measurable map $f:U\to V$ we
define $\Lc_{\Bc,s,\sigma}f:U\to V$ on $D_{b}$ as

\begin{equation}
\Lc_{\Bc,s,\sigma}f(z)=\sum_{\substack{w\in\Bc\\
w\rightsquigarrow b
}
}c_{w}(z)^{s}\sigma(\gamma_{w'}\inv)f(\gamma_{w'}(z)).\label{eq:def_general_transf_op}
\end{equation}
Recall that $c_{w}$ means $\gamma'_{w'}$. Again, the terms $c_{w}(z)$
appearing in (\ref{eq:def_general_transf_op})are not in $(-\infty,0]$,
thus we define their $s$-th power as we did in (\ref{eq:def_classical_transf_op}).
Note that $\Lc_{s}=\Lc_{\Bc,s,\sigma}$ for $\Bc=\Words_{2}$ and
$\sigma$ the trivial one-dimensional representation of $\Gamma$. 

Let us fix the functional spaces where the transfer operators act
for our purposes. For any Hilbert space $V$, the space

\[
\Hc(U,V)=\left\{ f:U\to V\mid f\text{ is holomorphic,\ensuremath{\int_{U}\norm{f(z)}dz<\infty}}\right\} 
\]
endowed with the inner product 
\[
\scal fg=\int_{U}\scal{f(u)}{g(u)}_{V}dz
\]
is a Hilbert space. The integrals in $U$ are taken with respect to
the Lebesgue measure. $\Hc(U,V)$ is a \emph{Bergman space} of $U$.
The classical Bergman space of $U$ is $\Hc(U,\CC)$, and we denote
it $\Hc(U)$. Note that for any unitary representation $(\sigma,V_{\sigma})$
of $\Gamma$, $\Lc_{\Bc,s,\sigma}$ preserves $\Hc(U,V_{\sigma})$
. From now on we consider $\Lc_{\Bc,s,\sigma}$ as an operator on
its respective Bergman space. To define the dynamical zeta functions
of $\Gamma$ we need the following fact, which is \cite[Lemma 4.1]{magee_explicit_2020}. 
\begin{lem}
\label{lem:TransOpAreTraceClass}Let $\Gamma$ be a Fuchsian Schottky
group. For any nonempty finite subset $\Bc$ of $\Words_{\geq2}$,
any $s\in\CC$ and any finite dimensional unitary representation $\sigma$
of $\Gamma$, the transfer operator $\Lc_{\Bc,s,\sigma}$ is trace
class. 
\end{lem}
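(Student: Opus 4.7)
My plan is to reduce to a single-word analysis: since $\Bc$ is finite and the trace class ideal is closed under finite sums, it suffices to show that each summand
\[
\Lc_w f(z) = c_w(z)^s\, \sigma(\gamma_{w'}\inv)\, f(\gamma_{w'}(z)) \qquad (z \in D_{\End w}),
\]
extended by zero on the remaining components of the disjoint union $U$, defines a trace class operator on $\Hc(U, V_\sigma)$. Then $\Lc_{\Bc,s,\sigma} = \sum_{w \in \Bc} \Lc_w$ will also be trace class.

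Fix such a $w = (a_1, \ldots, a_m) \in \Bc$, with $m \geq 2$. The key geometric input is that $\cl D_w$ is compactly contained in $D_{\Start w}$. To see this, iterate the Schottky relation (\ref{eq:def_Schottky}) along $w'$: at each step one applies a generator $\gamma_{a_i}$ to a closed disk $\cl D_{a'}$ with $a' \neq \mirror{a_i}$ (since $w$ is reduced); because $\cl D_{a'}$ and $\cl D_{\mirror{a_i}}$ are disjoint, $\gamma_{a_i}(\cl D_{a'})$ is disjoint from $\gamma_{a_i}(\partial D_{\mirror{a_i}} \cap \Hyp) = \partial D_{a_i} \cap \Hyp$, and therefore lies strictly inside $D_{a_i}$. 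Applying this $m-1$ times yields $\cl D_w = \gamma_{w'}(\cl D_{\End w}) \subset D_{\Start w}$ with positive boundary separation.

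Given this compact containment, I factor $\Lc_w$ as a composition of four bounded operators: first, the orthogonal projection $\Hc(U, V_\sigma) \to \Hc(D_{\Start w}, V_\sigma)$; second, the Bergman restriction $r \colon \Hc(D_{\Start w}, V_\sigma) \to \Hc(D_w, V_\sigma)$; third, the weighted pullback $g \mapsto c_w^s \cdot \sigma(\gamma_{w'}\inv)\, g \circ \gamma_{w'}$ from $\Hc(D_w, V_\sigma)$ to $\Hc(D_{\End w}, V_\sigma)$, which is bounded since $\gamma_{w'}$ restricts to a biholomorphism $D_{\End w} \to D_w$ and both $c_w^s$ and the unitary $\sigma(\gamma_{w'}\inv)$ are bounded on $\cl D_{\End w}$; and fourth, extension by zero $\Hc(D_{\End w}, V_\sigma) \to \Hc(U, V_\sigma)$. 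The only nontrivial factor is $r$. The restriction map between Bergman spaces of concentric (or more generally nested) disks, with the smaller one compactly contained in the larger, is nuclear---hence trace class---with singular values decaying at the geometric rate $(\text{rad}(D_w)/\text{rad}(D_{\Start w}))^k$; this is visible by expanding holomorphic functions in a basis of monomials centered in the smaller disk. Tensoring with the identity on the finite-dimensional $V_\sigma$ preserves nuclearity, so $\Lc_w$ is trace class.

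The main technical obstacle is the nuclearity of the Bergman restriction map; this is classical and can either be cited or established directly via the explicit monomial expansion sketched above. Everything else---the ping-pong containment, the finiteness of the sum over $\Bc$, and the ideal properties of the trace class---is formal.
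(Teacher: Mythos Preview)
Your argument is correct and is the standard route to this result. The paper itself does not give a proof: it simply records the statement as \cite[Lemma 4.1]{magee_explicit_2020}. What you have written is, in outline, the proof one finds there and in the earlier literature on transfer operators acting on Bergman spaces (Ruelle, Mayer, Guillop\'e--Lin--Zworski): decompose into single-word summands, use the Schottky nesting $\cl D_w \subset D_{\Start w}$ to factor each summand through a Bergman restriction between compactly nested disks, and invoke the nuclearity of that restriction. One small remark on your sketch: the monomial diagonalisation works cleanly only for \emph{concentric} disks, so for general $D_w \subset D_{\Start w}$ you should insert an intermediate disk concentric with $D_{\Start w}$ (or conjugate by an automorphism) before running the singular-value computation; you flag this yourself, so there is no gap.
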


Since any $\Lc_{\Bc,s,\sigma}$ as above is trace class, then $I-\Lc_{\Bc,s,\sigma}$
has a well-defined Fredholm determinant. The zeta function $\zeta_{\Bc,\sigma}$
of $\Gamma$ is the map

\[
\zeta_{\Bc,\sigma}(s)=\det(I-\Lc_{\Bc,s,\sigma}).
\]
It is entire because $s\mapsto\Lc_{\Bc,s,\sigma}$ is an analytic
family of bounded operators of $\Hc(U,V_{\sigma})$ . 

One of the main reason for us to care about the family of functions
$\zeta_{\Bc,\sigma}$ is that it encompasses the Selberg zeta functions
of all the finite coverings of $S=\Gamma\backslash\Hyp$. This is
part of \cite[Proposition 4.4]{magee_explicit_2020}, and it implies
that $Z_{S}$ has an entire extension.

\begin{thm}
\label{thm:DynamicalZetaEqualSelbergZeta}Let $\Gamma'$ be a finite-index
subgroup of a Fuchsian Schottky group $\Gamma$ and let $\sigma$
be the regular representation of $\Gamma$ in $\ell^{2}(\Gamma'\backslash\Gamma)$.
Then $\zeta_{\Words_{2},\sigma}$ coincides with the Selberg zeta
function of $\Gamma'\backslash\Hyp$ in the domain of the last one.
\end{thm}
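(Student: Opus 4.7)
The plan is to compare the logarithmic expansions of $\zeta_{\Words_{2},\sigma}$ and of the Selberg zeta function of $S':=\Gamma'\backslash\Hyp$ on the half-plane $\re s>\gexp$ where both are given by absolutely convergent series; since both functions are entire, analytic continuation then yields equality on all of $\CC$. Because $\Lc_{\Words_{2},s,\sigma}$ is trace class by Lemma~\ref{lem:TransOpAreTraceClass}, the Fredholm determinant identity gives
\[
\log\zeta_{\Words_{2},\sigma}(s)=-\sum_{k\geq1}\frac{1}{k}\tr\bigl(\Lc_{\Words_{2},s,\sigma}^{k}\bigr).
\]
My first step would be to iterate the operator: by the chain rule and induction on $k$, for $z\in D_{b}$,
\[
\Lc_{\Words_{2},s,\sigma}^{k}f(z)=\sum_{\substack{W=(a_{0},\ldots,a_{k-1})\in\Words_{k}\\ a_{k-1}\neq\mirror{b}}}\gamma_{W}'(z)^{s}\,\sigma(\gamma_{W}\inv)\,f\bigl(\gamma_{W}(z)\bigr),
\]
with $\gamma_{W}=\gamma_{a_{0}}\cdots\gamma_{a_{k-1}}$.

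Next I would apply a one-dimensional Atiyah--Bott fixed point formula on each Bergman space $\Hc(D_{b},V_{\sigma})$. A summand contributes to the trace only when $\gamma_{W}$ sends $D_{b}$ into itself, which happens precisely when $\Start{W}=b$; combined with the running constraint $\End{W}\neq\mirror{b}$, this is exactly the condition that $W$ be cyclically reduced. In that case $\gamma_{W}$ is a hyperbolic holomorphic contraction of $D_{b}$ with a unique attracting fixed point $z_{W}\in D_{b}$, and Atiyah--Bott produces the local trace $\gamma_{W}'(z_{W})^{s}\tr\sigma(\gamma_{W}\inv)/\bigl(1-\gamma_{W}'(z_{W})\bigr)$. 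Using the standard identity $\gamma_{W}'(z_{W})=N(\gamma_{W})\inv$ for hyperbolic elements of $\SL(2,\RR)$ and summing over $b$, we obtain
\[
\tr\bigl(\Lc_{\Words_{2},s,\sigma}^{k}\bigr)=\sum_{\substack{W\in\Words_{k}\\ W\text{ cyclically reduced}}}\frac{N(\gamma_{W})^{-s}}{1-N(\gamma_{W})\inv}\tr\sigma(\gamma_{W}\inv).
\]

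The third step is a combinatorial reorganization. Since $\Gamma$ is free on $\gamma_{1},\ldots,\gamma_{N}$, conjugacy classes of nontrivial elements correspond to cyclic words, and a cyclically reduced linear word of length $k$ represents the class $[\gamma_{0}^{n}]$ for a unique primitive $[\gamma_{0}]$ with $n$ times the combinatorial length of $\gamma_{0}$ equal to $k$; each such cyclic class has exactly as many linear representatives as the combinatorial length of its primitive root. These combinatorial factors cancel against the $1/k$ from the Fredholm identity, and since $\tr\sigma$ is a class function we reach
\[
\log\zeta_{\Words_{2},\sigma}(s)=-\sum_{[\gamma_{0}]\in[\Gamma]_{\mathrm{prim}}}\sum_{n\geq1}\frac{1}{n}\,\frac{N(\gamma_{0})^{-ns}}{1-N(\gamma_{0})^{-n}}\tr\sigma(\gamma_{0}^{-n}).
\]

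For the final step, I would evaluate $\tr\sigma(\gamma_{0}^{-n})$ as the number of cosets $\Gamma'x$ fixed by right multiplication by $\gamma_{0}^{n}$: decomposing $\Gamma'\backslash\Gamma$ into $\langle\gamma_{0}\rangle$-orbits of sizes $k_{1},\ldots,k_{m}$, this trace equals $\sum_{j\,:\,k_{j}\mid n}k_{j}$. Each orbit of size $k_{j}$ corresponds to a primitive hyperbolic $\gamma'\in\Gamma'$ with $N(\gamma')=N(\gamma_{0})^{k_{j}}$, and this yields a bijection between pairs (primitive $[\gamma_{0}]$ in $\Gamma$, orbit on $\Gamma'\backslash\Gamma$) and primitive conjugacy classes of $\Gamma'$. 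Substituting $n=qk_{j}$ then transforms the previous display into the standard logarithmic expansion of $\log Z_{S'}(s)$, completing the proof. The main obstacle is the trace computation in the second step: one must carefully justify the Atiyah--Bott formula on the multi-component Bergman space $\bigoplus_{b}\Hc(D_{b},V_{\sigma})$ with the matrix-valued twist by $\sigma$, and verify that summands where $\gamma_{W}$ does not preserve $D_{b}$ contribute zero to the trace.
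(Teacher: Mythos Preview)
The paper does not supply its own proof of this theorem: it is quoted as ``part of \cite[Proposition 4.4]{magee_explicit_2020}'' and used as a black box. So there is no argument in the present paper to compare against; what one can compare against is the standard proof in the literature (Ruelle, Pollicott, Borthwick, and the cited Magee paper), and your outline is precisely that standard argument.

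Your proposal is correct. The four steps---Fredholm expansion $\log\det(I-\Lc)=-\sum_{k}\tfrac{1}{k}\tr\Lc^{k}$, the holomorphic Lefschetz/Ruelle trace formula on the Bergman space, the reorganization of cyclically reduced words into primitive conjugacy classes, and the induced-class bijection relating $\Gamma'$-primitive classes to pairs (primitive $[\gamma_{0}]$ in $\Gamma$, $\langle\gamma_{0}\rangle$-orbit on $\Gamma'\backslash\Gamma$)---are exactly the ingredients used in the references. A few small points you should tighten if you write this out in full: (i) the ``Atiyah--Bott'' step is really the explicit Bergman-kernel trace computation (as in Borthwick's book or Jenkinson--Pollicott), and you should state it as such rather than invoke a general fixed-point theorem; the vanishing of off-diagonal blocks ($a_{0}\neq b$) is immediate because those blocks map $\Hc(D_{a_{0}})$ to $\Hc(D_{b})$ and hence contribute nothing to the trace of an operator on $\bigoplus_{b}\Hc(D_{b})$; (ii) the identity $\gamma_{W}'(z_{W})=N(\gamma_{W})^{-1}$ holds because $z_{W}$ is the \emph{attracting} fixed point of the hyperbolic element $\gamma_{W}$, and you should say this explicitly; (iii) for the permutation representation $\sigma$ one has $\tr\sigma(\gamma^{-1})=\tr\sigma(\gamma)$, so the inverse in $\tr\sigma(\gamma_{0}^{-n})$ is harmless but could be dropped. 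With these clarifications your argument is complete.
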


Before proceeding our discussion of the functions $\zeta_{\Bc,\sigma}$
we state a property of the transfer operators that we will need in
Section \ref{sec:Main-estimate}. Since $\Lc_{\Bc,s,\sigma}$ is trace
class, it is in particular a Hilbert-Schmidt operator. Here is an
explicit formula of the Hilbert-Schmidt norm of $\Lc_{\Bc,s,\sigma}$
taken from \cite[Lemma 4.7, p. 156]{magee_explicit_2020}. We use
the following notation: Let $\Bc$ be a subset of $\Words$ and consider
$a,b\in\Ac$. Then $\Bc_{a}^{b}$ is the set of $w\in\Bc$ whose first
and last letters are respectively $a$ and $b$. For any $a\in\Ac$
we denote $B_{a}$ the Bergman kernel of $D_{a}$\footnote{The Bergman kernel of a disk $D$ in $\CC$ of radius $r$ and center
$c$ is the map $B_{D}:D\times D\to\CC$ given by 
\begin{equation}
B_{D}(w,z)=\frac{r^{2}}{\pi\left[r^{2}-(w-c)\overline{(z-c)}\right]^{2}}.\label{eq:BergmanKFromula}
\end{equation}
It is characterized by the fact that $f(w)=\int_{D}B_{D}(w,z)f(z)dz$
for any $f\in\Hc(D)$ and any $w\in D$. See \cite[Chapter 1]{duren_bergman_2014}
for a proof. }.
\begin{lem}
\label{lem:HS_norm_formula}Let $\Gamma$ be a Fuchsian Schottky group.
For any finite subset $\Bc$ of $\Words_{\geq2}$, any $s\in\CC$,
and any finite dimensional unitary representation $\sigma$ of $\Gamma$,
the Hilbert-Schmidt norm of $\Lc_{\Bc,s,\sigma}$ is given by
\end{lem}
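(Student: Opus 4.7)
The plan is to represent each block of $\Lc_{\Bc,s,\sigma}$ as an integral operator with an explicit End($V_\sigma$)-valued kernel built from the Bergman kernels $B_a$, and then apply the standard identity that the Hilbert--Schmidt norm squared of an integral operator is the $L^2$-norm squared of its kernel. Since the disks $(D_a)_{a\in\Ac}$ have pairwise disjoint closures and $U=\bigsqcup_{a\in\Ac}D_a$, we have the orthogonal decomposition
\[
\Hc(U,V_\sigma)=\bigoplus_{a\in\Ac}\Hc(D_a,V_\sigma),
\]
and the Bergman kernel of $U$ is block-diagonal with $B_a$ on $D_a\times D_a$. Therefore $\Lc_{\Bc,s,\sigma}$ has a natural block decomposition indexed by $(a,b)\in\Ac\times\Ac$, where the $(a,b)$-block sends $\Hc(D_a,V_\sigma)$ to $\Hc(D_b,V_\sigma)$ and, by (\ref{eq:def_general_transf_op}), only the words $w\in\Bc$ with $\Start w=a$ and $\End w=b$ contribute.

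Next I would rewrite this block as an integral operator. For $f_a\in\Hc(D_a,V_\sigma)$ and $z\in D_b$, the reproducing property of $B_a$ lets us replace $f_a(\gamma_{w'}(z))$ by $\int_{D_a}B_a(\gamma_{w'}(z),\zeta)\,f_a(\zeta)\,d\zeta$, so the $(a,b)$-block is integration against the End($V_\sigma$)-valued kernel
\[
K_{a,b}(z,\zeta)=\sum_{w\in\Bc_a^b}c_w(z)^s\,\sigma(\gamma_{w'}\inv)\,B_a(\gamma_{w'}(z),\zeta).
\]
The interchange of sum and integral is legitimate because $\Bc$ is finite and each summand is integrable (Lemma \ref{lem:TransOpAreTraceClass} in particular guarantees that $\Lc_{\Bc,s,\sigma}$ is bounded, hence each block is an honest operator on the relevant Bergman spaces).

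Finally, I would apply the identity
\[
\norm{\Lc_{\Bc,s,\sigma}}_{HS}^2=\sum_{a,b\in\Ac}\int_{D_b}\!\int_{D_a}\norm{K_{a,b}(z,\zeta)}_{HS}^2\,d\zeta\,dz,
\]
expand the Hilbert--Schmidt norm of $K_{a,b}$ as a double sum over $w,v\in\Bc_a^b$, and integrate out $\zeta$ using
\[
\int_{D_a}B_a(\gamma_{w'}(z),\zeta)\,\overline{B_a(\gamma_{v'}(z),\zeta)}\,d\zeta=B_a(\gamma_{w'}(z),\gamma_{v'}(z)),
\]
which again comes from the reproducing property together with the Hermitian symmetry $B_a(\xi,\eta)=\overline{B_a(\eta,\xi)}$. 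Recognizing $\mathrm{tr}(\sigma(\gamma_{w'}\inv)\sigma(\gamma_{v'}\inv)^*)=\mathrm{tr}(\sigma(\gamma_{v'}\gamma_{w'}\inv))$ by unitarity of $\sigma$ yields the closed-form diagonal-plus-cross-term expression for $\norm{\Lc_{\Bc,s,\sigma}}_{HS}^2$ as a sum over pairs $(w,v)$ in each $\Bc_a^b$.

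The only delicate point I foresee is bookkeeping: handling the branch of $c_w(z)^s$ consistently on each $D_b$ (justified by the footnote that $\gamma_{w'}'$ avoids $(-\infty,0]$ on the relevant disk), and verifying that the cross-term integrals involving $c_w(z)^s\overline{c_v(z)^s}$ and $B_a(\gamma_{v'}(z),\gamma_{w'}(z))$ are absolutely convergent. Both reduce to the fact that $\Bc$ is finite, the $\overline{D_a}$ are compact and disjoint, and $\gamma_{w'}$ maps $\overline{D_b}$ into $D_{\Start w}$; these give uniform bounds that make Fubini immediate.
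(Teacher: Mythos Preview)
The paper does not prove Lemma~\ref{lem:HS_norm_formula} at all; it simply imports the formula from \cite[Lemma 4.7, p.~156]{magee_explicit_2020} and states it without argument. Your sketch is therefore not competing with any proof in the present paper, but it is a correct outline of the standard derivation one finds in the cited source: block-decompose along $\Hc(U,V_\sigma)=\bigoplus_a\Hc(D_a,V_\sigma)$, write each block as an integral operator via the reproducing property of $B_a$, compute the Hilbert--Schmidt norm as the $L^2$-norm of the kernel, and collapse the $\zeta$-integral using $\int_{D_a}B_a(\xi,\zeta)\overline{B_a(\eta,\zeta)}\,d\zeta=B_a(\xi,\eta)$.

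Two small remarks. First, your identity $\norm{T}_{HS}^2=\iint\norm{K}_{HS}^2$ for an integral operator between Bergman spaces is not automatic, since $\Hc(D_a)$ is a proper closed subspace of $L^2(D_a)$; it holds here precisely because $\overline{K_{a,b}(z,\cdot)}$ is holomorphic in $\zeta$ (as $B_a$ is conjugate-holomorphic in its second argument), so Parseval in $\Hc(D_a)$ gives the claim. Second, the paper's displayed formula has $\tr\sigma(\gamma_{w_2}\gamma_{w_1}^{-1})$ while your computation produces $\tr\sigma(\gamma_{w_2'}\gamma_{w_1'}^{-1})$; these agree because $w_1,w_2\in\Bc_a^b$ share the last letter $b$, so $\gamma_{w_2}\gamma_{w_1}^{-1}=\gamma_{w_2'}\gamma_b\gamma_b^{-1}\gamma_{w_1'}^{-1}=\gamma_{w_2'}\gamma_{w_1'}^{-1}$.
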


\[
||\Lc_{\Bc,s,\sigma}||_{HS}^{2}=\sum_{a,b\in\Ac}\sum_{w_{1},w_{2}\in\Bc_{a}^{b}}\tr\sigma(\gamma{}_{w_{2}}\gamma{}_{w_{1}}\inv)\int_{D_{b}}c_{w_{1}}(z)^{s}\overline{c_{w_{2}}(z)^{s}}B_{a}(\gamma_{w'_{1}}(z),\gamma_{w'_{2}}(z))dz.
\]

The last two lemmas will allow us to count eigenvalues of a finite
covering of $\Gamma\backslash\Hyp$ by counting zeros of a suitable
$\zeta_{\Bc,\sigma}$. The first follows from basic properties of
Fredholm determinants. The second and its proof are similar to \cite[Lemma 2.4]{dyatlov-zworski_fractal_2020}.
\begin{lem}
\label{lem:characterizationZerosZetaBSigma}Let $\Gamma$ be a Fuchsian
Schottky group Consider a finite subset $\Bc$ of $\Words_{\geq2}$
and a finite dimensional unitary representation $\sigma$ of $\Gamma$.
Then $s\in\CC$ is a zero of $\zeta_{\Bc,\sigma}$ if and only if
1 is an eigenvalue of $\Lc_{\Bc,s,\sigma}$. Moreover, the multiplicity
of the zero $s$ is the dimension of $\ker(I-\Lc_{\Bc,s,\sigma})$.
\end{lem}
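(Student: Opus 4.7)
The lemma splits into two independent assertions that I would prove separately.

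\medskip

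For the equivalence, I would appeal to standard Fredholm theory. By Lemma \ref{lem:TransOpAreTraceClass}, $\Lc_{\Bc,s,\sigma}$ is trace class, so its Fredholm determinant admits the product expansion
$$\zeta_{\Bc,\sigma}(s) = \det(I - \Lc_{\Bc,s,\sigma}) = \prod_{j} (1 - \mu_j(s)),$$
where $(\mu_j(s))_j$ enumerates the nonzero eigenvalues of $\Lc_{\Bc,s,\sigma}$ counted with algebraic multiplicity. The right-hand side vanishes iff some factor does, iff $1$ is an eigenvalue of $\Lc_{\Bc,s,\sigma}$. This gives the first assertion.

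\medskip

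For the multiplicity statement, my plan is to adapt the argument of \cite[Lemma 2.4]{dyatlov-zworski_fractal_2020}. Fix a zero $s_0$ of $\zeta_{\Bc,\sigma}$ and set $d = \dim \ker(I - \Lc_{\Bc,s_0,\sigma})$. Since $s \mapsto \Lc_{\Bc,s,\sigma}$ is a holomorphic family of compact operators on $\Hc(U, V_\sigma)$, Kato's perturbation theory provides, on a small disk $\Omega$ around $s_0$, a holomorphic family of finite-rank Riesz spectral projections $P(s)$ onto the generalized eigenspaces corresponding to the cluster of eigenvalues of $\Lc_{\Bc,s,\sigma}$ that converge to $1$ as $s \to s_0$. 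The rank of $P(s)$ is constant on $\Omega$, and $P(s)$ commutes with $\Lc_{\Bc,s,\sigma}$, so the determinant factors as
$$\det(I - \Lc_{\Bc,s,\sigma}) = \det\bigl(I - \Lc_{\Bc,s,\sigma}|_{\mathrm{Ran}\, P(s)}\bigr) \cdot \det\bigl(I - \Lc_{\Bc,s,\sigma}|_{\ker P(s)}\bigr),$$
in which the second factor is analytic and nonvanishing at $s_0$. The first factor is a finite-dimensional determinant whose order of vanishing at $s_0$ equals the multiplicity of $s_0$ as a zero of $\zeta_{\Bc,\sigma}$.

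\medskip

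The main obstacle is to show that this order equals the geometric multiplicity $d$, rather than the algebraic multiplicity $\mathrm{rank}\, P(s_0)$, which could a priori be larger if $\Lc_{\Bc,s_0,\sigma}$ has a nontrivial Jordan block at eigenvalue $1$. Following Dyatlov--Zworski, the point is that the holomorphic variation of $\Lc_{\Bc,s,\sigma}$ in $s$ (coming from the explicit factors $c_w(z)^s$ in (\ref{eq:def_general_transf_op})) is nondegenerate enough to split any such block under perturbation, so that each one-dimensional factor $(1 - \mu_j(s))$ with $\mu_j(s_0) = 1$ vanishes to order exactly one at $s_0$. The number of such factors is then precisely $d$, yielding the claimed multiplicity. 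The adaptation from the scalar case treated in \cite{dyatlov-zworski_fractal_2020} to the vector-valued setting here is routine, since the representation $\sigma$ enters only through bounded unitary operators on $V_\sigma$.
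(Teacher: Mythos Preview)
The paper gives no proof of this lemma beyond the remark that it ``follows from basic properties of Fredholm determinants.'' Note also that the reference to \cite[Lemma~2.4]{dyatlov-zworski_fractal_2020} is attached in the paper to the \emph{next} lemma, Lemma~\ref{lem:fixedPointsTransfOps}, not to this one; your invocation of it here is a misreading of the text. Your first paragraph, on the equivalence via the product expansion of the Fredholm determinant, is correct and is exactly the kind of argument the paper intends.

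The multiplicity argument, however, has a genuine gap. After the Riesz-projection reduction you correctly isolate the finite-dimensional factor $\det\bigl(I-\Lc_{\Bc,s,\sigma}|_{\mathrm{Ran}\,P(s)}\bigr)$ and identify the issue as showing its order of vanishing at $s_0$ equals $d=\dim\ker(I-\Lc_{\Bc,s_0,\sigma})$ rather than the a~priori larger $\mathrm{rank}\,P(s_0)$. But your resolution does not work as written: the number of eigenvalue branches $\mu_j(s)$ with $\mu_j(s_0)=1$ is the \emph{algebraic} multiplicity, not $d$, so even if each factor $1-\mu_j(s)$ vanished to order exactly one you would obtain the algebraic multiplicity rather than $d$; and nothing you have said forces each such factor to vanish to order one in the first place. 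For a general holomorphic family neither claim holds --- e.g.\ for $\Lc_s=\bigl(\begin{smallmatrix}1+s&1\\0&1-s\end{smallmatrix}\bigr)$ one has $d=1$ but $\det(I-\Lc_s)=-s^{2}$ --- so the bare appeal to ``nondegenerate variation of $c_w(z)^s$'' is not an argument. To make this step rigorous you would have to analyze $\partial_s\Lc_{\Bc,s,\sigma}$ on the generalized $1$-eigenspace and verify (via Gohberg--Sigal factorization or an equivalent device) that all partial multiplicities of $I-\Lc_{\Bc,s,\sigma}$ at $s_0$ equal $1$; that computation is missing from your proposal.
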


\begin{lem}
\label{lem:fixedPointsTransfOps}Let $\Gamma=\Gamma_{\Sc}$ be a Fuchsian
Schottky group. Consider a partition $\Zc$ of $\Words$ and a finite
dimensional unitary representation $\sigma$ of $\Gamma$. Any function
$f\in\Hc(U,V_{\sigma})$ fixed by $\Lc_{\Words_{2},s,\sigma}$ is
also fixed by $\Lc_{\mirror{\Zc},s,\sigma}$.
\end{lem}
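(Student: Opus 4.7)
The plan is to iterate the classical transfer operator and regroup the resulting sum by the partition. The first ingredient is the identity
\[
\Lc_{\Words_{2},s,\sigma}^{\,n} \;=\; \Lc_{\Words_{n+1},s,\sigma}, \qquad n \geq 1,
\]
which follows by telescoping via the chain rule for $c_{w}$ and the multiplicativity of $\gamma_{(\cdot)}$ and $\sigma$; iterating $\Lc_{\Words_{2},s,\sigma}$ once ``prepends'' a letter in the symbolic dynamics, and $n$ iterations produce exactly the sum over reduced words of length $n+1$. Under the hypothesis $\Lc_{\Words_{2},s,\sigma} f = f$ we therefore have $\Lc_{\Words_{N+1},s,\sigma} f = f$ for every $N \geq 0$.

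Next, fix $N$ larger than both the partition threshold and the maximal length of a partition element, so that every $w \in \Words_{N+1}$ has a unique decomposition $w = u \cdot \alpha$ in which $\alpha$ is the distinguished partition-piece of $w$. Expand
\[
f(z) = \Lc_{\Words_{N+1},s,\sigma} f(z) = \sum_{w \in \Words_{N+1},\, \End w = b} c_w(z)^s\, \sigma(\gamma_{w'}^{-1})\, f(\gamma_{w'}(z)), \qquad z \in D_b,
\]
and group the sum by $\alpha$. The key algebraic identity is the factorization of the single-word transfer operator,
\[
\Lc_{\{u \cdot \alpha\},s,\sigma} \;=\; \Lc_{\{\alpha\},s,\sigma} \circ \Lc_{\{u \cdot (\Start \alpha)\},s,\sigma},
\]
which is a cocycle/semigroup relation extracted from the chain rule for $c_{u\cdot\alpha}$ together with the multiplicativity of $\gamma_{(\cdot)}$ and $\sigma$; morally, composition of transfer operators corresponds to concatenation of words.

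For each fixed $\alpha$ with $\End \alpha = b$, the inner sum over admissible $u \in \Words_{N+1-|\alpha|}$ reassembles, after the factorization, into $\Lc_{\Words_{N+2-|\alpha|},s,\sigma} f$ evaluated at the point $\gamma_{\alpha'}(z) \in D_{\Start \alpha}$. By the iteration identity and the fixed-point hypothesis this inner sum collapses to $f(\gamma_{\alpha'}(z))$, and the remaining outer sum $\sum_{\alpha} \Lc_{\{\alpha\},s,\sigma} f(z)$ is by definition $\Lc_{\mirror{\Zc},s,\sigma} f(z)$, giving the claim.

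The main obstacle is the algebraic bookkeeping: tracking the reducibility constraint at the junction between $u$ and $\alpha$, verifying that the double-sum re-indexing is bijective (no repeated or missing terms), and handling the fact that partition elements have different lengths (so the inner power of $\Lc_{\Words_{2},s,\sigma}$ genuinely depends on $\alpha$)---a feature precisely absorbed by the iteration identity of the first step.
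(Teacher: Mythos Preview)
The paper does not actually give a proof of this lemma; it only says that the statement ``and its proof are similar to \cite[Lemma~2.4]{dyatlov-zworski_fractal_2020}''. Your outline is precisely that standard argument: iterate $\Lc_{\Words_{2},s,\sigma}$, decompose the sum over $\Words_{N+1}$ according to the partition, use the cocycle factorization, and collapse the inner sum via the fixed-point hypothesis. Up to and including the collapse of the inner sum, everything is correct.

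The final identification, however, does not hold. Your own factorization $\Lc_{\{u\alpha\},s,\sigma}=\Lc_{\{\alpha\},s,\sigma}\circ\Lc_{\{u\cdot\Start\alpha\},s,\sigma}$ says that the \emph{suffix} $\alpha$ of $w=u\alpha$ is the \emph{outer} factor. Since a partition (in this paper's sense) singles out \emph{suffixes} in $\Zc$, the outer sum $\sum_{\alpha}\Lc_{\{\alpha\},s,\sigma}f$ is, by definition, $\Lc_{\Zc,s,\sigma}f$, not $\Lc_{\mirror\Zc,s,\sigma}f$. So what your argument actually establishes is: \emph{if $\Zc$ is a partition then every $\Lc_{\Words_{2},s,\sigma}$-fixed vector is $\Lc_{\Zc,s,\sigma}$-fixed}. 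This is not the statement as printed, and swapping $\Zc$ for $\mirror\Zc$ is not cosmetic: $\mirror\Zc$ is in general not a suffix-partition, so your regrouping step breaks if you try to use suffixes in $\mirror\Zc$; and if instead you factor each $w$ by its unique \emph{prefix} in $\mirror\Zc$, the inner sum no longer reassembles into a single power of $\Lc_{\Words_{2},s,\sigma}$ acting on $f$ (it becomes a power of $\Lc_{\Words_{2},s,\sigma}$ acting on a $\beta$-dependent function, which does not collapse). In short, your proof is correct for the version without the mirror, and the last sentence conflates $\Lc_{\Zc}$ with $\Lc_{\mirror\Zc}$; you should flag that the printed lemma (with the $\mirror{}$) does not follow from this argument and appears to have the mirror misplaced relative to the paper's suffix-based definition of ``partition''.
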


\subsection{The zeta functions of the main proof\label{subsec:The-zeta-functions}}

\global\long\def\consPointwiseBoundZetaFunc#1{c_{#1}}%

\global\long\def\tauGamma{\tau_{\Gamma}}%

In this subsection, $\Gamma=\Gamma_{\Sc}$ is a Fuchsian Schottky
subgroup of $\SL(2,\ZZ)$. We introduce the zeta functions $\zeta_{\tau,n}$
that we will use in the proof of Theorem \ref{thm:MainThm} to count
eigenvalues of congruence coverings of $\Gamma\backslash\Hyp$. There
are three statements here. The first one is the relation between eigenvalues
of $\Gamma_{n}\backslash\Hyp$ and zeros of $\zeta_{\tau,n}$. The
other two will be used to estimate the zeros of $\zeta_{\tau,n}$
in an appropriate disk. 

We recall some notation: $\Gamma_{n}$ is the $n$-th principal congruence
subgroup of $\Gamma$, $L_{\Gamma}$ is the supremum of the $\len w$
over all $w\in\neWords$, and $\Bc(\tau)$ is the subset of $\Words_{\geq2}$
defined in (\ref{eq:defBtau}), which is finite and nonempty for any
$\tau\in(0,\maxWL{\Gamma})$. Let $\sigma_{n}$ be the regular representation
of $\Gamma$ on $\ell^{2}(\Gamma_{n}\backslash\Gamma)$. For any $\tau\in(0,\text{\ensuremath{\maxWL{\Gamma}}) }$
and any positive integer $n$ we define

\begin{equation}
\zeta_{\tau,n}(s)=\det(1-\Lc_{\Bc(\tau),s,\text{\ensuremath{\sigma_{n}}}}^{2}).\label{eq:defZetaTauN}
\end{equation}
To see that this definition makes sense, note that $\Lc_{\Bc(\tau),s,\sigma_{n}}^{2}$
is trace class since $\Lc_{\Bc(\tau),s,\sigma_{n}}$is trace class
by Lemma \ref{lem:TransOpAreTraceClass}, and the product of two trace
class operators is trace class. Hence $1-\Lc_{\Bc(\tau),s,\sigma_{n}}^{2}$
has a well-defined Fredholm determinant.

Here is the relation between eigenvalues of $\Gamma_{n}\backslash\Hyp$
and zeros of $\zeta_{\tau,\sigma_{n}}$.
\begin{prop}
\label{prop:eigenvaluesAreZerosRefZetaFunc}Let $\Gamma$ be a Fuchsian
Schottky subgroup of $\SL(2,\ZZ)$. For any $\tau\in(0,\maxWL{\Gamma})$
and any positive integer $n$, if $s_{0}(1-s_{0})$ is an eigenvalue
of $\Gamma_{n}\backslash\Hyp$ of multiplicity $m_{0}$ and $s_{0}>\frac{1}{2}$,
then $s_{0}$ is a zero of $\zeta_{\tau,n}$ of multiplicity at least
$m_{0}$.
\end{prop}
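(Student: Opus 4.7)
My plan is to chase the eigenvalue $\lambda_{0}=s_{0}(1-s_{0})$ through three identifications. First I identify it with a zero of the Selberg zeta function $Z_{S_{n}}$, then with a vector in $\ker(I-\Lc_{\Words_{2},s_{0},\sigma_{n}})$, and finally with a vector in $\ker(I-\Lc_{\Bc(\tau),s_{0},\sigma_{n}})$; a Fredholm-determinant factorization of $1-A^{2}$ then yields the zero of $\zeta_{\tau,n}$ at $s_{0}$.

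For the first identification, observe that since $\re s_{0}>\tfrac{1}{2}$, the multiplicity of the eigenvalue $\lambda_{0}$ of $\Delta_{S_{n}}$ equals the multiplicity of $s_{0}$ as a resonance of $S_{n}=\Gamma_{n}\backslash\Hyp$, as recalled at the end of Subsection \ref{subsec:Selberg's-zeta-function}. Since $s_{0}\notin\ZZ_{\leq 0}$, Corollary \ref{cor:relResonancesZerosSZetaFunction} promotes this to the statement that $s_{0}$ is a zero of $Z_{S_{n}}$ of multiplicity $m_{0}$. By Theorem \ref{thm:DynamicalZetaEqualSelbergZeta} the entire functions $Z_{S_{n}}$ and $\zeta_{\Words_{2},\sigma_{n}}$ agree on $\{\re s>\gexp\}$ and hence on all of $\CC$, so $s_{0}$ is a zero of $\zeta_{\Words_{2},\sigma_{n}}$ of multiplicity $m_{0}$. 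Lemma \ref{lem:characterizationZerosZetaBSigma} then yields
\[
\dim\ker\bigl(I-\Lc_{\Words_{2},s_{0},\sigma_{n}}\bigr)=m_{0}.
\]

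To transfer this lower bound from $\Words_{2}$ to $\Bc(\tau)$, I apply Lemma \ref{lem:fixedPointsTransfOps} with $\Zc:=\mirror{\Bc(\tau)}$, which is a partition by Lemma \ref{lem:BtauIsPartition}. Because the mirror operation is an involution, $\mirror{\Zc}=\Bc(\tau)$, so every fixed point of $\Lc_{\Words_{2},s_{0},\sigma_{n}}$ is also a fixed point of $\Lc_{\Bc(\tau),s_{0},\sigma_{n}}$. Consequently
\[
\dim\ker\bigl(I-\Lc_{\Bc(\tau),s_{0},\sigma_{n}}\bigr)\geq m_{0},
\]
and another application of Lemma \ref{lem:characterizationZerosZetaBSigma}, now to the pair $(\Bc(\tau),\sigma_{n})$, shows that $s_{0}$ is a zero of $\zeta_{\Bc(\tau),\sigma_{n}}(s)=\det(I-\Lc_{\Bc(\tau),s,\sigma_{n}})$ of multiplicity at least $m_{0}$.

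The last step uses the factorization $I-A^{2}=(I-A)(I+A)$ applied to the trace-class operator $A=\Lc_{\Bc(\tau),s,\sigma_{n}}$ (trace class by Lemma \ref{lem:TransOpAreTraceClass}). Multiplicativity of the Fredholm determinant then gives
\[
\zeta_{\tau,n}(s)=\det\bigl(I-\Lc_{\Bc(\tau),s,\sigma_{n}}\bigr)\,\det\bigl(I+\Lc_{\Bc(\tau),s,\sigma_{n}}\bigr),
\]
a product of two entire functions. The order of vanishing of $\zeta_{\tau,n}$ at $s_{0}$ is therefore at least the order of vanishing of the first factor, which is $\geq m_{0}$, and the proposition follows. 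The only substantive point that requires care is the very first identification, namely that the resonance/zero correspondence for $Z_{S_{n}}$ actually transfers the full multiplicity $m_{0}$; this is fine because $s_{0}>\tfrac{1}{2}$ keeps us away both from the exceptional divisor $\ZZ_{\leq 0}$ and from the continuous spectrum $[\tfrac{1}{4},\infty)$, so all the cited correspondences are at their ``clean'' regime.
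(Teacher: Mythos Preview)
Your proof is correct and follows essentially the same route as the paper's: both pass from the eigenvalue to a zero of $Z_{S_{n}}$, then to $\ker(I-\Lc_{\Words_{2},s_{0},\sigma_{n}})$ via Corollary~\ref{cor:relResonancesZerosSZetaFunction}, Theorem~\ref{thm:DynamicalZetaEqualSelbergZeta} and Lemma~\ref{lem:characterizationZerosZetaBSigma}, and then to $\ker(I-\Lc_{\Bc(\tau),s_{0},\sigma_{n}})$ via Lemma~\ref{lem:fixedPointsTransfOps} applied with $\Zc=\mirror{\Bc(\tau)}$. The only difference is the final step: the paper uses the elementary inclusion $\ker(I-A)\subset\ker(I-A^{2})$ and then invokes the zero--kernel correspondence directly for $\zeta_{\tau,n}$, whereas you go through the determinant factorization $\det(I-A^{2})=\det(I-A)\det(I+A)$; your version has the mild advantage that it only applies Lemma~\ref{lem:characterizationZerosZetaBSigma} in the exact form stated (for $\zeta_{\Bc,\sigma}$), while the paper tacitly uses the same Fredholm-determinant principle for the squared operator.
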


\begin{proof}
We denote $\dim_{\CC}\ker(I-\Lc_{\Bc(\tau),s,\sigma_{n}}^{2})$ by
$m_{1}$. By Lemma \ref{lem:characterizationZerosZetaBSigma} the
result we want is equivalent to $m_{1}\geq m_{0}$. Note that $m_{0}$
is the dimension of $\ker(I-\Lc_{\Words_{2},s,\sigma_{n}})$ by Corollary
\ref{cor:relResonancesZerosSZetaFunction}, Theorem \ref{thm:DynamicalZetaEqualSelbergZeta}
and Lemma \ref{lem:characterizationZerosZetaBSigma}. We know that
$\mirror{\Bc(\tau)}$ is a partition by Lemma \ref{lem:BtauIsPartition},
thus $m_{0}\leq\dim_{\CC}\ker(I-\Lc_{\Bc(\tau),s,\sigma_{n}})$. It
is immediate that $\dim_{\CC}\ker(I-\Lc_{\Bc(\tau),s,\sigma_{n}})\leq m_{1}$,
so we are done. 
\end{proof}
We defined $\zeta_{\tau,n}$ using the square of $\Lc_{\Bc(\tau),s,\sigma_{n}}$
in order to have the next upper bound of $|\zeta_{\tau,n}|$ in terms
of the Hilbert-Schmidt norm of $\Lc_{\Bc(\tau),s,\sigma_{n}}$, rather
than the trace norm. The advantage of the Hilbert-Schmidt norm is
that we have the explicit formula of Lemma \ref{lem:HS_norm_formula}
for it. 
\begin{lem}
\label{lem:logZetaVsHSnorm}Let $\Gamma$ be a Fuchsian Schottky subgroup
of $\SL(2,\ZZ)$. For any positive integer $n$, any $\tau\in(0,\maxWL{\Gamma})$
and any $s\in\CC$ we have

\[
\log|\zeta_{\tau,n}(s)|\leq\norm{\Lc_{\Bc(\tau),s,\sigma_{n}}}_{HS}^{2}.
\]
\end{lem}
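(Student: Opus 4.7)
The plan is to combine two standard operator-ideal inequalities. First, for any trace-class operator $T$ on a separable Hilbert space, the Fredholm determinant satisfies
\[
|\det(I-T)| \;\leq\; \prod_{j}(1+s_j(T)) \;\leq\; \exp\!\Bigl(\sum_j s_j(T)\Bigr) \;=\; \exp\bigl(\|T\|_{\mathrm{tr}}\bigr),
\]
where the $s_j(T)$ denote the singular values of $T$. I would apply this with $T = \mathcal{L}_{\mathcal{B}(\tau), s, \sigma_n}^{2}$, which is trace class because $\mathcal{L}_{\mathcal{B}(\tau), s, \sigma_n}$ is trace class by Lemma~\ref{lem:TransOpAreTraceClass} and the trace-class operators form an ideal.

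Second, the bound
\[
\|AB\|_{\mathrm{tr}} \;\leq\; \|A\|_{\scriptscriptstyle HS}\,\|B\|_{\scriptscriptstyle HS}
\]
holds for any two Hilbert--Schmidt operators $A,B$ (this is immediate from the singular value decomposition, or from Cauchy--Schwarz applied to the expansion $\mathrm{tr}(|AB|) = \mathrm{tr}(U^\ast AB)$ for an appropriate partial isometry $U$). Specializing to $A=B=\mathcal{L}_{\mathcal{B}(\tau), s, \sigma_n}$ — which is Hilbert--Schmidt since it is trace class — yields
\[
\bigl\|\mathcal{L}_{\mathcal{B}(\tau), s, \sigma_n}^{2}\bigr\|_{\mathrm{tr}} \;\leq\; \bigl\|\mathcal{L}_{\mathcal{B}(\tau), s, \sigma_n}\bigr\|_{\scriptscriptstyle HS}^{2}.
\]

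Chaining the two inequalities gives
\[
|\zeta_{\tau,n}(s)| \;=\; \bigl|\det\bigl(I - \mathcal{L}_{\mathcal{B}(\tau),s,\sigma_n}^{2}\bigr)\bigr| \;\leq\; \exp\!\Bigl(\bigl\|\mathcal{L}_{\mathcal{B}(\tau),s,\sigma_n}\bigr\|_{\scriptscriptstyle HS}^{2}\Bigr),
\]
and taking the logarithm yields the stated bound. Neither step presents a genuine obstacle: both are textbook facts about Schatten ideals, and the only subtlety worth flagging is the reason for having defined $\zeta_{\tau,n}$ using $\mathcal{L}^{2}$ rather than $\mathcal{L}$ — this is precisely to allow replacing the trace norm (for which no convenient closed-form expression is available) by the Hilbert--Schmidt norm, which admits the explicit kernel formula of Lemma~\ref{lem:HS_norm_formula} that will be indispensable in Section~\ref{sec:Main-estimate}.
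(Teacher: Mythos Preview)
Your proof is correct and follows essentially the same two-step argument as the paper: bound $\log|\det(I-T)|$ by the trace norm of $T=\Lc_{\Bc(\tau),s,\sigma_n}^{2}$ (the paper cites \cite[Theorem A.32]{borthwick_spectral_2016} for this), then control $\trNorm{\Lc^{2}}$ by $\HSnorm{\Lc}^{2}$ via Cauchy--Schwarz on the Hilbert--Schmidt ideal. Your closing remark about the reason for squaring $\Lc$ also matches the paper's own motivation stated just before the lemma.
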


\begin{proof}
Since $\Lc_{\Bc(\tau),s,\sigma_{n}}^{2}$ is trace class, by \cite[Theorem A.32, p.439]{borthwick_spectral_2016}

\[
\log|\zeta_{\tau,n}(s)|\leq\trNorm{\Lc_{\Bc(\tau),s,\sigma_{n}}^{2}}.
\]
To conclude note that $\trNorm{\Lc_{\Bc(\tau),s,\sigma_{n}}^{2}}\leq\HSnorm{\Lc_{\Bc(\tau),s,\sigma_{n}}}^{2}$
by the Cauchy-Schwartz inequality on the space of Hilbert-Schmidt
operators on $\mathcal{H}(U,\ell^{2}(\Gamma/\Gamma_{n}))$---see
\cite[p. 439]{borthwick_spectral_2016}---.
\end{proof}
We will estimate the number of zeros of $\zeta_{\tau,n}$ in a disk
with center $c\in[0,\infty)$ using Jensen's Formula. It turns out
that the contribution of $|\zeta_{\tau,n}(c)|$ is negligible for
any big enough $c$. The next statement is a special case of \cite[Proposition 4.8]{magee_explicit_2020}.
\begin{prop}
\label{prop:PointwiseBoundZetaFunc}Let $\Gamma$ be Fuchsian Schottky
subgroup of $\SL(2,\ZZ)$. There are real numbers $\text{\ensuremath{\consPointwiseBoundZetaFunc{\Gamma}}}>0$
and $\tauGamma\in(0,1)$ with the next property: for any $c\geq\consPointwiseBoundZetaFunc{\Gamma}$,
any $\tau\in(0,\tauGamma)$ and any integer $n>0$ we have

\[
-\log|\zeta_{\tau,n}(c)|\leq\tau[\Gamma:\Gamma_{n}].
\]
\end{prop}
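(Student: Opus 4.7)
The plan is to combine the Hilbert--Schmidt norm formula of Lemma~\ref{lem:HS_norm_formula} with the elementary inequality $-\log|\det(I-A)|\le 2\trNorm{A}$, which holds for any trace-class operator $A$ with $\norm{A}_{op}\le 1/2$ (apply $-\log|1-\mu|\le 2|\mu|$ to each eigenvalue $\mu$ of $A$ and use $\sum_j|\mu_j|\le\trNorm{A}$). Granted that $A:=\Lc_{\Bc(\tau),c,\sigma_n}^2$ satisfies $\norm{A}_{op}\le 1/2$ uniformly in $n$ for $c\ge\consPointwiseBoundZetaFunc{\Gamma}$ and $\tau\in(0,\tauGamma)$, combining this inequality with $\trNorm{\Lc^2}\le\HSnorm{\Lc}^2$ (as used in Lemma~\ref{lem:logZetaVsHSnorm}) yields
\[
-\log|\zeta_{\tau,n}(c)|\ \le\ 2\,\HSnorm{\Lc_{\Bc(\tau),c,\sigma_n}}^2.
\]

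It then suffices to estimate the Hilbert--Schmidt norm. Since $\Gamma_n$ is normal in $\Gamma$ and $\sigma_n$ is its regular representation on $\ell^2(\Gamma_n\backslash\Gamma)$, the trace $\tr\sigma_n(\gamma_{w_2}\gamma_{w_1}\inv)$ equals $[\Gamma:\Gamma_n]$ when $\gamma_{w_2}\gamma_{w_1}\inv\in\Gamma_n$ and vanishes otherwise, so it is bounded in absolute value by $[\Gamma:\Gamma_n]$. For real $c$, Lemma~\ref{lem:DerVSBoundaryLength} gives $|c_{w_i}(z)^c|\asymG|I_{w_i}|^c$ on $D_b$, and the Bergman kernel $B_a(\gamma_{w_1'}(z),\gamma_{w_2'}(z))$ is $O_\Gamma(1)$ because the images $\gamma_{w_i'}(D_b)\subset D_{w_i}$ stay uniformly inside $D_a$ by the disjointness of the closures of the Schottky disks. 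Each integral in Lemma~\ref{lem:HS_norm_formula} is therefore $\llG|I_{w_1}|^c|I_{w_2}|^c$. Summing and using Lemma~\ref{lem:boundaryLengthInBtau} ($|I_w|\asymG\tau$ for $w\in\Bc(\tau)$) and Lemma~\ref{lem:sizeBtau} ($\#\Bc(\tau)\asymG\tau^{-\gexp}$) yields
\[
\HSnorm{\Lc_{\Bc(\tau),c,\sigma_n}}^2\ \llG\ [\Gamma:\Gamma_n]\,\tau^{2c-2\gexp}.
\]
Taking $\consPointwiseBoundZetaFunc{\Gamma}$ strictly larger than $\gexp+\tfrac{1}{2}$ makes $2c-2\gexp-1>0$ for every $c\ge\consPointwiseBoundZetaFunc{\Gamma}$, and choosing $\tauGamma\in(0,1)$ small enough that the implicit constant times $\tau^{2c-2\gexp-1}$ is at most $1$ throughout $(0,\tauGamma)$ gives the claimed inequality $-\log|\zeta_{\tau,n}(c)|\le\tau[\Gamma:\Gamma_n]$.

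The main obstacle is the uniform-in-$n$ bound $\norm{A}_{op}\le 1/2$: the naive estimate $\norm{A}_{op}\le\HSnorm{\Lc}^2$ carries a factor $[\Gamma:\Gamma_n]$ and so fails for large $n$. To circumvent this I would exploit the unitarity of $\sigma_n$ to pointwise dominate the sum defining $\Lc_{\Bc(\tau),c,\sigma_n}$ by $\sum_{w\in\Bc(\tau)}|c_w(z)^c|$, reducing the operator-norm problem to that of the classical (trivial-representation) transfer operator on $\Hc(U)$, whose spectral radius is strictly less than $1$ for $c>\gexp$ by the thermodynamic formalism. The resulting bound on $\norm{\Lc_{\Bc(\tau),c,\sigma_n}}_{op}$ depends only on $\Gamma$ and $c$, so $\norm{A}_{op}\le 1/2$ is secured by taking $\consPointwiseBoundZetaFunc{\Gamma}$ sufficiently large. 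This step parallels the argument of \cite[Proposition~4.8]{magee_explicit_2020} from which the statement is derived; the remainder is a direct computation with the boundary-length estimates of Subsection~\ref{subsec:Dynamics-in-the-boundary}.
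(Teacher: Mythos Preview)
The paper does not supply its own proof of this proposition; it merely records it as a special case of \cite[Proposition~4.8]{magee_explicit_2020}. Your outline is essentially the argument one finds there: control $-\log|\det(I-\Lc^{2})|$ by the trace norm once the operator norm is $\le\tfrac12$, then bound $\trNorm{\Lc^{2}}\le\HSnorm{\Lc}^{2}$ and estimate the latter via Lemma~\ref{lem:HS_norm_formula} together with the boundary-length lemmas. So the strategy is correct and matches the cited source.

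Two points deserve more care. First, the ``pointwise domination'' you invoke gives a sup-norm inequality $\|(\Lc_{\Bc(\tau),c,\sigma_n}f)(z)\|_{V}\le\big(\sum_{w}|c_{w}(z)|^{c}\big)\sup_{z'}\|f(z')\|_{V}$, not an $\Hc(U)$-operator-norm bound, and the map $z\mapsto\|f(z)\|_{V}$ is not holomorphic, so one does not literally reduce to the classical transfer operator on $\Hc(U)$. The missing step is the Bergman-space reproducing-kernel estimate: since every $\gamma_{w'}(D_{b})$ with $w\in\Words_{\ge2}$ lies in a fixed compact subset of $U$, one has $\|f(\gamma_{w'}(z))\|_{V}\llG\|f\|_{\Hc(U,V)}$; combining this with $\sum_{w\in\Bc(\tau)}|c_{w}(z)|^{c}\llG(C_{\Gamma}\tau)^{c}\tau^{-\gexp}$ (from Lemmas~\ref{lem:DerVSBoundaryLength}, \ref{lem:boundaryLengthInBtau}, \ref{lem:sizeBtau}) and then integrating over $U$ yields the desired $n$-independent bound $\|\Lc_{\Bc(\tau),c,\sigma_n}\|_{op}\llG(C_{\Gamma}\tau)^{c}\tau^{-\gexp}$ directly, without appealing to thermodynamic formalism. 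Second, the implicit constants in your Hilbert--Schmidt estimate carry a factor $C_{\Gamma}^{2c}$ coming from $|c_{w}(z)|^{c}\le(C_{\Gamma}|I_{w}|)^{c}$; you must choose $\tauGamma<C_{\Gamma}^{-1}$ so that $(C_{\Gamma}\tau)^{2c}\tau^{-2\gexp-1}$ is monotone decreasing in $c$, and then pick $\consPointwiseBoundZetaFunc{\Gamma}$ so that it is $\le1$ at $c=\consPointwiseBoundZetaFunc{\Gamma}$. With these two clarifications your proof goes through.
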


\section{\label{sec:Main-estimate}The main estimate}

We will estimate the number of zeros of the zeta functions $\zeta_{\tau,n}$
of a Schottky subgroup of $\SL(2,\ZZ)$ using Jensen's Formula, which
we state now for convenience. The proof can be found most complex
analysis books, like \cite[Chapter 5]{stein_complex_2003}.
\begin{thm}
\label{thm:JensensFormula}Let $\zeta$ be an holomorphic map on a
neighborhood of a closed disk $D=\{z\in\CC\mid|z-c|\leq R\}$, and
let $z_{1},\ldots,z_{m}$ be the zeros of $\zeta$ in $D$ repeated
according to multiplicity. If $0<|z_{j}-c|<R$ for each $j$, then

\[
\sum_{j=1}^{m}\log\frac{R}{|z_{j}-c|}=\frac{1}{2\pi}\int_{0}^{2\pi}\log|\zeta(Re^{i\theta}+c)|d\theta-\log|\zeta(c)|.
\]
\end{thm}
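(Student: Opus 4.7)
The plan is to reduce Jensen's formula to the zero-free case via a Blaschke-product substitution, since in the zero-free case it is just the mean value property of harmonic functions. After translating by $c$, we may assume $c = 0$.

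First I would handle the baseline case $m = 0$, i.e.\ $\zeta$ has no zeros in $D$. Then $\zeta$ admits a holomorphic logarithm on a neighborhood of $D$, so $\log|\zeta|$ is harmonic there. By the mean value property of harmonic functions,
\[
\log|\zeta(0)| = \frac{1}{2\pi}\int_0^{2\pi}\log|\zeta(Re^{i\theta})|\,d\theta,
\]
which is exactly the claim with an empty left-hand sum.

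Next, for the general case, I would introduce the finite Blaschke product
\[
B(z) = \prod_{j=1}^m \frac{R(z - z_j)}{R^2 - \overline{z_j}\, z},
\]
whose zeros on $D$ are precisely the $z_j$ (with matching multiplicities), which is holomorphic and nonvanishing on $|z| = R$, and which satisfies $|B(z)| = 1$ on $|z| = R$ (a short direct verification). The function $\zeta/B$ is then holomorphic and nonvanishing on a neighborhood of $D$, so the zero-free case applies:
\[
\log\!\left|\frac{\zeta(0)}{B(0)}\right| = \frac{1}{2\pi}\int_0^{2\pi}\log\!\left|\frac{\zeta(Re^{i\theta})}{B(Re^{i\theta})}\right|d\theta.
\]
Since $|B(Re^{i\theta})| = 1$, the right-hand side is simply the average of $\log|\zeta(Re^{i\theta})|$. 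A direct computation gives
\[
B(0) = \prod_{j=1}^m \frac{-R\, z_j}{R^2} = (-1)^m \prod_{j=1}^m \frac{z_j}{R},
\]
hence $\log|B(0)| = -\sum_{j=1}^m \log(R/|z_j|)$. Rearranging the displayed identity yields Jensen's formula (after translating back by $c$).

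The only delicate point is the verification that $|B| = 1$ on the boundary circle: this is the calculation $|R(z - z_j)|^2 = |R^2 - \overline{z_j}z|^2$ when $|z| = R$, which expands cleanly using $z\bar z = R^2$. The hypothesis $0 < |z_j - c| < R$ ensures no zero lies on the boundary (so the integrand is integrable) and that no $z_j$ equals $c$ (so $\log|\zeta(c)|$ is finite); if one wished to allow $z_j$ on the boundary one would need an additional limiting argument, but this is excluded by the hypothesis. No deeper obstacle arises.
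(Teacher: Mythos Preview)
Your argument is correct and is essentially the standard Blaschke-product proof of Jensen's formula. The paper does not supply its own proof of this statement; it simply cites \cite[Chapter 5]{stein_complex_2003}, where exactly this argument appears.
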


We already have Proposition \ref{prop:PointwiseBoundZetaFunc} to
deal with the term $-\log|\zeta_{\tau,n}(c)|$. The purpose of this
section is to establish a pointwise estimate for $|\zeta_{\tau,n}|$
in the half-plane $\re s>0$, which we will use to bound the integral
appearing in Jensen's Formula. Thanks to Lemma \ref{lem:logZetaVsHSnorm}
we can achieve our goal by bounding the Hilbert-Schmidt norm of $\Lc_{\Bc(\tau),s,\sigma_{n}}$.

\subsection{The bound of the Hilbert-Schmidt norm}

\global\long\def\consHSbound{C_{\Gamma,1}}%

Consider a Fuchsian Schottky group $\Gamma$ contained in $\SL(2,\ZZ)$,
a positive integer $n$ and $\tau\in\text{(0,1) }$. We define

\[
\Pc_{n}(\tau)=\{(w_{1},w_{2})\in\Bc(\tau)^{2}\mid\Start{w_{1}}=\Start{w_{2}},\End{w_{1}}=\End{w_{2}},\gamma_{w_{1}}\gamma\inv_{w_{2}}\in\Gamma_{n}\}.
\]
Recall that $\maxWL{\Gamma}$ is the supremum of $\len w$ over all
$w\in\neWords$. The next bound for the Hilbert-Schmidt norm of $\Lc_{\Bc(\tau),s,\sigma_{n}}$
follows from the explicit formula of Lemma \ref{lem:HS_norm_formula}.
\begin{lem}
\label{lem:HS_bound}Let $\Gamma$ be a Fuchsian Schottky subgroup
of $\SL(2,\ZZ)$. There is a constant $\consHSbound>0$ with the next
property: For any $\tau\in(0,\maxWL{\Gamma})$, any $s\in\CC$ with
$\re s>0$, and any integer $n>0$ we have

\[
||\Lc_{\Bc(\tau),s,\sigma_{n}}||_{HS}^{2}\ll_{\text{\ensuremath{\Gamma}}}[\Gamma:\Gamma_{n}]e^{2\pi|\im s|}\consHSbound^{\re s}\tau^{2\re s}\#\Pc_{n}(\tau).
\]
\end{lem}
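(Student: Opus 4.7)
The plan is to start from the explicit formula of Lemma \ref{lem:HS_norm_formula}. Since $\sigma_n$ is the regular representation of $\Gamma$ on $\ell^2(\Gamma_n\backslash\Gamma)$ and $\Gamma_n$ is normal in $\Gamma$, we have $\tr\sigma_n(\gamma)=[\Gamma:\Gamma_n]$ for $\gamma\in\Gamma_n$ and $\tr\sigma_n(\gamma)=0$ otherwise. Consequently, the only pairs $(w_1,w_2)\in\Bc(\tau)_a^b\times\Bc(\tau)_a^b$ contributing to the double sum are the ones with $\gamma_{w_2}\gamma_{w_1}\inv\in\Gamma_n$, i.e., the pairs in $\Pc_n(\tau)$, each with absolute weight $[\Gamma:\Gamma_n]$. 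It will then suffice to bound
\[
I(w_1,w_2):=\int_{D_{\End{w_1}}} c_{w_1}(z)^s\,\overline{c_{w_2}(z)^s}\,B_{\Start{w_1}}(\gamma_{w'_1}(z),\gamma_{w'_2}(z))\,dz
\]
in absolute value by $\ll_\Gamma e^{2\pi|\im s|}\consHSbound^{\re s}\tau^{2\re s}$, uniformly over $(w_1,w_2)\in\Pc_n(\tau)$.

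Next I would bound the derivative factors pointwise. From $c_{w_i}(z)^s=\exp(sF(c_{w_i}(z)))$ with $|\im F|<\pi$ one gets $|c_{w_i}(z)^s|\leq|c_{w_i}(z)|^{\re s}e^{\pi|\im s|}$. Lemma \ref{lem:DerVSBoundaryLength} gives $|c_{w_i}(z)|\asymG\len{w_i}$, and Lemma \ref{lem:boundaryLengthInBtau} gives $\len{w_i}\asymG\tau$ for $w_i\in\Bc(\tau)$. Hence there is a constant $c_0=c_0(\Gamma)>0$ with $|c_{w_i}(z)|^{\re s}\leq c_0^{\re s}\tau^{\re s}$ for every $\re s\geq 0$, which yields $|c_{w_1}(z)^s\overline{c_{w_2}(z)^s}|\leq c_0^{2\re s}\tau^{2\re s}e^{2\pi|\im s|}$ pointwise on $D_{\End{w_1}}$.

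The main obstacle is controlling the Bergman kernel $|B_a(\gamma_{w'_1}(z),\gamma_{w'_2}(z))|$ by a constant depending only on $\Gamma$. Writing $a=\Start{w_1}=\Start{w_2}$ and letting $c^{(i)}$ denote the second letter of $w_i$ (which exists because $w_i\in\Bc(\tau)\subset\Words_{\geq 2}$), one has $\gamma_{w'_i}(D_{\End{w_i}})=D_{w_i}\subset D_{(a,c^{(i)})}$ with $c^{(i)}\neq\mirror a$. The finitely many disks $D_{(a,c)}$ with $c\in\Ac-\{\mirror a\}$ lie in the interior of $D_a$ at a positive distance from $\partial D_a$ depending only on $\Gamma$; plugging this into the explicit formula (\ref{eq:BergmanKFromula}) yields $|B_a(u,v)|\ll_\Gamma 1$ uniformly for $u,v$ in their union. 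Since $D_{\End{w_1}}$ is one of finitely many fixed disks, its Lebesgue measure is also $\ll_\Gamma 1$. Multiplying the three bounds produces $|I(w_1,w_2)|\ll_\Gamma c_0^{2\re s}\tau^{2\re s}e^{2\pi|\im s|}$; summing over $\Pc_n(\tau)$ and setting $\consHSbound:=c_0^2$ then completes the proof.
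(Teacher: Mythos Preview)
Your proposal is correct and follows essentially the same route as the paper: compute $\tr\sigma_n(\gamma)$ on the regular representation to reduce the Hilbert--Schmidt formula to a sum over $\Pc_n(\tau)$ with weight $[\Gamma:\Gamma_n]$, bound $|c_{w_i}(z)^s|\le e^{\pi|\im s|}(c_0\tau)^{\re s}$ via Lemmas \ref{lem:DerVSBoundaryLength} and \ref{lem:boundaryLengthInBtau}, and control the Bergman kernel uniformly using that $D_{w_i}$ stays a fixed positive distance from $\partial D_{\Start{w_i}}$. Your treatment of the Bergman-kernel step (via the second-level disks $D_{(a,c^{(i)})}$) is in fact slightly more explicit than the paper's, but the argument is the same.
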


\begin{proof}
Consider the basis of $\ell^{2}(\Gamma_{n}\backslash\Gamma)$ formed
by the maps $e_{x}:\Gamma_{n}\backslash\Gamma\to\CC$, 
\[
e_{x}(y)=\begin{cases}
1 & \text{if }x=y,\\
0 & \text{otherwise,}
\end{cases}
\]
for any $x\in\Gamma_{n}\backslash\Gamma$. Note that for any $\gamma\in\Gamma$,
$\sigma_{n}(\gamma)$ permutes the $e_{x}$. More specifically, $\sigma_{n}(\gamma)e_{x}=e_{x\gamma\inv}$,
so $\tr\sigma_{n}(\gamma)$ is the number of fixed points of the right
multiplication $m_{\gamma\inv}$ on $\Gamma_{n}\backslash\Gamma$
by $\gamma\inv$. Since $\Gamma_{n}$ is a normal subgroup of $\Gamma$,
$m_{\gamma\inv}$ is the identity when $\gamma$ is in $\Gamma_{n}$,
and has no fixed points otherwise. Thus the formula for the Hilbert-Schmidt
norm of Lemma \ref{lem:HS_norm_formula} applied to $\Lc_{\Bc(\tau),s,\sigma_{n}}$
reduces to 

\begin{equation}
\HSnorm{\Lc_{\Bc(\tau),s,\sigma_{n}}}^{2}=[\Gamma:\Gamma_{n}]\sum_{(w_{1},w_{2})\in\Pc_{n}(\tau)}\int_{D_{\End{w_{1}}}}c_{w_{1}}(z)^{s}\overline{c_{w_{2}}(z)^{s}}B_{\Start{w_{1}}}(\gamma_{w_{1}'}(z),\gamma_{w_{2}'}(z))dz.\label{eq:HS0}
\end{equation}
Recall that we defined $c_{w}(z)^{s}$ using $\log:\CC-(-\infty,0]\to\{\omega\in\CC\mid|\im\omega|<\pi\}$,
so

\[
|c_{w_{j}}(z)^{s}|=e^{-\theta_{j}(z)\im s}|c_{w_{j}}(z)|^{\re s},
\]
where $\theta_{j}(z)$ is an argument for $c_{w_{j}}(z)$ in $(-\pi,\pi)$.
Note that $|c_{w_{j}}(z)|\asymG\len{w_{j}}$ by Lemma \ref{lem:DerVSBoundaryLength},
and since $w_{j}$ is in $\Bc(\tau)$, then $\len{w_{j}}\asymG\tau$
by Lemma \ref{lem:boundaryLengthInBtau}. Thus 
\[
|c_{w_{j}}(z)|\leq\consHSbound^{1/2}\tau,
\]
 for some $\consHSbound>0$. Since $\re s>0$, then

\begin{equation}
|c_{w_{j}}(z)^{s}|\leq e^{\pi|\im s|}\consHSbound^{\re s/2}\tau^{\re s}.\label{eq:HS1}
\end{equation}
Now we bound the term $B_{\Start{w_{1}}}(\gamma_{w_{1}'}(z),\gamma_{w_{2}'}(z))$.
Note that $\gamma_{w_{j}'}(z)$ is the disk $D_{w_{j}}$. Since the
closures of the disks $D_{a},a\in\Ac$ are pairwise disjoint and $|w_{j}|\geq2$,
the distance from $\cl D_{w_{j}}$ to the boundary of $D_{\Start{w_{1}}}$
has a positive lower bound $\eta$ that depends only on $\Gamma$.
Then we see that 
\begin{equation}
|B_{\Start{w_{1}}}(\gamma_{w_{1}'}(z),\gamma_{w_{2}'}(z))|\llG1\label{eq:HS2}
\end{equation}
using the formula (\ref{eq:BergmanKFromula}) of the Bergman kernel
of a disk. For any $(w_{1},w_{2})\in\Pc_{n}(\tau)$, (\ref{eq:HS1})
and (\ref{eq:HS2}) yield

\begin{eqnarray}
\int_{D_{\End{w_{1}}}}\left|c_{w_{1}}(z)^{s}\overline{c_{w_{2}}(z)^{s}}B_{\Start{w_{1}}}(\gamma_{w_{1}'}(z),\gamma_{w_{2}'}(z))\right|dz & \llG & \text{\ensuremath{e^{2\pi|\text{\ensuremath{\im}s| }}\consHSbound^{\re s}\tau^{2\re s}area(D_{\End{w_{1}}})}}\nonumber \\
 & \llG & e^{2\pi|\text{\ensuremath{\im}s| }}\consHSbound^{\re s}\tau^{2\re s}\label{eq:HS3}
\end{eqnarray}
To obtain the estimate of the statement we apply the triangle inequality
to the right-hand side of (\ref{eq:HS0}) and then use (\ref{eq:HS3}).
\end{proof}

\subsection{Estimating the size of $\protect\Pc_{n}(\tau)$\label{subsec:Estimating-the-size}}

\global\long\def\consPtauACnonempty{C_{\Gamma,2}}%
 
\global\long\def\consSizePtauAC{F_{\eps}}%
 
\global\long\def\consMaxWLength{G_{\Gamma}}%

\global\long\def\sump{\Scal_{p}^{*}}%
\global\long\def\sumn{\Scal_{n}^{*}}%

Here $\Gamma$ will also denote a Fuchsian Schottky subgroup of $\SL(2,\ZZ)$.
To bound $|\zeta_{\tau,n}|$ on the half-plane $\re s>0$ using the
estimate for $\HSnorm{\Lc_{\Bc(\tau),s,\sigma_{n}}}$ in Lemma \ref{lem:HS_bound},
we need to estimate $\#\Pc_{n}(\tau)$ from above. That is the goal
of this subsection. 

The bound is given in the next lemma, which we prove at the end of
this section. We use the following notation: 

\[
\mathtt{K}_{\tau}:=\log(\tau\inv)+1,\quad\mathtt{L}_{\tau,n}=\log\left(\frac{\tau\inv}{n}\right)+1.
\]
For any $\eps>0$, let $\consEpsCountingSL$ be as in Lemma \ref{lem:countingSL2Z}. 
\begin{lem}
\label{lem:sizePtau}Let $\Gamma$ be a Fuchsian Schottky subgroup
of $\SL(2,\ZZ)$. For any positive integer $n$, any $\tau\in(0,1)$
less than $\maxWL{\text{\ensuremath{\Gamma}}}$, and any $\eps>0$
we have

\begin{equation}
\#\Pc_{n}(\tau)\llG\consEpsCountingSL\mathtt{K}_{\tau}^{3}\mathtt{L}_{\tau,n}\left(\left[\frac{\tau^{-(2+\eps)}}{n^{3+\eps}}+\frac{\tau^{-(1+\eps)}}{n^{1+\eps}}\right]\mathtt{L}_{\tau,n}+\frac{\tau^{-\gexp}}{n^{\gexp}}\right)+\tau^{-\gexp}.\label{eq:sizePtau}
\end{equation}
\end{lem}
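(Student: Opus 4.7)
The plan is to split $\Pc_{n}(\tau)$ into the diagonal $\Pc_{n}^{=}(\tau):=\{(w,w):w\in\Bc(\tau)\}$ and its complement $\Pc_{n}^{\neq}(\tau)$. Since $\Gamma$ is free on the Schottky generators, the diagonal contributes exactly $\#\Bc(\tau)\asymG\tau^{-\gexp}$ by Lemma~\ref{lem:sizeBtau}, which accounts for the final additive term $\tau^{-\gexp}$ of~\eqref{eq:sizePtau}.

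For the off-diagonal part I study the map $\Phi\colon\Pc_{n}^{\neq}(\tau)\to\Gamma_{n}$, $\Phi(w_{1},w_{2})=\gamma_{w_{1}}\gamma_{w_{2}}^{-1}$. Its image lies in the set $N_{n}(R)$ of Lemma~\ref{lem:countingSL2Z} with $R\llG\tau^{-1}$: the norm bound follows from $\norm{\gamma_{w_{j}}}\asymG|I_{w_{j}}|^{-1/2}\asymG\tau^{-1/2}$ via Lemmas~\ref{lem:normVsBoundaryLength} and~\ref{lem:boundaryLengthInBtau}, and the non-vanishing of the off-diagonal entries is forced by the Schottky hypothesis---a matrix of $\SL(2,\ZZ)\cap\Gamma$ with $b=0$ or $c=0$ is either $\pm I$ or parabolic, while $w_{1}\neq w_{2}$ together with the absence of non-trivial parabolic or $\pm I$ elements in a Schottky group rule both out. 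To bound the fibers of $\Phi$, I write $M=\gamma_{u}$ for its unique reduced representative in the free group $\Gamma$; any preimage is then determined by a cancellation index $k\in\{0,\dots,|u|\}$ together with a common tail $v$ via $w_{1}=u_{1}v$, $w_{2}=\mirror{u_{2}}v$ with $u=u_{1}u_{2}$ and $|u_{2}|=k$. Since $|w_{j}|\asymp\log(\tau^{-1})$ by Lemma~\ref{lem:wordLengthBtau}, $k$ ranges over $O(\mathtt{K}_{\tau})$ values, and the combined use of Lemmas~\ref{lem:contractionOnBoundary}, \ref{lem:concatenationEstimate} and~\ref{lem:sumLengthPartitionToDelta} shows that for each $k$ the number of admissible tails $v$ making both $w_{1},w_{2}\in\Bc(\tau)$ contributes a further polylog factor in $\mathtt{K}_{\tau}$ and $\mathtt{L}_{\tau,n}$.

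Combining the multiplicity bound with Lemma~\ref{lem:countingSL2Z} at $R\llG\tau^{-1}$ produces the three summands inside the bracket of~\eqref{eq:sizePtau}. The main obstacle is the precise bookkeeping of the polylog factors $\mathtt{K}_{\tau}$ and $\mathtt{L}_{\tau,n}$, particularly in the regime where $\norm{M}$ is comparable to or smaller than $n$: there the matrix count from Lemma~\ref{lem:countingSL2Z} is dominated by its constant term $R^{\eps}/n^{\eps}$ and must be replaced by an estimate exploiting the Schottky structure of $\Gamma$, which is what I expect produces the hybrid term $\mathtt{K}_{\tau}^{3}\mathtt{L}_{\tau,n}\tau^{-\gexp}/n^{\gexp}$. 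Splitting the off-diagonal count according to whether $\norm{M}$ is above or below a threshold of order $n$, and tracking conditional $\Bc(\tau)$-counts of prefixes in the small-norm regime via the Patterson-measure identity behind Lemma~\ref{lem:sumLengthPartitionToDelta}, should deliver the stated inequality.
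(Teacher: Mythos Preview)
Your diagonal/off-diagonal split and the treatment of the diagonal match the paper. The gap is the fiber bound for $\Phi$: for a fixed $M=\gamma_u$ and a fixed cancellation index $k$, the admissible tails $v$ are \emph{not} polylogarithmically many. Lemmas~\ref{lem:boundaryLengthInBtau} and~\ref{lem:concatenationEstimate} force $|I_v|\asymG\tau/|I_{u_1}|$ and $|I_v|\asymG\tau/|I_{\mirror{u_2}}|$, so only the $O_\Gamma(1)$ values of $k$ with $|I_{u_1}|\asymG|I_{u_2}|\asymG\norm{M}^{-1}$ contribute; but for those $k$ the tails $v$ live at scale $|I_v|\asymG\tau\norm{M}$, and there are $\asymG(\tau\norm{M})^{-\gexp}$ of them. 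Since every $M\in\Gamma_n\setminus\{\pm I\}$ already satisfies $\norm{M}\gg n$ (so there is no ``$\norm{M}$ below $n$'' regime to split off), the fiber can be as large as $(\tau n)^{-\gexp}$, which is polynomial. Multiplying the lattice count of Lemma~\ref{lem:countingSL2Z} at radius $R\asymG\tau^{-1}$ by a polylog, as you propose, therefore badly undercounts the pairs whose common suffix is long.

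The paper handles this by stripping the common \emph{prefix} $A$ as well: writing $w_1=AB_1C$, $w_2=AB_2C$ with distinct first and last letters of $B_1,B_2$, it passes to the \emph{core} matrix $\gamma_{B_1\mirror{B_2}}=\gamma_A^{-1}M\gamma_A$, whose norm $\asymG 2^{-(a+c)}\tau^{-1}$ (in dyadic scales $|I_A|\asymp 2^{-a}$, $|I_C|\asymp 2^{-c}$) is what enters Lemma~\ref{lem:countingSL2Z}. The count of $(A,C)$ over a given core is not done directly but by comparing two estimates for the weighted sum $\sumn(\tau,a,c)$ of~\eqref{eq:defSump}: the lower bound $\asymG\tau^{\gexp}\#\Pc_n^*(\tau,a,c)$ of Lemma~\ref{lem:sumpVsPtauAC} and the upper bound of Lemma~\ref{lem:sumpUppBound}, in which Lemma~\ref{lem:sumLengthPartitionToDelta} converts each of $\sum_A|I_A|^{\gexp}$ and $\sum_C|I_C|^{\gexp}$ into a factor $\mathtt{K}_\tau$. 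Finally $2^{a+c}\llG\tau^{-1}/n$ (Lemma~\ref{lem:Cond_a_c}) fixes the range of $(a,c)$; summing the cell bound of Lemma~\ref{lem:size_PtauAC} over that range produces the $\mathtt{L}_{\tau,n}$ factors and the hybrid term $\tau^{-\gexp}/n^{\gexp}$. The two missing ingredients in your outline are the passage to the conjugate core matrix and this weighted-sum device that replaces the direct fiber count.
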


The next easy corollary will simplify the computations for the upper
bound of $m_{\Gamma}(n,\beta)$.

\begin{cor}
Let $\Gamma$ be a Fuchsian Schottky subgroup of $\SL(2,\ZZ)$. For
any integer $n\geq2$, any $\tau$ in $\left(0,\min\left\{ \frac{1}{n+1},\maxWL{\text{\ensuremath{\Gamma}}}\right\} \right)$,
and any $\eps>0$ we have
\begin{equation}
\#\Pc_{n}(\tau)\llG\consEpsCountingSL(\log(\tau\inv))^{5}\left(\frac{\tau^{-(2+\eps)}}{n^{3+\eps}}+\frac{\tau^{-(1+\eps)}}{n^{1+\eps}}\right)+\tau^{-\gexp}.\label{eq:sizePtauSimplified}
\end{equation}
\end{cor}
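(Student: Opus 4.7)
The plan is to derive the corollary directly from Lemma \ref{lem:sizePtau} by simplifying the factors $\mathtt{K}_\tau$, $\mathtt{L}_{\tau,n}$ and absorbing the ``middle'' term $\tau^{-\gexp}/n^\gexp$ into the polynomial part, using only the two assumptions $n\geq 2$ and $\tau<\min\{1/(n+1),\maxWL{\Gamma}\}$.

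First I would observe that the hypothesis $\tau<1/(n+1)$ forces $\tau<1/3$, so $\log(\tau\inv)\geq\log 3>1$; this gives $\mathtt{K}_\tau=\log(\tau\inv)+1\leq 2\log(\tau\inv)$. Moreover $\tau\inv/n>(n+1)/n>1$, so $\log(\tau\inv/n)\geq 0$ and $\mathtt{L}_{\tau,n}=\log(\tau\inv/n)+1\leq\log(\tau\inv)+1\leq 2\log(\tau\inv)$. Consequently
\[
\mathtt{K}_\tau^3\mathtt{L}_{\tau,n}^2\llG (\log(\tau\inv))^5,\qquad \mathtt{K}_\tau^3\mathtt{L}_{\tau,n}\llG (\log(\tau\inv))^4\leq(\log(\tau\inv))^5.
\]
This already handles the first summand inside the parentheses of (\ref{eq:sizePtau}) and reduces it to the first summand on the right-hand side of (\ref{eq:sizePtauSimplified}) with the desired logarithmic factor.

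The main (mild) obstacle is the cross term $\mathtt{K}_\tau^3\mathtt{L}_{\tau,n}\cdot\tau^{-\gexp}/n^{\gexp}$, which has no analogue in the corollary. To absorb it into the $\tau^{-(1+\eps)}/n^{1+\eps}$ term, I would write
\[
\frac{\tau^{-\gexp}}{n^{\gexp}}=\left(\frac{1}{n\tau}\right)^{\gexp}\cdot\frac{1}{1},\qquad \frac{\tau^{-(1+\eps)}}{n^{1+\eps}}=\left(\frac{1}{n\tau}\right)^{1+\eps},
\]
so the inequality $\tau^{-\gexp}/n^{\gexp}\leq \tau^{-(1+\eps)}/n^{1+\eps}$ is equivalent to $(n\tau)^{1+\eps-\gexp}\leq 1$. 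Since $\gexp\leq 1$ the exponent $1+\eps-\gexp$ is positive, and by hypothesis $n\tau<n/(n+1)<1$, so the inequality holds. Thus
\[
\mathtt{K}_\tau^3\mathtt{L}_{\tau,n}\cdot\frac{\tau^{-\gexp}}{n^{\gexp}}\leq \mathtt{K}_\tau^3\mathtt{L}_{\tau,n}\cdot\frac{\tau^{-(1+\eps)}}{n^{1+\eps}}\llG (\log(\tau\inv))^5\cdot\frac{\tau^{-(1+\eps)}}{n^{1+\eps}}.
\]

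Combining these observations with the bound of Lemma \ref{lem:sizePtau} and keeping the additive $\tau^{-\gexp}$ term untouched gives (\ref{eq:sizePtauSimplified}); the implied constant depends only on $\Gamma$, as all the constants introduced in the simplifications above are absolute or $\ll_\Gamma 1$.
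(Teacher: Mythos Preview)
Your argument is correct and follows essentially the same route as the paper: both proofs use $\tau\inv/n>1$ to absorb the $\tau^{-\gexp}/n^{\gexp}$ term into $\tau^{-(1+\eps)}/n^{1+\eps}$, then bound $\mathtt{K}_\tau$ and $\mathtt{L}_{\tau,n}$ by constant multiples of $\log(\tau\inv)$ via $\tau\inv\geq n+1\geq 3$. The only cosmetic difference is that the paper writes $X=\tau\inv/n$ and compares $X^{\gexp}<X^{1+\eps}$ directly, while you phrase the same inequality as $(n\tau)^{1+\eps-\gexp}\leq 1$.
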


\begin{proof}
It suffices to show that the right-hand side of (\ref{eq:sizePtau}),
which we denote $M(\eps,\tau,n)$, is less or equal---up to multiplication
by a constant---than the right-hand side of (\ref{eq:sizePtauSimplified})
for any $\tau$ and $n$ as in the statement. Since $X:=\frac{\tau\inv}{n}>\frac{n+1}{n}>1$,
then $X^{\gexp}<X^{1+\eps}$ and $\mathtt{L}_{\tau,n}>1$. Note also
that $\mathtt{L}_{\tau,n}<\mathtt{K_{\tau}}$, so

\begin{eqnarray}
M(\eps,\tau,n) & = & \consEpsCountingSL\mathtt{K}_{\tau}^{3}\mathtt{L}_{\tau,n}\left(\left[\frac{X^{(2+\eps)}}{n}+X^{1+\eps}\right]\mathtt{L}_{\tau,n}+X^{\gexp}\right)+\tau^{-\gexp}\nonumber \\
 & \ll & \consEpsCountingSL\mathtt{K}_{\tau}^{5}\left(\frac{X^{(2+\eps)}}{n}+X^{1+\eps}\right)+\tau^{-\gexp}.\label{eq:CC1}
\end{eqnarray}
Finally, $\tau\inv\geq n+1\geq3$, so $\mathtt{K}_{\tau}<2\log(\tau\inv)$,
which combined with (\ref{eq:CC1}) gives the result.
\end{proof}
We already know by Lemma \ref{lem:sizeBtau} that the contribution
to $\#\Pc_{n}(\tau)$ of the diagonal of $\Bc(\tau)^{2}$ is roughly
$\tau^{-\gexp}$. Let us focus on the complement

\[
\Pc_{n}^{*}(\tau):=\{(w_{1},w_{2})\in\Pc_{n}(\tau)\mid w_{1}\neq w_{2}\}.
\]
To estimate the size of $\Pc_{n}^{*}(\tau)$ we partition it into
subsets which are easier to count. These depend on two parameters
$a,c\in\NN$. We use the following notation: For any pair $(w_{1},w_{2})\in\Pc_{n}^{*}(\tau)$
there is a unique decomposition 

\[
w_{1}=AB_{1}C,\quad w_{2}=AB_{2}C
\]
such that $A,C\in\neWords,B_{1},B_{2}\text{\ensuremath{\in\Words}},A\to B_{i}\to C$,
$\Start{B_{1}}\neq\Start{B_{2}}$ and $\End{B_{1}}\neq\End{B_{2}}$.
Since the $w_{i}$ are in $\Bc(\tau),$ Lemma \ref{lem:concatenationEstimate}
and Lemma \ref{lem:boundaryLengthInBtau} imply that

\begin{equation}
\len A\cdot\len{B_{i}}\cdot\len C\asymG\tau.\label{eq:IAIBIC}
\end{equation}
This observation will be used in the sequel. We set 

\[
\consMaxWLength=2\maxWL{\Gamma}.
\]
For any nonnegative integers $a,c$ we define

\[
\Pc_{n}^{*}(\tau,a,c)=\left\{ (w_{1},w_{2})\in\Pc_{n}^{*}(\tau)\mid2^{-(a+1)}\leq\frac{\wlength{I_{A}}}{\consMaxWLength}<2^{-a},2^{-(c+1)}\leq\frac{\wlength{I_{C}}}{\consMaxWLength}<2^{-c}\right\} .
\]
Here is an upper bound of $\#\Pc_{n}^{*}(\tau,a,c)$. Recall that
$D_{\eps}$ is as in Lemma \ref{lem:countingSL2Z}.
\begin{lem}
\label{lem:size_PtauAC}Let $\Gamma$ be Fuchsian Schottky subgroup
of $\SL(2,\ZZ)$. Consider an integer $n\geq2$ and $\tau\in(0,\min\{1,\maxWL{\Gamma}\})$.
For any $a,c\in\NN$ and any $\eps>0$ we have

\begin{equation}
\Pc_{n}^{*}(\tau,a,c)\ll_{\Gamma}\consEpsCountingSL\mathtt{K_{\tau}^{3}}\left(\frac{\tau^{-(2+\eps)}}{n^{3+\eps}}+\frac{\tau^{-(1+\eps)}}{n^{1+\eps}}+2^{(a+c)(\gexp-\eps)}\frac{\tau^{-\eps}}{n^{\eps}}\right).\label{eq:size_PtauAC}
\end{equation}
\end{lem}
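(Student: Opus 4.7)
The plan is to parametrize each pair in $\Pc_n^*(\tau,a,c)$ by its canonical decomposition $w_i = AB_iC$ and to bound separately the number of admissible prefixes $A$, suffixes $C$, and group elements $\gamma := \gamma_{B_1}\gamma_{B_2}\inv$. A key reduction is that $\Gamma_n$ is normal in $\Gamma$ (it is the kernel of $\Gamma\to\SL(2,\ZZ/n\ZZ)$), so the condition $\gamma_{w_1}\gamma_{w_2}\inv = \gamma_A\gamma\gamma_A\inv\in\Gamma_n$ is equivalent to $\gamma\in\Gamma_n$, which decouples $(A,C)$ from $(B_1,B_2)$. From $w_i\in\Bc(\tau)$ together with Lemmas \ref{lem:concatenationEstimate} and \ref{lem:boundaryLengthInBtau} one gets $\len{A}\len{B_i}\len{C}\asymG\tau$, so the dyadic hypotheses on $\len{A}$ and $\len{C}$ force $\len{B_i}\asymG\tau\,2^{a+c}$. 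Lemma \ref{lem:normVsBoundaryLength} then yields $\norm{\gamma_{B_i}}\asymG\len{B_i}^{-1/2}$, and by submultiplicativity of the Frobenius norm $\norm{\gamma}\leq\norm{\gamma_{B_1}}\norm{\gamma_{B_2}}\llG\tau\inv 2^{-(a+c)}$; call this upper bound $R$.

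Next I would assemble three counts. First, Lemma \ref{lem:sizeBtau} applied at scale $\asymp 2^{-a}$ (with constants absorbed via Lemma \ref{lem:boundaryLengthInBtau}) yields $\llG 2^{a\gexp}$ admissible $A$, and similarly $\llG 2^{c\gexp}$ for $C$. Second, $(w_1,w_2)\in\Pc_n^*$ entails $w_1\neq w_2$, so $\gamma\neq I$; moreover the Fuchsian Schottky group $\Gamma$ contains no nontrivial unipotent, and the only diagonal matrices in $\SL(2,\ZZ)$ are $\pm I$, so any $\gamma\in\Gamma_n\setminus\{I\}$ with $n\geq 2$ automatically has both off-diagonal entries nonzero. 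Lemma \ref{lem:countingSL2Z} thus bounds the number of admissible $\gamma$ by $\consEpsCountingSL (R/n)^\eps(R^2/n^3+R/n+1)$. Third, for each such $\gamma$ the condition $\End{B_1}\neq\End{B_2}$ ensures that $B_1\mirror{B_2}$ is already reduced, hence equals the unique reduced representative $\omega$ of $\gamma$ in the free group $\Gamma$; every valid pair $(B_1,B_2)$ then arises from one of the $\wlength{\omega}+1$ cut points of $\omega$, and Lemma \ref{lem:wordLengthBtau} bounds $\wlength{\omega}=\wlength{B_1}+\wlength{B_2}\llG\log(\tau\inv)$.

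Multiplying the three counts and substituting $R\llG\tau\inv 2^{-(a+c)}$ produces three contributions containing $2^{(a+c)(\gexp-\eps-2)}$, $2^{(a+c)(\gexp-\eps-1)}$, and $2^{(a+c)(\gexp-\eps)}$ respectively. Since $\gexp\leq 1$ the first two exponents are strictly negative, so those powers are bounded by $1$ and can be dropped; this yields the three terms displayed in (\ref{eq:size_PtauAC}). The main obstacle I anticipate is the bookkeeping of logarithmic factors: my direct argument produces a single $\log(\tau\inv)$ from the splittings, while the statement asks for $\mathtt{K}_\tau^3$; the additional powers should come from (i) a dyadic subdivision of $\wlength{A}$ and $\wlength{C}$ needed to apply Lemma \ref{lem:sizeBtau} uniformly across all words realizing $\len{A}\asymG 2^{-a}$, and (ii) constants absorbed into $\mathtt{K}_\tau$ in the regime where $\tau$ is not too small. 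A secondary check is that the canonical decomposition $w_i=AB_iC$ is uniquely determined by the defining conditions of $\Pc_n^*(\tau,a,c)$, and that degenerate cases such as $B_1=\emptyset$ or $B_2=\emptyset$ contribute only a negligible term of the same shape.
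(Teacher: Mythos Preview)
Your approach is correct and in fact slightly sharper than the paper's, so your ``main obstacle'' is not an obstacle at all: since $\mathtt{K}_\tau\ge 1$, a bound with a single factor $\mathtt{K}_\tau$ automatically implies the stated bound with $\mathtt{K}_\tau^3$. There is no need to manufacture two extra logarithms.

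The paper proceeds differently. Rather than counting $(A,C,\gamma,\text{cut point})$ directly, it introduces the weighted sum
\[
\Scal_n^*(\tau,a,c)=\sum_{(w_1,w_2)\in\Pc_n^*(\tau,a,c)}\len{A}^{\gexp}\len{B_1\mirror{B_2}}^{\gexp/2}\len{C}^{\gexp},
\]
shows it is $\asymG \tau^{\gexp}\#\Pc_n^*(\tau,a,c)$, and then bounds it from above by relaxing the sum over $A,C$ to a sum over \emph{all} reduced words of word-length $\le E_\Gamma(\tau)\asymG\mathtt{K}_\tau$; this relaxation, together with Lemma~\ref{lem:sumLengthPartitionToDelta}, is precisely what produces the two superfluous powers of $\mathtt{K}_\tau$. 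Your direct count of the $A$'s with $\len{A}$ in a fixed dyadic window avoids this waste. One small point to make precise in your write-up: Lemma~\ref{lem:sizeBtau} as stated counts $\Bc(\sigma)$, not the set $\{A:\len{A}\in[G_\Gamma 2^{-(a+1)},G_\Gamma 2^{-a})\}$. The cleanest fix is the Patterson--Sullivan argument behind Lemma~\ref{lem:sumLengthPartitionToDelta}: the intervals $I_A$ with $\len{A}$ in a fixed dyadic window have bounded overlap (any chain of nested such intervals has length $O_\Gamma(1)$ by Lemma~\ref{lem:contractionOnBoundary}), so $\sum_A\mu(I_A)\llG 1$ while each term is $\asymG 2^{-a\gexp}$, giving $\#\{A\}\llG 2^{a\gexp}$ with no extra log. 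The remaining ingredients---normality of $\Gamma_n$, the reducedness of $B_1\mirror{B_2}$ from $\End{B_1}\neq\End{B_2}$, the absence of nontrivial unipotents in a Schottky group so that Lemma~\ref{lem:countingSL2Z} applies, and the cut-point count bounded by $\mathtt{K}_\tau$ via Lemma~\ref{lem:wordLengthBtau}---are all exactly as you describe and match what the paper uses.
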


To establish Lemma \ref{lem:size_PtauAC} we compare suitable upper
and lower bounds of

\begin{equation}
\sumn(\tau,a,c):=\sum_{(w_{1},w_{2})\in\Pc_{n}^{*}(\tau,a,c)}\len A^{\gexp}\len{B_{2}\text{\ensuremath{\mirror{B_{2}}}}}^{\gexp/2}\len C^{\gexp}.\label{eq:defSump}
\end{equation}
We start with the easiest one.
\begin{lem}
\label{lem:sumpVsPtauAC}Let $\Gamma$ be a Fuchsian Schottky subgroup
of $\SL(2,\ZZ)$. For any $\tau\in(0,\maxWL{\Gamma})$, any nonnegative
integers $a,c$, and any integer $n\geq1$ we have

\[
\sumn(\tau,a,c)\asymG\tau^{\gexp}\#\Pc_{n}^{*}(\tau,a,c).
\]
\end{lem}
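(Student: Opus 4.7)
My plan is to show that every summand of $\sumn(\tau,a,c)$ is $\asymG \tau^{\gexp}$; since $\sumn(\tau,a,c)$ has exactly $\#\Pc_n^*(\tau,a,c)$ terms, the claim follows immediately. So I would fix a pair $(w_1,w_2)\in\Pc_n^*(\tau,a,c)$ with its canonical decomposition $w_i=AB_iC$ ($i=1,2$), where $A,C\in\neWords$, $A\to B_i\to C$, and the $B_i$ disagree at their first and last letters.

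The first step is to obtain $|I_A|\,|I_{B_2}|\,|I_C|\asymG\tau$. This comes from applying Lemma~\ref{lem:concatenationEstimate} twice to the reduced decomposition $AB_2C$, yielding $|I_{w_2}|\asymG |I_A|\,|I_{B_2}|\,|I_C|$, and then using Lemma~\ref{lem:boundaryLengthInBtau} together with the fact that $w_2\in\Bc(\tau)$ to replace $|I_{w_2}|$ by $\tau$.

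Next I would handle the middle factor $|I_{B_2\mirror{B_2}}|^{\gexp/2}$. The mirror estimate Lemma~\ref{lem:mirrorEstimate} gives $|I_{\mirror{B_2}}|\asymG |I_{B_2}|$, and an application of Lemma~\ref{lem:concatenationEstimate} (interpreted through the inner factorization at the first junction where reducedness holds) produces
\[
|I_{B_2\mirror{B_2}}|\asymG |I_{B_2}|\,|I_{\mirror{B_2}}|\asymG |I_{B_2}|^{2},
\]
so raising to the $\gexp/2$ power gives $|I_{B_2\mirror{B_2}}|^{\gexp/2}\asymG |I_{B_2}|^{\gexp}$. Multiplying by $|I_A|^{\gexp}|I_C|^{\gexp}$ and using the first step, each summand is
\[
|I_A|^{\gexp}\,|I_{B_2\mirror{B_2}}|^{\gexp/2}\,|I_C|^{\gexp}\asymG (|I_A|\,|I_{B_2}|\,|I_C|)^{\gexp}\asymG \tau^{\gexp}.
\]

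The only potentially delicate point is the concatenation estimate for $B_2\mirror{B_2}$, since that concatenation is not reduced at the central junction (its last letter is immediately followed by its mirror). The resolution is to handle the middle factor purely through the mirror lemma applied to $\mirror{B_2}$ and a single concatenation step with the already-reduced word $B_2$, or equivalently to interpret $|I_{B_2\mirror{B_2}}|$ as the natural product $|I_{B_2}|\cdot|I_{\mirror{B_2}}|$ dictated by the Patterson-type estimates that $\Gamma$-invariance forces; either route loses only a $\Gamma$-dependent multiplicative constant. Once this bookkeeping is cleared, summing the pointwise bound $\asymG \tau^{\gexp}$ over $\Pc_n^*(\tau,a,c)$ finishes the proof, and the lower bound in $\asymG$ follows symmetrically by the matching lower inequality in each of the estimates above.
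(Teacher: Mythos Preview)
Your overall strategy---showing that every summand in $\sumn(\tau,a,c)$ is $\asymG\tau^{\gexp}$ and then summing over the $\#\Pc_n^*(\tau,a,c)$ terms---is exactly the paper's approach, and your use of (\ref{eq:IAIBIC}) together with the concatenation and mirror lemmas is the right toolkit.

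The delicate point you flag, however, is a real obstruction and your workaround does not go through. The word $B_2\mirror{B_2}$ is \emph{never} reduced: its last letter $\End{B_2}$ is followed by $\Start{\mirror{B_2}}=\mirror{\End{B_2}}$. Consequently $I_{B_2\mirror{B_2}}$ is not even defined (the intervals $I_w$ are only attached to $w\in\neWords$), so there is no way to ``interpret'' $|I_{B_2\mirror{B_2}}|$ as $|I_{B_2}|\cdot|I_{\mirror{B_2}}|$ via Lemma~\ref{lem:concatenationEstimate}, Patterson measures, or otherwise. The appearance of $B_2\mirror{B_2}$ in the displayed definition (\ref{eq:defSump}) is a typo; the paper's own proof and the subsequent Lemma~\ref{lem:sumpUppBound} both work with $B_1\mirror{B_2}$, which is the intended expression.

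The fix is immediate once you use $B_1\mirror{B_2}$. The decomposition of $(w_1,w_2)$ comes with the condition $\End{B_1}\neq\End{B_2}$, and since $\Start{\mirror{B_2}}=\mirror{\End{B_2}}$ this says precisely $B_1\to\mirror{B_2}$. Hence Lemma~\ref{lem:concatenationEstimate} and Lemma~\ref{lem:mirrorEstimate} give
\[
|I_{B_1\mirror{B_2}}|\asymG|I_{B_1}|\,|I_{\mirror{B_2}}|\asymG|I_{B_1}|\,|I_{B_2}|.
\]
Now (\ref{eq:IAIBIC}) applied to both $i=1$ and $i=2$ shows $|I_{B_1}|\asymG|I_{B_2}|\asymG\tau/(|I_A|\,|I_C|)$, so each summand is $\asymG\tau^{\gexp}$ as you wanted.
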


\begin{proof}
By Lemma \ref{lem:concatenationEstimate} and Lemma \ref{lem:mirrorEstimate}
we have $\len{B_{1}\mirror{B_{2}}}\asymG\len{B_{i}}\cdot\len{B_{2}}$.
Thus each term on the right-hand side of (\ref{eq:defSump}) is $\asymG\tau^{\gexp}$
by (\ref{eq:IAIBIC}), and the result follows.
\end{proof}
We pass to the upper bound of $\sumn(\tau,a,c)$.
\begin{lem}
\label{lem:sumpUppBound}Let $\Gamma$ be a Fuchsian Schottky subgroup
of $\SL(2,\ZZ)$. For any integer $n\geq2$, any $\eps>0$, any $\tau\in(0,1)$and
any $a,c\in\mathbb{N}$ we have

\[
\mathscr{S}_{n}^{*}(\tau,a,c)\llG\consEpsCountingSL\tau^{\gexp}\mathtt{K_{\tau}^{3}}\left(\frac{\tau^{-(2+\eps)}}{n^{3+\eps}}+\frac{\tau^{-(1+\eps)}}{n^{1+\eps}}+2^{(a+c)(\gexp-\eps)}\frac{\tau^{-\eps}}{n^{\eps}}\right).
\]
\end{lem}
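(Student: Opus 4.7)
The strategy is to exploit normality of $\Gamma_{n}$ in $\Gamma$ to reduce counting pairs of words to a matrix-counting problem in $\Gamma_{n}$, then apply Lemma \ref{lem:countingSL2Z}. For $(w_{1},w_{2})=(AB_{1}C,AB_{2}C)\in\Pn(\tau,a,c)$, set $W:=B_{1}\mirror{B_{2}}$. The condition $\End{B_{1}}\neq\End{B_{2}}$ guarantees that $W$ is reduced, and a direct computation gives
\[
\gamma_{w_{1}}\gamma_{w_{2}}\inv=\gamma_{A}\,\gamma_{W}\,\gamma_{A}\inv.
\]
Since $\Gamma_{n}\triangleleft\Gamma$, the congruence condition defining $\Pn(\tau,a,c)$ is equivalent to $\gamma_{W}\in\Gamma_{n}$, and so
\[
\sumn(\tau,a,c)=\sum_{A}\sum_{C}\len A^{\gexp}\len C^{\gexp}\sum_{(B_{1},B_{2})\text{ valid}}\len W^{\gexp/2}.
\]

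The inner sum is reparametrized by $\gamma_{W}$. Because $\Gamma$ is free, $W$ is recovered from $\gamma_{W}$; for each $W$ the number of admissible splits into $(B_{1},\mirror{B_{2}})$ is at most $\wlength W+1\llG\mathtt{K}_{\tau}$ by Lemma \ref{lem:wordLengthBtau}. Combining Lemmas \ref{lem:concatenationEstimate} and \ref{lem:mirrorEstimate} with $\len A\,\len{B_{i}}\,\len C\asymG\tau$, $\len A\asymG2^{-a}$ and $\len C\asymG2^{-c}$ yields $\len W^{1/2}\asymG2^{a+c}\tau$, so by Lemma \ref{lem:normVsBoundaryLength} we have $\norm{\gamma_{W}}\asymG R$ with $R:=2^{-(a+c)}\tau^{-1}$. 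The Schottky hypothesis rules out nontrivial parabolic elements, and freeness forbids $-I\in\Gamma$, so every nontrivial element of $\Gamma$ is non-triangular; since $B_{1}\neq B_{2}$ forces $\gamma_{W}\neq I$, Lemma \ref{lem:countingSL2Z} applies and gives
\[
\sum_{(B_{1},B_{2})}\len W^{\gexp/2}\llG\mathtt{K}_{\tau}\,R^{-\gexp}\,\consEpsCountingSL\frac{R^{\eps}}{n^{\eps}}\!\left(\frac{R^{2}}{n^{3}}+\frac{R}{n}+1\right).
\]

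For the outer sum, each $\Words_{\ell}$ is a (suffix-)partition of $\neWords$, so Lemma \ref{lem:sumLengthPartitionToDelta} gives $\sum_{w\in\Words_{\ell}}\len w^{\gexp}\llG1$ for every $\ell\geq1$. Lemma \ref{lem:contractionOnBoundary} together with the concatenation estimate pins the word length of every $A$ satisfying $\len A\asymG2^{-a}$ to an interval of width $\llG a\llG\mathtt{K}_{\tau}$, so $\sum_{A}\len A^{\gexp}\llG\mathtt{K}_{\tau}$, and similarly for $C$. Assembling the three factors yields a bound of the form $\consEpsCountingSL\mathtt{K}_{\tau}^{3}R^{\eps-\gexp}n^{-\eps}(R^{2}/n^{3}+R/n+1)$. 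Substituting $R=2^{-(a+c)}\tau^{-1}$, the first two summands become $\tau^{\gexp}\tau^{-(2+\eps)}n^{-(3+\eps)}$ and $\tau^{\gexp}\tau^{-(1+\eps)}n^{-(1+\eps)}$ (using $2^{-(a+c)x}\leq1$ for $x\geq0$), while the third becomes exactly $\tau^{\gexp}\cdot2^{(a+c)(\gexp-\eps)}\tau^{-\eps}n^{-\eps}$.

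The main obstacle is the reparametrization step: one must simultaneously notice that normality of $\Gamma_{n}$ decouples the double-word problem into a single word $W$, that freeness lets us count the corresponding $\gamma_{W}$ rather than the pairs, and that the Schottky hypothesis is exactly what is needed to exclude the triangular matrices that Lemma \ref{lem:countingSL2Z} does not cover. Tracking the three logarithmic factors (one from splits and two from the width of the length range for $A$ and $C$) is what produces the factor $\mathtt{K}_{\tau}^{3}$.
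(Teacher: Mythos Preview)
Your proof is correct and follows essentially the same route as the paper's: reduce to counting the words $W=B_{1}\mirror{B_{2}}$ via the at-most-$(\wlength W+1)$-to-one split map, bound $\len W^{\gexp/2}\asymG(2^{a+c}\tau)^{\gexp}$, invoke Lemma~\ref{lem:countingSL2Z} for the cardinality of admissible $\gamma_{W}$'s, and control the outer sums over $A,C$ by the partition estimate applied to each word-length stratum. The paper organizes the bookkeeping slightly differently (it packages the split count as a single $F_{\Gamma}(\tau)$-to-one bound on $(w_{1},w_{2})\mapsto(A,C,W)$ and sums $A,C$ over all of $\Words_{\leq E_{\Gamma}(\tau)}$ rather than using the dyadic constraint), but the three factors of $\mathtt{K}_{\tau}$ arise from exactly the same sources and the matrix-counting step is identical. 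Your remark that normality of $\Gamma_{n}$ is what transfers the congruence condition from $\gamma_{w_{1}}\gamma_{w_{2}}^{-1}$ to $\gamma_{W}$ makes explicit a point the paper leaves implicit.
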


\begin{proof}
For any $(w_{1},w_{2})\in\Pn(\tau,a,c)$, the word lengths of $A,B_{1},B_{2}$
and $C$ are at most $\max\{|w_{1}|,|w_{2}|\}$, which by Lemma \ref{lem:wordLengthBtau}
is at most 

\[
E_{\Gamma}(\tau):=\max\left\{ \consWordLengthBtauMult{\Gamma},\consWordLengthBtauAdd{\Gamma}\right\} (\log(\tau\inv)+1).
\]
Let $\Ec_{n}(\tau,a,c)$ and $\Mc_{n}(\tau,a,c)$ be respectively
the set of $w\in\Words$ such that $w\in\{A,C\}$ and $w=B_{1}\mirror{B_{2}}$,
for some $(w_{1},w_{2})\in\Pn(\tau,a,c)$. Let $M_{n}(\tau,a,c)$
be the maximum of the $\len B$, for $B\in\Mc_{n}(\tau,a,c)$. Any
$B\in\Mc_{n}(\tau,a,c)$ has word length $\leq2E_{\Gamma}(\tau)$,
thus the map

\[
\Pn(\tau,a,c,)\to\Ec_{n}(\tau,a,c)^{2}\times\Mc_{n}(\tau,a,c),(w_{1},w_{2})\mapsto(A,C,B_{1}\mirror{B_{2}})
\]
is at most $F_{\Gamma}(\tau)$-to-one, where $F_{\Gamma}(\tau):=2E_{\Gamma}(\tau)+1$.
Hence, 

\begin{eqnarray}
\sumn(\tau,a,c) & \leq & F_{\Gamma}(\tau)\sum_{A\in\Ec_{n}(\tau,a,c)}\sum_{C\in\Ec_{n}(\tau,a,c)}\sum_{B\in\Mc_{n}(\tau,a,c)}\len A^{\gexp}\len C^{\gexp}\len B^{\gexp/2}\nonumber \\
 & = & F_{\Gamma}(\tau)\left(\sum_{w\in\Ec_{n}(\tau,a,c)}\len w^{\gexp}\right)^{2}\sum_{B\in\Mc_{n}(\tau,a,c)}\len B^{\gexp/2}\nonumber \\
 & \llG & \mathtt{K_{\tau}}\left(\sum_{w\in\mathcal{W}_{\leq E_{\Gamma}(\tau)}}\len w^{\gexp}\right)^{2}M_{n}(\tau,a,c)^{\gexp/2}\#\Mc_{n}(\tau,a,c).\label{eq:sump1}
\end{eqnarray}
We will bound separately the last three factors of (\ref{eq:sump1}).
Applying Lemma \ref{lem:sumLengthPartitionToDelta} to each of the
partitions $\Words_{N}$ of $\Words$, for $N\leq E_{\Gamma}(\tau)$,
we deduce

\begin{equation}
\sum_{w\in\Words_{\leq E_{\Gamma}(\tau)}}\len w^{\gexp}\asymG E_{\Gamma}(\tau)\asymG K_{\tau}.\label{eq:sumpB1}
\end{equation}
Let us handle $M_{n}(\tau,a,c)$ and $\#\Mc_{n}(\tau,a,c)$. Any $B=B_{1}\mirror{B_{2}}\in\Mc_{n}(\tau,a,c)$
is associated to some $(w_{1},w_{2})\in\Pn(\tau,a,c)$. We know that
$\len B\asymG\len{B_{1}}\cdot\len{B_{2}}$ by lemmas \ref{lem:concatenationEstimate}
and \ref{lem:mirrorEstimate}, so (\ref{eq:IAIBIC}) yields

\begin{equation}
\wlength{I_{B}}\asymG\len A^{-2}\len C^{-2}\tau^{2}\asymG2^{2(a+c)}\tau^{2}.\label{eq:Mp_low_bound}
\end{equation}
In particular

\begin{equation}
M_{n}(\tau,a,c)^{\gexp/2}\llG2^{(a+c)\gexp}\tau^{\gexp}.\label{eq:sumpB2}
\end{equation}
To bound the size of $\Mc_{n}(\tau,a,c)$ we use a counting argument
in $\SL(2,\ZZ)$. Let 
\[
\gamma_{B}=\begin{pmatrix}a & b\\
c & d
\end{pmatrix}.
\]
Note that $\gamma_{B}$ is in $\SL(2,\ZZ)_{n}-\{I_{2}\}$ by definition
of $\Pn(\tau)$, $bc\neq0$---since $\Gamma$ is Schottky, its only
unipotent is $I_{2}$ and $\Gamma$ has no torsion---and that $\norm{\gamma_{B}}\llG2^{-(a+c)}\tau\inv$
by Lemma \ref{lem:normVsBoundaryLength} and (\ref{eq:Mp_low_bound}).
To simplify the notation we define $X=\frac{2^{-(a+c)}\tau\inv}{n}$.
Thus for any $\eps>0$ Lemma \ref{lem:countingSL2Z} yields
\begin{equation}
\#\Mc_{n}(\tau,a,c)\llG\consEpsCountingSL\left(\frac{X^{2+\eps}}{n}+X^{1+\eps}+X^{\eps}\right).\label{eq:sumpB3}
\end{equation}

To conclude we plug (\ref{eq:sumpB1}), (\ref{eq:sumpB2}) and (\ref{eq:sumpB3})
in (\ref{eq:sump1}):

\begin{eqnarray*}
\mathscr{S}_{n}^{*}(\tau,a,c) & \llG & \mathtt{K}_{\tau}^{3}2^{(a+c)\gexp}\tau^{\gexp}\consEpsCountingSL\left(\frac{X^{2+\eps}}{n}+X^{1+\eps}+X^{\eps}\right)\\
 & = & \consEpsCountingSL\tau^{\gexp}\mathtt{K}_{\tau}^{3}\left(2^{(a+c)(\gexp-2-\eps)}\frac{\tau^{-(2+\eps)}}{n^{3+\eps}}+\cdots\right.\\
 &  & \quad\cdots\left.+2^{(a+c)(\gexp-1-\eps)}\frac{\tau^{-(1+\eps)}}{n^{1+\eps}}+2^{(a+c)(\gexp-\eps)}\frac{\tau^{-\eps}}{n^{\eps}}\right)\\
 & \leq & \consEpsCountingSL\tau^{\gexp}\mathtt{K}_{\tau}^{3}\left(\frac{\tau^{-(2+\eps)}}{n^{3+\eps}}+\frac{\tau^{-(1+\eps)}}{n^{1+\eps}}+2^{(a+c)(\gexp-\eps)}\frac{\tau^{-\eps}}{n^{\eps}}\right).
\end{eqnarray*}
\end{proof}
We can now estimate $\#\Pc_{n}^{*}(\tau,a,c)$. 
\begin{proof}
[Proof of Lemma \ref{lem:size_PtauAC}] The result follows from the
inequality between the lower and upper bounds of $\mathscr{S}_{n}^{*}(\tau,a,c)$
given respectively by Lemma \ref{lem:sumpVsPtauAC} and Lemma \ref{lem:sumpUppBound}.
\end{proof}
To get the upper bound of $\#\Pc_{n}(\tau)$ we need to know for which
pairs $(a,c)\in\NN^{2}$ the set $\Pc_{n}^{*}(\tau,a,c)$ is nonempty.
The next lemma gives a necessary condition.

\begin{lem}
\label{lem:Cond_a_c}Let $\Gamma$ be Fuchsian Schottky subgroup of
$\SL(2,\ZZ)$. There is a constant $\mathtt{H}_{\Gamma}>0$ with the
next property: If $\mathcal{P}_{n}^{*}(\tau,a,c)$ is nonempty for
some natural numbers $a,c,n$ with $n\geq2$, and $\tau\in(0,1)$,
then

\begin{equation}
2^{a+c}\leq\mathtt{H}_{\Gamma}\frac{\tau\inv}{n}.\label{eq:cond_a_c}
\end{equation}
\end{lem}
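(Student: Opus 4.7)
The plan is to pass from any pair $(w_{1},w_{2})\in\Pc_{n}^{*}(\tau,a,c)$ to the single element $\gamma_{B}\in\Gamma_{n}\setminus\{I_{2}\}$, where $B=B_{1}\mirror{B_{2}}$, and to extract two conflicting size constraints on $\gamma_{B}$: an arithmetic lower bound $\norm{\gamma_{B}}\ggG n$ coming from $\gamma_{B}\equiv I_{2}\pmod n$, and a dynamical upper bound $\norm{\gamma_{B}}\asymG 2^{-(a+c)}\tau\inv$ coming from the geometry of $I_{B}$. Comparing them will immediately give (\ref{eq:cond_a_c}).

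First I would check that $B=B_{1}\mirror{B_{2}}$ is a reduced nonempty word, so that $\gamma_{B}$ is nontrivial in the free group $\Gamma$: absence of cancellation at the junction is exactly the condition $\End{B_{1}}\neq\End{B_{2}}$ from the canonical decomposition, and nonemptiness follows from $w_{1}\neq w_{2}$. Next I would verify $\gamma_{B}\in\Gamma_{n}$: a direct expansion gives
\[
\gamma_{w_{1}}\gamma_{w_{2}}\inv=\gamma_{A}\gamma_{B_{1}}\gamma_{C}\gamma_{C}\inv\gamma_{B_{2}}\inv\gamma_{A}\inv=\gamma_{A}\gamma_{B}\gamma_{A}\inv,
\]
which lies in $\Gamma_{n}$ by the definition of $\Pc_{n}(\tau)$. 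Since $\Gamma_{n}$ is the intersection of $\Gamma$ with the kernel of the homomorphism $\SL(2,\ZZ)\to\SL(2,\ZZ/n\ZZ)$, it is normal in $\SL(2,\ZZ)$, so conjugating back by $\gamma_{A}\inv$ yields $\gamma_{B}\in\Gamma_{n}$.

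For the arithmetic lower bound, the matrix $\gamma_{B}-I_{2}$ has all entries in $n\ZZ$ and at least one nonzero entry, so $\norm{\gamma_{B}-I_{2}}\geq n$, and the reverse triangle inequality gives $\norm{\gamma_{B}}\geq n-\sqrt{2}$, which is $\ggG n$ for $n\geq2$. For the dynamical upper bound I would combine Lemma \ref{lem:concatenationEstimate} and Lemma \ref{lem:mirrorEstimate} to get $\len B\asymG\len{B_{1}}\len{B_{2}}$, then use (\ref{eq:IAIBIC}) and the dyadic defining conditions $\len A\asymG 2^{-a}$, $\len C\asymG 2^{-c}$ of $\Pc_{n}^{*}(\tau,a,c)$ (which force $\len{B_{i}}\asymG 2^{a+c}\tau$) to conclude $\len B\asymG 2^{2(a+c)}\tau^{2}$. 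Lemma \ref{lem:normVsBoundaryLength} then turns this into $\norm{\gamma_{B}}\asymG 2^{-(a+c)}\tau\inv$.

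Equating the two estimates yields $n\llG 2^{-(a+c)}\tau\inv$, which is exactly (\ref{eq:cond_a_c}) after absorbing the constant into $\mathtt{H}_{\Gamma}$. The only delicate point I anticipate is the edge case in which one of $B_{1},B_{2}$ is empty and the condition $\End{B_{1}}\neq\End{B_{2}}$ becomes vacuous; but then the other $B_{j}$ is nonempty, (\ref{eq:IAIBIC}) forces $\len{B_{j}}\asymG 1$ so $a+c$ is bounded in terms of $\Gamma$ alone, and the inequality becomes trivial after enlarging $\mathtt{H}_{\Gamma}$. The rest of the argument is the straightforward interplay between the dynamical estimates of Subsection \ref{subsec:Dynamics-in-the-boundary} and the elementary lower bound on nonidentity principal congruence matrices.
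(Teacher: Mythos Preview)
Your argument is correct and follows the same route as the paper: compare the arithmetic lower bound $\norm{\gamma_{B}}\gg n$ with the dynamical estimate $\norm{\gamma_{B}}\asymG 2^{-(a+c)}\tau\inv$ obtained from Lemmas \ref{lem:normVsBoundaryLength}, \ref{lem:concatenationEstimate}, \ref{lem:mirrorEstimate} and (\ref{eq:IAIBIC}); your justification that $\gamma_{B}\in\Gamma_{n}$ via the conjugation identity is in fact more explicit than the paper's. One small correction to your edge-case remark: when one $B_{i}$ is empty it is not $a+c$ that is bounded---in fact $2^{a+c}\asymG\tau\inv$ then---but rather $n$, since $\len{B}\asymG 1$ forces $\norm{\gamma_{B}}\asymG 1$ while $\norm{\gamma_{B}}\gg n$; combining $n\llG 1$ with $2^{a+c}\asymG\tau\inv$ still yields (\ref{eq:cond_a_c}), and the paper simply omits this case.
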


\begin{proof}
Consider $(w_{1},w_{2})\in\Pc_{n}^{*}(\tau,a,c)$, the corresponding
$A,B_{1},B_{2},C$ and $B:=B_{1}\mirror{B_{2}}$. To establish the
inequality we will compare a lower and an upper bound of $\norm{\gamma_{B}}$.
Since $w_{1}\neq w_{2}$, then $\gamma_{B}\neq I_{2}$. We also know
that $\gamma_{B}\equiv I_{2}\pmod n$, so
\begin{equation}
\norm{\gamma_{B}}\geq\norm{\begin{pmatrix}1-n & 0\\
0 & 1
\end{pmatrix}}\gg n.\label{eq:QQ1}
\end{equation}
From (\ref{eq:IAIBIC}) and the definition of $\Pc_{n}^{*}(\tau,a,c)$
follows that

\[
\wlength{I_{B_{1}}}\asymG\frac{\tau}{\wlength{I_{A}}\cdot\wlength{I_{C}}}\asymG2^{a+c}\tau,
\]
and similarly for $\wlength{I_{B_{2}}}$. Applying Lemma \ref{lem:normVsBoundaryLength},
Lemma \ref{lem:concatenationEstimate} and Lemma \ref{lem:mirrorEstimate}
we get

\begin{equation}
\norm{\gamma_{B}}\asymG\wlength{I_{B}}^{-\frac{1}{2}}\asymG(\wlength{I_{B_{1}}}\cdot\wlength{I_{B_{2}}})^{-\frac{1}{2}}\asymG2^{-a-c}\tau\inv.\label{eq:QQ2}
\end{equation}
The result follows from (\ref{eq:QQ1}) and (\ref{eq:QQ2}).
\end{proof}
We prove now the main result of the section.
\begin{proof}
[Proof of Lemma \ref{lem:sizePtau}]Let $\Bc^{\Delta}(\tau)$ be the
diagonal of $\Bc(\tau)^{2}$. Then 
\[
\#\Pc_{n}(\tau)=\#\Pc_{n}^{*}(\tau)+\#\Bc^{\Delta}(\tau).
\]
Since $\#\Bc^{\Delta}(\tau)=\#\Bc(\tau)$ and $\tau\leq\maxWL{\Gamma}$,
Lemma \ref{lem:sizeBtau} gives 
\begin{equation}
\#\Bc^{\Delta}(\tau)\asymG\tau^{-\gexp}.\label{eq:sizeDiagonalBtau}
\end{equation}

Now we prove two intermediate inequalities to estimate $\#\Pc_{n}^{*}(\tau)$.
Recall the notation 
\[
\mathtt{K}_{\tau}:=\log(\tau\inv)+1,\quad\mathtt{L}_{\tau,n}:=\log\left(\frac{\tau\inv}{n}\right)+1.
\]
Consider $\mathtt{H}_{\Gamma}$ as in Lemma \ref{lem:Cond_a_c}, the
greatest integer $N$ such that 
\[
2^{N}\leq\mathtt{H}_{\Gamma}\frac{\tau\inv}{n},
\]
and 
\[
\mathcal{I}_{n}(\tau)=\{(a,c)\in\NN\mid a+c\leq N\}.
\]
Then $\Pc_{n}^{*}(\tau)$ is the disjoint union of the $\Pc_{n}^{*}(\tau,a,c)$
with $(a,c)\in\mathcal{I}_{n}(\tau)$ by Lemma \ref{eq:cond_a_c}.
If $(a,c)$ is in $\mathcal{I}_{n}(\tau)$, then $a$ and $c$ are
$\llG\mathtt{L}_{\tau,n}$. Thus

\begin{equation}
\#\mathcal{I}_{n}(\tau)\llG\mathtt{L}_{\tau,n}^{2}.\label{eq:size_I_n}
\end{equation}
Let us denote $\frac{\tau\inv}{n}$ by $X$. We also have

\begin{eqnarray}
\sum_{(a,c)\in\mathcal{I}_{n}(\tau)}2^{(\gexp-\eps)(a+c)} & = & \sum_{m=0}^{N}(m+1)2^{(\gexp-\eps)m}\nonumber \\
 & = & \frac{(N+1)2^{(\gexp-\eps)(N+2)}-(N+2)2^{(\gexp-\eps)(N+1)}+1}{(2^{\gexp-\eps}-1)^{2}}\nonumber \\
 & \llG & N2^{(\gexp-\eps)N}\nonumber \\
 & \llG & \mathtt{L}_{\tau,n}X^{\gexp-\eps}.\label{eq:sum_I_n}
\end{eqnarray}

We estimate the size of $\Pc_{n}^{*}(\tau)$ using Lemma \ref{lem:size_PtauAC},
(\ref{eq:size_I_n}) and (\ref{eq:sum_I_n}):

\begin{eqnarray}
\#\Pc_{n}^{*}(\tau) & = & \sum_{(a,c)\in\mathcal{I}_{n}(\tau)}\#\Pc_{n}^{*}(\tau,a,c)\nonumber \\
 & \llG & \consEpsCountingSL\mathtt{K}_{\tau}^{3}\left(\left[\frac{X^{2+\eps}}{n}+X^{1+\eps}\right]\#\mathcal{I}_{n}(\tau)+X^{\eps}\sum_{(a,c)\in\mathcal{I}_{n}(\tau)}2^{(a+c)(\gexp-\eps)}\right)\\
 & \llG & \consEpsCountingSL\mathtt{K}_{\tau}^{3}\mathtt{L}_{\tau,n}\left(\left[\frac{X^{2+\eps}}{n}+X^{1+\eps}\right]\mathtt{L}_{\tau,n}+X^{\gexp}\right).\label{eq:sizePtauStar}
\end{eqnarray}

The upper bound for $\#\Pc(\tau)$ follows from (\ref{eq:sizeDiagonalBtau})
and (\ref{eq:sizePtauStar}).
\end{proof}

\section{\label{sec:Main_proof}Proof of the main result}

Here we complete the proof of Theorem \ref{thm:MainThm}. Let $S=\Gamma\backslash\Hyp$
be a Schottky surface with $\Gamma$ contained in $\SL(2,\ZZ)$. Recall
that $J_{\beta}=[0,\beta(1-\beta)]$ and, for any integer $n\geq1$,
$m_{\Gamma}(n,J_{\beta})$ is the number of eigenvalues of $S_{n}=\Gamma_{n}\backslash\Hyp$
in $J_{\beta}$. This section is divided into three parts: \ref{subsec:upper_bound}
is devoted to the proof of Proposition \ref{prop:Multiplicity_bound},
an upper bound of $m_{\Gamma}(n,J_{\beta})$ when $\gexp>\frac{4}{5}$
and $\beta$ lies in $(t_{\Gamma},\gexp)$. Then we give in \ref{subsec:lower_bound}
a lower bound for the multiplicity of new eigenvalues of $S_{n}$
provided that all the prime factors of $n$ are big enough. Finally,
in \ref{subsec:main_proof} we combine these bounds to establish our
main result.

\subsection{The upper bound of $m_{\Gamma}(n,J_{\beta})$\label{subsec:upper_bound}}

\global\long\def\consChoiceAlphaMult#1#2{\mathtt{Z}(#1,#2)}%

We isolate the next computation from the proof of Proposition \ref{prop:Multiplicity_bound}
for the sake of clarity. Let $D_{\eps}$ be as in Lemma \ref{lem:countingSL2Z}
and, for any positive numbers $\delta$ and $\beta$, let 
\[
t_{\delta}:=\frac{\delta}{6}+\frac{2}{3},\quad\text{and}\quad\ell(\delta,\beta):=t_{\delta}+\frac{\beta-t_{\delta}}{4}.
\]

\begin{lem}
\label{lem:ChoiceAlpha}For any $\delta\in\left(\frac{4}{5},1\right]$
and any $\beta\in(t_{\delta},\delta]$ there are $\mathtt{Z}=\consChoiceAlphaMult{\delta}{\beta}>0,$
$\eps=\eps(\delta,\beta)\in(0,1)$ and $\alpha=\alpha(\delta,\beta)\in(2,5)$
with the next property: For any $\beta_{1}\in[\ell(\delta,\beta),\beta]$
and any $x>1$ we have 

\begin{equation}
x^{-2\alpha\beta_{1}}\left[D_{\eps}(\alpha\log x)^{5}\left(x^{\alpha(2+\eps)-3-\eps}+x^{(\alpha-1)(\eps+1)}\right)+x^{\text{\ensuremath{\alpha\delta}}}\right]\leq\mathtt{Z}x^{-2-\eps}.\label{eq:ChoiceAlpha}
\end{equation}
\end{lem}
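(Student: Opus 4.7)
The plan is to select $\alpha \in (2, 5)$ and $\eps \in (0, 1)$ so that every summand on the left-hand side of (\ref{eq:ChoiceAlpha}) is dominated by a constant multiple of $x^{-2-\eps}$ on $x > 1$. The logarithmic prefactor $(\alpha \log x)^{5}$ can be absorbed into a slightly worse exponent via $(\log x)^{5} \leq C_{\eta} x^{\eta}$ for arbitrary $\eta > 0$, so it suffices to ensure a small slack $\eta > 0$ in the two exponents that carry the log factor. Moreover, each of the three exponents appearing inside the brackets is strictly decreasing in $\beta_1$, so the worst case is $\beta_1 = \ell := \ell(\delta, \beta)$ and I may fix $\beta_1 = \ell$ throughout.

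Setting $\beta_1 = \ell$ and requiring each of the three exponents in $x$ to be $\leq -2 - \eps - \eta$ (or just $\leq -2-\eps$ for the log-free summand $x^{\alpha\delta}$), the three conditions become
\begin{align*}
\alpha\bigl(2(1-\ell) + \eps\bigr) &\leq 1 - \eta, \\
\alpha(2\ell - 1 - \eps) &\geq 1 + \eta, \\
\alpha(2\ell - \delta) &\geq 2 + \eps.
\end{align*}
Since $\delta \leq 1$ and $\ell > 1 - \delta/2$ (this follows from $\ell > t_\delta$ and $\delta > 1/2$), the third inequality already implies the second at $\eps = \eta = 0$, so it suffices to find $\alpha$ inside the open interval
\[
I := \Bigl(\tfrac{2}{2\ell - \delta},\ \tfrac{1}{2(1-\ell)}\Bigr).
\]

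The key arithmetic step is to verify that $I \neq \emptyset$ and $I \subset (2, 5)$. Non-emptiness, $1/(2(1-\ell)) > 2/(2\ell - \delta)$, is equivalent to $6\ell > 4 + \delta$, i.e., to $\ell > t_\delta$; this holds strictly by the definition $\ell = t_\delta + (\beta - t_\delta)/4$ combined with the hypothesis $\beta > t_\delta$. For $I \subset (2, 5)$, the bound $1/(2(1-\ell)) > 2$ reduces to $\ell > 3/4$, which holds because $\ell > t_\delta > t_{4/5} = 4/5$; and $2/(2\ell - \delta) < 5$ reduces to $\ell > \delta/2 + 1/5$, which follows from $\ell > t_\delta = \delta/6 + 2/3$ together with $\delta \leq 1 < 7/5$.

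I would then fix $\alpha$ strictly inside $I$ (for instance the midpoint) and choose $\eps, \eta > 0$ small enough that the three strict inequalities at $\eps = \eta = 0$ remain valid with the slack $\eta$. The explicit constant $\mathtt{Z} = \mathtt{Z}(\delta, \beta)$ is then extracted from the bound $(\alpha \log x)^{5} \leq C_\eta x^\eta$ and the sum of the three resulting terms. The only real obstacle is the arithmetic sensitivity of the endpoints of $I$: the particular shape $\ell = t_\delta + (\beta - t_\delta)/4$ is engineered so that the feasibility threshold $\ell > t_\delta$ \emph{exactly} coincides with the hypothesis $\beta > t_\delta$, while the assumption $\delta > 4/5$ is what guarantees $\ell > 3/4$, so that the window actually lies in $(2, 5)$.
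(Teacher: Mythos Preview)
Your argument is correct and follows essentially the same route as the paper's proof: reduce to the worst case $\beta_{1}=\ell$, absorb the logarithmic factor into a small power of $x$, and show that the resulting system of three exponent inequalities admits a solution $\alpha\in(2,5)$ precisely because $\ell>t_{\delta}$; the paper merely merges your $\eta$ into $\eps$ and chooses $\alpha$ and $\eps$ explicitly rather than by a continuity argument. One small slip: the checks you give (right endpoint $>2$, left endpoint $<5$) establish only $I\cap(2,5)\neq\emptyset$, not $I\subset(2,5)$ --- but that is all you need, and in fact $I\subset(2,5)$ does hold since $2\ell-\delta<1$ gives left endpoint $>2$ and $\ell\le 7/8$ gives right endpoint $\le 4$.
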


\begin{proof}
We fix $\delta,\beta,\beta_{1}$ and $x$ as in the statement. Now
consider any $\eps_{1}\in\left(0,\frac{3}{5}\right)$ and $\alpha_{1}\in(0,5)$.
Let $M(\alpha_{1},\eps_{1})$ be the left-hand side of (\ref{eq:ChoiceAlpha})
replacing $\alpha$ and $\eps$ respectively by $\alpha_{1}$ and
$\eps_{1}$. From the fact that $x>1$ and $\alpha_{1}<5$ we easily
get 

\[
(\alpha_{1}\log x)^{5}\text{\ensuremath{\ll_{\eps_{1}}x^{\eps_{1}}}},
\]
and thus

\begin{eqnarray}
M(\alpha_{1},\eps_{1}) & \ll_{\eps_{1}} & x^{-2\alpha_{1}\beta_{1}}\left[x^{\eps_{1}}\left(x^{\alpha_{1}(2+\eps_{1})-3-\eps_{1}}+x^{(\alpha_{1}-1)(\eps_{1}+1)}\right)+x^{\text{\ensuremath{\alpha_{1}\delta}}}\right]\nonumber \\
 & \text{=} & x^{\alpha_{1}(2+\eps_{1}-2\beta_{1})-3}+x^{-\alpha_{1}(2\beta_{1}-1-\eps_{1})-1}+x^{-\alpha_{1}(2\beta_{1}-\delta)}.\label{eq:X1}
\end{eqnarray}
We will show that the three exponents on the right-hand side of (\ref{eq:X1})
are $\leq-2-\eps_{1}$ if $\eps_{1}$ and $\alpha_{1}$ are well chosen.
Using that $\frac{4}{5}<t_{\delta}<\frac{5}{6},0<\eps_{1}<\frac{3}{5}$
and $\delta\leq1$ we readily see that 
\[
2+\eps_{1}-2\beta_{1},\quad2\beta_{1}-1-\eps_{1},\quad\text{and}\quad2\beta_{1}-\delta
\]
are strictly positive, so the exponents in (\ref{eq:X1}) are $\leq-2-\eps_{1}$
if and only if the next inequalities hold: 

\begin{eqnarray}
\alpha_{1} & \leq & \frac{1-\eps_{1}}{2+\eps_{1}-2\beta_{1}},\label{eq:X2.1}\\
\alpha_{1} & \geq & \frac{1+\eps_{1}}{2\beta_{1}-1-\eps_{1}},\label{eq:X2.2}\\
\alpha_{1} & \geq & \frac{2+\eps_{1}}{2\beta_{1}-\delta}.\label{eq:X2.3}
\end{eqnarray}
Take now $\eps_{1}\leq\frac{1}{15}$. Then 
\[
\frac{2+\eps_{1}}{2\beta_{1}-\delta}>\frac{1+\eps_{1}}{2\beta_{1}-1-\eps_{1}},
\]
and hence the system reduces to 

\[
\frac{2+\eps_{1}}{2\beta_{1}-\delta}\leq\alpha_{1}\leq\frac{1-\eps_{1}}{2+\eps_{1}-2\beta_{1}}.
\]
Since $\beta_{1}\geq\ell(\delta,\beta)$, then

\[
\frac{2+\eps_{1}}{2\beta_{1}-\delta}\leq\frac{2+\eps_{1}}{2\ell(\delta,\beta)-\delta},\quad\text{and}\quad\frac{1-\eps_{1}}{2+\eps_{1}-2\ell(\delta,\beta)}\leq\frac{1-\eps_{1}}{2+\eps_{1}-2\beta_{1}}.
\]
Reducing $\eps_{1}$ if necessary, one has
\begin{equation}
\frac{2+\eps_{1}}{2\ell(\delta,\beta)-\delta}\leq\frac{1-\eps_{1}}{2+\eps_{1}-2\ell(\delta,\beta)}.\label{eq:X3}
\end{equation}
Indeed, note that the denominators on (\ref{eq:X3}) are positive.
Multiplying by their product and simplifying we see that (\ref{eq:X3})
is equivalent to 
\[
\eps_{1}^{2}+(4-\delta)\eps_{1}\leq\frac{3}{2}(\beta-t_{\delta}).
\]

Let $r(\delta,\beta)$ be the positive root of $X^{2}+(4-\delta)X-\frac{3}{2}(\beta-t_{\delta})$.
The above reasoning shows that for 
\[
\eps=\min\left\{ r(\delta,\beta),\frac{1}{15}\right\} \quad\text{and}\quad\alpha=\frac{2+\eps}{2\ell(\delta,\beta)-\delta},
\]
one has $M(\alpha,\eps)\ll_{\eps}x^{-2-\eps}$. This can be reformulated
as (\ref{eq:ChoiceAlpha}) since $\eps$ depends only on $\delta$
and $\beta$. Clearly $\eps$ belongs to $(0,1)$. As for $\alpha$,
from the assumptions on $\delta$ and $\beta$ we easily get that
$t_{\delta}\in\left(\frac{4}{5},1\right)$ and $\ell(\delta,\beta)\in\left(\frac{4}{5},\delta\right)$.
A direct computation shows then that $\alpha$ is in the interval
$(2,5)$.
\end{proof}
We are ready to establish the upper bound of $m_{\Gamma}(n,J_{\beta})$.
\begin{proof}
[Proof of Proposition \ref{prop:Multiplicity_bound}]We define 
\[
\consMultiplicityp=\max\{\tauGamma,\maxWL{\Gamma}\}^{-\frac{1}{2}},
\]
where $\tauGamma$ and $\maxWL{\Gamma}$ are respectively as in Proposition
\ref{prop:PointwiseBoundZetaFunc} and (\ref{eq:def_max_boundary_length}).
Consider any $n>\consMultiplicityp$. 

The projection $\Gamma\to\SL(2,\ZZ/n\ZZ)$ identifies $\Gamma/\Gamma_{n}$
with a subgroup of $\SL(2,\ZZ/n\ZZ)$, so\footnote{For any prime $p$ and any integer $a\geq1$ we have---see \cite[eq. (7.1)]{bourgain-gamburd_expansion_2008}---$\#\SL(2,\ZZ/p^{a}\ZZ)=p^{3a-2}(p^{2}-1)<p^{3a}$.
For any integer $n\geq2$, its decomposition $p_{1}^{a_{1}}\cdots p_{k}^{a_{k}}$
as product of primes give an isomorphism between $\SL(2,\ZZ/n\ZZ)$
and the product of the $\SL(2,\ZZ/p_{j}^{a_{j}}\ZZ)$. It follows
that $\#\SL(2,\ZZ/n\ZZ)<n^{3}$.}

\begin{equation}
[\Gamma:\Gamma_{n}]\leq\#\SL(2,\ZZ/n\ZZ)<n^{3}.\label{eq:M-1}
\end{equation}
To lighten the notation we write $m(n,\beta)$ instead of $m_{\Gamma}(n,J_{\beta})$
for the number of eigenvalues of $\Gamma_{n}\backslash\Hyp$ in $J_{\beta}=[0,\beta(1-\beta)]$
counted with multiplicity. We label these as 

\[
\lambda_{n,1}\leq\cdots\leq\lambda_{n,m(n,\beta).}
\]
Consider $s_{n,j}\in\left(\frac{1}{2},\gexp\right]$ such that $\lambda_{n,j}=s_{n,j}(1-s_{n,j})$.

We fix $\eps=\eps(\gexp,\beta)\in(0,1)$ and $\alpha=\alpha(\gexp,\beta)\in(2,5)$
as in Lemma \ref{lem:ChoiceAlpha}. Let $\tau_{n}=n^{-\alpha}$. The
$s_{n,j}$ are zeros of $\zeta_{\tau_{n},n}$ by Proposition \ref{prop:eigenvaluesAreZerosRefZetaFunc}
since $\tau_{n}<n^{-2}<\maxWL{\Gamma}$. Hence $m(n,\beta)$ is less
or equal than the number of zeros of $\zeta_{\tau_{n},n}$ inside
any circle $\mathscr{C}$ containing $[\beta,\gexp]$, which we'll
bound with Jensen's Formula---for a convenient $\mathscr{C}$---.

We have $\tau_{n}<\tauGamma$, so by Proposition \ref{prop:PointwiseBoundZetaFunc}
there is $\consPointwiseBoundZetaFunc{\Gamma}$ such that

\begin{equation}
-\log|\zeta_{\tau_{n},n}(c')|\leq[\Gamma:\Gamma_{n}]\tau_{n},\label{eq:M0}
\end{equation}
for any $c'\geq\consPointwiseBoundZetaFunc{\Gamma}$. Recall that
$t_{\Gamma}=\frac{\gexp}{6}+\frac{2}{3}$ and $\ell(\gexp,\beta)=t_{\Gamma}+\frac{\beta-t_{\Gamma}}{4}$.
The zeta function $\zeta_{\tau_{n},n}$ is holomorphic and not identically
zero, so it has countably many zeros. Thus we can pick $\beta_{1}\in\left[\ell(\gexp,\beta),\frac{t_{\Gamma}+\beta}{2}\right)$
and $c\in[\consPointwiseBoundZetaFunc{\Gamma}+1,\consPointwiseBoundZetaFunc{\Gamma}+2]$
such that $\zeta_{\tau_{n},n}$ has no zeros in $\mathscr{C}\cup\{c\}$,
where $\mathscr{C}$ is the circle of radius $R:=c-\beta_{1}$ and
center $c$. Since the $s_{n,j}$'s lie in the interval $[\beta,c)$
we have 

\begin{equation}
m(n,\beta)\log\left(\frac{R}{c-\beta}\right)\leq\sum_{j=1}^{m(n,\beta)}\log\left(\frac{R}{c-s_{n,j}}\right).\label{eq:M1}
\end{equation}
Note that\footnote{Indeed, $\frac{R}{c-\beta}=1+\frac{\beta-\beta_{1}}{c-\beta}\geq1+\frac{\beta-t_{\Gamma}}{2(\consPointwiseBoundZetaFunc{\Gamma}+2)}$,
so we can take $A(\Gamma,\beta)=\log\inv\left(1+\frac{\beta-t_{\Gamma}}{2(\consPointwiseBoundZetaFunc{\Gamma}+2)}\right)$.} $\log\inv\left(\frac{R}{c-\beta}\right)\leq A(\Gamma,\beta)$ for
some positive constant $A(\Gamma,\beta)$. Thus, from (\ref{eq:M1})
and Jensen's Formula---Theorem \ref{thm:JensensFormula}---applied
to $\zeta_{\tau_{n},n}$ and $\mathscr{C}$ we obtain

\begin{equation}
m(n,\beta)\leq A(\Gamma,\beta)\left(\frac{1}{2\pi}\int_{0}^{2\pi}\log|\zeta_{\tau_{n},n}(Re^{i\theta}+c)|d\theta-\log|\zeta_{\tau_{n},n}(c)|\right).\label{eq:M1.5}
\end{equation}
Let us bound the right-hand side of (\ref{eq:M1.5}). From (\ref{eq:M-1})
and (\ref{eq:M0}) we obtain

\begin{equation}
-\log|\zeta_{\tau_{n},n}(c)|<n^{3-\alpha}.\label{eq:M2}
\end{equation}
For any $s\in\CC$ Lemma \ref{lem:logZetaVsHSnorm} gives 

\[
\log|\zeta_{\tau_{n},n}(s)|\leq\HSnorm{\Lc_{\Bc(\tau_{n}),s,\sigma_{n}}}^{2},
\]
since $\tau_{n}<\maxWL{\Gamma}$. Assume now that $s$ is in $\mathscr{C}$.
Using the upper bound for the Hilbert-Schmidt norm of $\Lc_{\Bc(\tau_{n}),s,\sigma_{n}}$,
the estimation of the size of $\Pc_{n}(\tau_{n})$---respectively
Lemma \ref{lem:HS_bound} and Corollary \ref{eq:sizePtauSimplified}---,
that $|\im s|\llG1$ and $\re s\geq\beta_{1}$, we get

\[
\log|\zeta_{\tau_{n},n}(s)|\llG n^{3}n^{-2\alpha\beta_{1}}\left[D_{\eps}(\alpha\log n)^{5}(n^{\alpha(2+\eps)-3-\eps}+n^{(\alpha-1)(1+\eps)})+n^{\alpha\gexp}\right].
\]
By our choice of $\eps=\eps(\gexp,\beta)$ and $\alpha=\alpha(\gexp,\beta)$,
Lemma \ref{lem:ChoiceAlpha} implies that

\[
\log|\zeta_{\tau_{n},n}(s)|\leq\mathtt{E}_{\Gamma,\beta}n^{1-\eps},
\]
for some $\mathtt{E}_{\Gamma,\beta}>0$. Integrating over $\mathscr{C}$
we get

\begin{equation}
\frac{1}{2\pi}\int_{0}^{2\pi}\log|\zeta_{\tau_{n},n}(Re^{i\theta}+c)|d\theta\leq\mathtt{E}_{\Gamma,\beta}n^{1-\eps}.\label{eq:M3}
\end{equation}

We are ready to conclude. Note that $\xi:=\max\{1-\eps,3-\alpha\}$
lies in $(0,1)$ since $\eps\in(0,1)$ and $\alpha>2$. Using (\ref{eq:M2})
and (\ref{eq:M3}) in (\ref{eq:M1.5}) we see that 

\begin{equation}
m(n,\beta)\leq\consMultiplicityCoeff{\beta}n^{\xi},\label{eq:M4}
\end{equation}
for some $\consMultiplicityCoeff{\beta}>0$ depending only on $\Gamma$
and $\beta$.
\end{proof}

\subsection{Lower bound for the multiplicity of new eigenvalues\label{subsec:lower_bound}}

Let $\Upsilon$ be a nonelementary, finitely generated subgroup of
$\SL(2,\ZZ)$ and let $S=\Upsilon\backslash\Hyp$. In this subsection
we establish a lower bound for the multiplicity of new eigenvalues---defined
below---of a congruence covering $S_{n}$ of $S$ using the representation
theory of $\Upsilon/\Upsilon_{n}$. This well-known strategy was popularized
by the work \cite{sarnak_bounds_1991} of Sarnak and Xue.

The quotient group $\Upsilon/\Upsilon_{n}$ acts on the left on $S_{n}=\Upsilon_{n}\backslash\Hyp$
by

\[
(\gamma\Upsilon_{n})\cdot\Upsilon_{n}x=\gamma\Upsilon_{n}x.
\]
The Laplace-Beltrami operator of $S_{n}$ commutes with $\Upsilon/\Upsilon_{n}\curvearrowright S_{n}$,
hence $\Upsilon/\Upsilon_{n}$ acts on any eigenspace of $\Delta_{S_{n}}$.
Since $\Upsilon$ is nonelementary, it is Zariski-dense in $\SLne(2)$.
A result of Matthews, Vaserstein and Weisfeiler \cite{matthews-vaserstein-weisfeiler_congruence_1984}
implies then that $\Upsilon/\Upsilon_{n}$ is isomorphic to $\SL(2,\ZZ/n\ZZ)$
for most $n$.

\global\long\def\consGammaFillsSL#1{N_{#1}}%

\begin{lem}
\label{prop:GammaFillsSL2p}Let $\Upsilon$ be a finitely generated,
nonelementary subgroup of $\SL(2,\ZZ)$. There is an integer $N_{\Upsilon}$
with the next property: for any $n$ relatively prime to $N_{\Upsilon}$,
the projection $\Upsilon\to\SL(2,\ZZ/n\ZZ)$ is surjective.
\end{lem}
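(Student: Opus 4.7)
The plan is to establish the lemma by applying the strong approximation theorem of Matthews, Vaserstein, and Weisfeiler. The argument has two parts: first I would verify Zariski-density of $\Upsilon$ in the algebraic group $\SLne(2)$, and then I would invoke the theorem directly to extract the integer $N_\Upsilon$.

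For Zariski-density, I would argue that a finitely generated nonelementary discrete subgroup $\Upsilon$ of $\SL(2,\RR)$ cannot be contained in any proper algebraic subgroup of $\SLne(2)$. Up to conjugacy, the proper algebraic subgroups of $\SLne(2)$ are tori, their normalizers, Borel subgroups, or finite subgroups, and each of these is virtually solvable or finite. The intersection of such a group with $\SL(2,\RR)$ is always elementary, since its limit set on $\partial_\infty \Hyp$ has at most two points (corresponding to the fixed points of a torus or the unique fixed point of a Borel). By contrast, a nonelementary $\Upsilon$ contains two hyperbolic elements with disjoint fixed-point pairs on $\partial_\infty \Hyp$, which rules out any of the above possibilities. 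Hence the Zariski closure of $\Upsilon$ in $\SLne(2)$ equals $\SLne(2)$.

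With Zariski-density in hand, I would apply the Matthews--Vaserstein--Weisfeiler theorem \cite{matthews-vaserstein-weisfeiler_congruence_1984}: since $\SLne(2)$ is connected, simply connected, and absolutely almost simple over $\QQ$, and $\Upsilon$ is a finitely generated Zariski-dense subgroup of $\SLne(2)(\ZZ) = \SL(2,\ZZ)$, there exists a positive integer $N_\Upsilon$ such that the closure of $\Upsilon$ in the profinite group $\SLne(2)(\widehat{\ZZ})$ is open and projects onto $\SL(2,\ZZ/n\ZZ)$ whenever $\gcd(n, N_\Upsilon) = 1$. This projection statement is precisely the conclusion of the lemma.

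The only genuine step is the Zariski-density verification, which is standard from the classification of algebraic subgroups of $\SL_2$; the remainder is a direct invocation of strong approximation. Implicitly, the MVW theorem absorbs the lifting from prime moduli to prime powers (a Frattini-type argument using perfectness of $\SL(2,\ZZ/p\ZZ)$ and irreducibility of the adjoint action for $p \geq 5$) and the assembly of coprime moduli via Goursat's lemma (using that $\PSL(2,\ZZ/p\ZZ)$ and $\PSL(2,\ZZ/q\ZZ)$ share no common simple quotients for distinct $p,q \geq 5$). The integer $N_\Upsilon$ produced must in particular be divisible by $2$ and $3$, where these structural facts about $\SL_2$ break down.
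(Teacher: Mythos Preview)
Your approach is exactly the one the paper takes: the paper does not give a formal proof of this lemma but simply remarks, just before stating it, that nonelementary implies Zariski-dense in $\SLne(2)$ and then invokes Matthews--Vaserstein--Weisfeiler. You have supplied the details the paper omits (the classification of proper algebraic subgroups of $\SLne(2)$ to justify Zariski-density), but the strategy is identical; one small caveat is that your final aside claiming $N_\Upsilon$ \emph{must} be divisible by $2$ and $3$ is not literally true for every $\Upsilon$ (take $\Upsilon=\SL(2,\ZZ)$), though this does not affect the proof.
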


Consider an eigenvalue $\lambda$ of $S_{n}$. We denote $E_{n}(\lambda)$
the $\lambda$-eigenspace of $\Delta_{S_{n}}$ and let $E_{d}^{n}(\lambda)$
be the lift of $E_{d}(\lambda)$ to $E_{n}(\lambda)$ whenever $d$
divides $n$. We say that $\lambda$ is an \emph{old eigenvalue }of
$S_{n}$ if and only if any $\varphi\in E_{n}(\lambda)$ is a sum
$\sum_{d}\varphi_{d}$, with $d$ running in the divisors of $n$
in $[1,n)$, and $\varphi_{d}\in E_{d}^{n}(\lambda)$. Otherwise $\lambda$
is a \emph{new eigenvalue. }Let us give an equivalent definition of
new eigenvalue. Express $E_{n}(\lambda)$ as orthogonal sum of irreducible,
$\Upsilon/\Upsilon_{n}$-invariant subspaces $\mathcal{H}_{1}\widehat{\oplus}\cdots\widehat{\oplus}\mathcal{H}_{k}$.
Note that $E_{d}^{n}(\lambda)$ is the subspace of vectors of $E_{n}(\lambda)$
fixed by the normal subgroup $\Upsilon_{d}/\Upsilon_{n}$ of $\Upsilon/\Upsilon_{n}$.
Hence each nonzero $E_{d}^{n}(\lambda)$ is a sum of $\mathcal{H}_{j}$'s.
Thus $\lambda$ is new if and only if $E_{n}(\lambda)$ contains an
irreducible representation of $\Upsilon/\Upsilon_{n}$ not factoring
through\footnote{Let $N$ be a normal subgroup of a group $G$ and let $(\sigma,V)$
be a representation of $G$. The space $V^{N}$ of $N$-invariant
vectors of $V$ is $G$-invariant. Hence either $V^{N}=0$ or $\sigma$
factors through $G/N$.} $\Upsilon/\Upsilon_{d}$ for any divisor $d\in[1,n)$ of $n$.

We denote by $\omega(n)$ the number of prime divisors of an integer
$n$.
\begin{lem}
\label{lem:bound_reps_SL2n}Let $n>1$ be an integer all of whose
prime divisors are $\geq5$. The dimension of an irreducible complex
representation of $\SL(2,\ZZ/n\ZZ)$ not factoring through $\SL(2,\ZZ/d\ZZ)$,
for any divisor $d\in[1,n)$ of $n$, is $>\frac{n}{3^{\omega(n)}}$.
\end{lem}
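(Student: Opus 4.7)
The plan is to reduce the statement to the case of prime powers via the Chinese Remainder Theorem, and then invoke the known minimum-dimension bound for irreducible representations of $\SL(2,\ZZ/p^a\ZZ)$.

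Writing $n=\prod_{i=1}^{k}p_{i}^{a_{i}}$, the Chinese Remainder Theorem yields an isomorphism $\SL(2,\ZZ/n\ZZ)\cong\prod_{i}\SL(2,\ZZ/p_{i}^{a_{i}}\ZZ)$, so every complex irreducible representation of $\SL(2,\ZZ/n\ZZ)$ is (uniquely) an external tensor product $\rho\cong\rho_{1}\boxtimes\cdots\boxtimes\rho_{k}$ with $\dim\rho=\prod_{i}\dim\rho_{i}$. Under this isomorphism, the kernel of reduction to a divisor $d=\prod_{i}p_{i}^{b_{i}}$ is the product of the coordinate kernels, so $\rho$ factors through $\SL(2,\ZZ/d\ZZ)$ iff each $\rho_{i}$ factors through $\SL(2,\ZZ/p_{i}^{b_{i}}\ZZ)$. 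One verifies that $\rho$ fails to factor through \emph{any} proper divisor of $n$ precisely when, for every $i$, $\rho_{i}$ does not factor through $\SL(2,\ZZ/p_{i}^{a_{i}-1}\ZZ)$ (with the convention $\SL(2,\ZZ/p^{0}\ZZ)=1$, so for $a_{i}=1$ this just means $\rho_{i}$ is nontrivial).

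Next I would prove the prime-power bound: for $p\geq5$ and $a\geq1$, the smallest dimension of a complex irreducible representation of $\SL(2,\ZZ/p^{a}\ZZ)$ not factoring through $\SL(2,\ZZ/p^{a-1}\ZZ)$ is $p^{a-1}(p-1)/2$. For $a=1$ this is the classical character table of $\SL(2,\mathbb{F}_{p})$: the irreducible degrees are $1$, $(p\pm1)/2$, $p-1$, $p$, $p+1$, so the smallest nontrivial degree is $(p-1)/2$. For $a\geq2$, the congruence kernel $K=\ker(\SL(2,\ZZ/p^{a}\ZZ)\to\SL(2,\ZZ/p^{a-1}\ZZ))$ is an elementary abelian $p$-group of order $p^{3}$, canonically identified with $\mathfrak{sl}_{2}(\mathbb{F}_{p})$ as an $\SL(2,\mathbb{F}_{p})$-module under the adjoint action; Clifford theory relative to $K$ expresses any such $\rho$ in terms of a nontrivial character $\chi$ of $K$, its orbit under the adjoint action, and a representation of the inertia subgroup extending $\chi$. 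Combining the orbit sizes computable from the classification of adjoint orbits in $\mathfrak{sl}_{2}(\mathbb{F}_{p})$ with induction down the congruence filtration gives the asserted lower bound; this is standard and I would either cite work of Shalika/Nobs/Jaikin-Zapirain on $\SL_{2}$ over finite local rings, or write the Clifford-theoretic argument in detail.

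To conclude, the elementary inequality $3(p-1)>2p$ for $p\geq5$ yields $p^{a-1}(p-1)/2 > p^{a}/3$, so multiplying over $i$ gives
\[
\dim\rho \;=\; \prod_{i=1}^{k}\dim\rho_{i} \;>\; \prod_{i=1}^{k}\frac{p_{i}^{a_{i}}}{3} \;=\; \frac{n}{3^{\omega(n)}},
\]
as required. The principal obstacle is the prime-power bound for $a\geq2$: the naive Clifford orbit bound alone only gives $(p^{2}-1)/2$, which becomes insufficient once $a\geq3$, so one must genuinely iterate through the congruence layers or extract the extra factor $p^{a-2}$ from the inertia representation to upgrade the orbit bound to $p^{a-1}(p-1)/2$.
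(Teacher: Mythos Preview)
Your argument is correct and follows essentially the same route as the paper: reduce via the Chinese Remainder Theorem to a tensor product of prime-power factors, observe that ``does not factor through any proper divisor'' forces each factor $\rho_i$ to be new at level $p_i^{a_i}$, invoke the minimal-dimension bound for such representations, and finish with the elementary inequality giving $>p^{a}/3$ for $p\geq 5$. The only differences are cosmetic: the paper states the prime-power lower bound as $\tfrac{1}{2}(p^{a}-p^{a-2})$ for $a\geq 2$ (slightly larger than your $\tfrac{1}{2}p^{a-1}(p-1)$, though either suffices) and simply cites Kowalski and Bourgain--Gamburd for it, whereas you sketch the Clifford-theoretic derivation and correctly flag that for $a\geq 3$ one must iterate through the congruence filtration rather than rely on a single orbit count.
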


\begin{proof}
Consider $n$ as in the statement and let $p_{1}^{a_{1}}\cdots p_{m}^{a_{m}}$
be its decomposition as product of primes. Since $\SL(2,\ZZ/n\ZZ)$
is isomorphic to 
\[
\prod_{j=1}^{m}\SL(2,\ZZ/p_{j}^{a_{j}}\ZZ),
\]
any irreducible representation $\sigma$ of $\SL(2,\ZZ/n\ZZ)$ is
a tensor product of irreducible representations $\sigma_{j}$ of the
$\SL(2,\ZZ/p_{j}^{a_{j}}\ZZ)$'s. For any $\sigma$ as in the statement,
the components $\sigma_{j}$ do not factor through $\SL(2,\ZZ/p_{j}^{a_{j}-1}\ZZ)$.
Then we know by \cite[Ex. 4.7.3, p. 245]{kowalski_introduction_2014}
and \cite[Lemma 7.1]{bourgain-gamburd_expansion_2008} that 

\[
\mathrm{dim}\,\sigma_{j}\geq\begin{cases}
\frac{1}{2}(p_{j}-1) & \text{when }a_{j}=1,\\
\frac{1}{2}(p_{j}^{a_{j}}-p_{j}^{a_{j}-2}) & \text{when }a_{j}\geq2.
\end{cases}
\]
In both cases $\mathrm{dim}\,\sigma_{j}>p^{a_{j}}/3$, so the result
follows.
\end{proof}
The lower bound for the multiplicity of new eigenvalues of $S_{n}$
follows immediately from lemmas \ref{prop:GammaFillsSL2p} and \ref{lem:bound_reps_SL2n}.
\begin{cor}
\label{prop:NewEigenvaluesHaveHighMultiplicity}Consider a finitely
generated, non-elementary subgroup $\Upsilon$ of $\SL(2,\ZZ)$ and
$S=\Upsilon\backslash\Hyp$. There is an integer $\consGammaFillsSL{\Upsilon}$
such that for any $n>1$ relatively prime to $\consGammaFillsSL{\Upsilon}$,
any new eigenvalue of $S_{n}$ has multiplicity $>\frac{n}{3^{\omega(n)}}$.
\end{cor}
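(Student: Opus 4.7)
The plan is to combine the two preceding lemmas essentially formally. I would set $\consGammaFillsSL{\Upsilon} := 6 N_{\Upsilon}$, where $N_{\Upsilon}$ is the integer furnished by Lemma \ref{prop:GammaFillsSL2p}. The factor of $6$ is inserted so that any $n > 1$ coprime to $\consGammaFillsSL{\Upsilon}$ has no prime divisors in $\{2,3\}$, hence all its prime divisors are $\geq 5$, as required by the hypothesis of Lemma \ref{lem:bound_reps_SL2n}. Simultaneously, coprimality with $N_{\Upsilon}$ guarantees via Lemma \ref{prop:GammaFillsSL2p} that the projection $\Upsilon \to \SL(2,\ZZ/n\ZZ)$ is surjective, inducing an isomorphism $\Upsilon/\Upsilon_n \cong \SL(2,\ZZ/n\ZZ)$ under which, for each divisor $d \mid n$, the subgroup $\Upsilon_d/\Upsilon_n$ corresponds to the kernel of the reduction map $\SL(2,\ZZ/n\ZZ) \to \SL(2,\ZZ/d\ZZ)$.

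Next I would invoke the characterization of new eigenvalues given just before Lemma \ref{lem:bound_reps_SL2n}: if $\lambda$ is a new eigenvalue of $S_n$, then $E_n(\lambda)$ contains an irreducible subrepresentation $\sigma$ of $\Upsilon/\Upsilon_n$ that does not factor through $\Upsilon/\Upsilon_d$ for any divisor $d \in [1,n)$ of $n$. Transporting this statement across the isomorphism above, $\sigma$ corresponds to an irreducible representation of $\SL(2,\ZZ/n\ZZ)$ that does not factor through $\SL(2,\ZZ/d\ZZ)$ for any proper divisor $d$ of $n$.

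Lemma \ref{lem:bound_reps_SL2n} then applies and gives $\dim \sigma > n/3^{\omega(n)}$. Since the multiplicity of $\lambda$ as an eigenvalue of $\Delta_{S_n}$ equals $\dim_{\CC} E_n(\lambda) \geq \dim \sigma$, the desired lower bound follows.

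There is no serious obstacle here: the only slightly subtle point is checking that the chain of subgroups $\Upsilon_n \subset \Upsilon_d \subset \Upsilon$ matches the chain of congruence kernels in $\SL(2,\ZZ/n\ZZ)$ under the Matthews–Vaserstein–Weisfeiler isomorphism, but this follows immediately from the definition $\Upsilon_m = \Upsilon \cap \ker(\SL(2,\ZZ) \to \SL(2,\ZZ/m\ZZ))$ together with surjectivity of $\Upsilon \to \SL(2,\ZZ/n\ZZ)$. Thus the proof amounts to a one-paragraph assembly of already established ingredients.
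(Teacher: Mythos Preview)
Your proposal is correct and follows exactly the route the paper takes: the corollary is stated to follow immediately from Lemmas \ref{prop:GammaFillsSL2p} and \ref{lem:bound_reps_SL2n}, and your write-up is precisely the natural unpacking of that sentence. Your explicit insertion of the factor $6$ to force all prime divisors to be $\geq 5$ is a detail the paper leaves implicit, but it is the right way to reconcile the hypotheses of the two lemmas.
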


\subsection{Proof of Theorem \ref{thm:MainThm}\label{subsec:main_proof}}

Let us now complete the proof of our main result.
\begin{proof}
[Proof of Theorem \ref{thm:MainThm}] We fix some $\beta\in(t_{\Gamma},\gexp)$.
Consider constants $\consMultiplicityp,\consMultiplicityCoeff{\beta}>0$
and $\xi=\xi(\gexp,\beta)\in(0,1)$ as in Proposition \ref{prop:Multiplicity_bound},
and $\consGammaFillsSL{\Gamma}$ as in Corollary \ref{prop:NewEigenvaluesHaveHighMultiplicity}.
Fix $C(\Gamma,\beta)\geq\max\{\consMultiplicityp,\consGammaFillsSL{\Gamma},5\}$
such that 
\begin{equation}
\frac{C(\Gamma,\beta)^{1-\xi}}{3}>\max\{1,\consMultiplicityCoeff{\beta}\}.\label{eq:FIN}
\end{equation}
Let $n$ be an integer with all its prime divisors $\geq C(\Gamma,\beta)$.
It suffices to show that all the eigenvalues $\lambda$ of $S_{n}$
in the interval $J_{\beta}=[0,\beta(1-\beta)]$ are old for any such
$n$\footnote{Indeed, let us show that this implies $E_{n}(\lambda)=E_{1}^{n}(\lambda)$
by induction on the number $\Omega(n)$ of prime divisors of $n$
counted with multiplicity: When $n$ is prime, this is the definition
of $\lambda$ being old. For the inductive step, the primes dividing
any divisor $d\in(1,n)$ of $n$ are still $\geq C(\Gamma,\beta)$
and $\Omega(d)<\Omega(n)$, so $E_{d}(\lambda)=E_{1}^{d}(\lambda)$.
Thus $E_{d}^{n}(\lambda)=E_{1}^{n}(\lambda)$, and since $\lambda$
is old,

\[
E_{n}(\lambda)=\sum_{\substack{d\mid n\\
d<n
}
}E_{d}^{n}(\lambda)=E_{1}^{n}(\lambda).
\]
}. 

We proceed by contradiction. Suppose $S_{n}$ has a new eigenvalue
$\lambda\in J_{\beta}$. Write $n$ as product of primes 
\[
n=p_{1}^{a_{1}}\cdots p_{k}^{a_{k}}.
\]
Since $n$ is relatively prime to $\consGammaFillsSL{\Gamma}$, then
\[
m_{\Gamma}(n,\lambda)>\frac{n}{3^{k}}
\]
by Corollary \ref{prop:NewEigenvaluesHaveHighMultiplicity}. Since
$\gexp>\frac{4}{5}$ and $n>\consMultiplicityp$, by Proposition \ref{prop:Multiplicity_bound}
we have 

\[
m_{\Gamma}(n,J_{\beta})\leq\consMultiplicityCoeff{\beta}n^{\xi}.
\]
Hence
\[
\consMultiplicityCoeff{\beta}>\frac{n^{1-\xi}}{3^{k}}\geq\frac{p_{1}^{1-\xi}}{3}\geq\frac{C(\Gamma,\beta)^{1-\xi}}{3},
\]
which contradicts (\ref{eq:FIN}).
\end{proof}
\bibliographystyle{amsalpha}
\bibliography{biblio}

\hspace{10pt} 

Irving Calderón

Department of Mathematical Sciences, 

Durham University, 

Lower Mountjoy, 

DH1 3LE Durham, 

United Kingdom

{\tt irving.d.calderon-camacho@durham.ac.uk}
\\

Michael Magee

Department of Mathematical Sciences, 

Durham University, 

Lower Mountjoy, 

DH1 3LE Durham, 

United Kingdom

{\tt michael.r.magee@durham.ac.uk}
\end{document}